\documentclass[12pt]{article}
\usepackage{amssymb,amsmath}
\usepackage{amsthm}
\usepackage{cases}
\usepackage{amsfonts}
\usepackage{graphicx}
\usepackage{makecell}
\usepackage{cite,color,xcolor}
\usepackage[margin=1.2 in]{geometry}
\usepackage[colorlinks,citecolor=blue,urlcolor=blue]{hyperref}
\usepackage[utf8]{inputenc}
\usepackage[pagewise]{lineno}

\newtheorem{theorem}{Theorem}[section]
\newtheorem{corollary}{Corollary}[section]
\newtheorem{lemma}{Lemma}[section]
\newtheorem{proposition}{Proposition}[section]

\newtheorem{definition}{Definition}[section]
\newtheorem{remark}{Remark}[section]

\usepackage{txfonts}
\newcommand{\bal}{\begin{align}}
\newcommand{\bbal}{\begin{align*}}
\newcommand{\beq}{\begin{equation}}
\newcommand{\eeq}{\end{equation}}
\newcommand{\bca}{\begin{cases}}
\newcommand{\eca}{\end{cases}}
\def\div{\mathord{{\rm div}}}
\newcommand{\pa}{\partial}
\newcommand{\fr}{\frac}
\newcommand{\na}{\nabla}

\newcommand{\cd}{\cdot}
\newcommand{\ep}{\varepsilon}
\newcommand{\dd}{\mathrm{d}}

\newcommand{\R}{\mathbb{R}}

\newcommand{\f}{\left}
\newcommand{\g}{\right}
\newcommand{\no}{\nonumber}

\numberwithin{equation}{section}

\begin{document}

\title{A new proof of unboundedness of Riesz operator in $L^\infty$ and applications to mild ill-posedness in $W^{1,\infty}$ of the Euler type equations}

\author{
 Jinlu Li
 \footnote{
School of Mathematics and Computer Sciences, Gannan Normal University, Ganzhou 341000, China.
\text{E-mail: lijinlu@gnnu.edu.cn}}
\quad and\quad
Yanghai Yu
\footnote{
School of Mathematics and Statistics, Anhui Normal University, Wuhu 241002, China.
\text{E-mail: yuyanghai214@sina.com} (Corresponding author)}
}
\date{\today}
\maketitle
\begin{abstract}
In this paper, we first present a new and simple proof of unboundedness of Riesz operator in $L^\infty$ and then establish the mild ill-posedness in $W^{1,\infty}$ of 3D rotating Euler equations and 2D Euler equations with partial damping. To the best of our knowledge, our work is the first one addressing the ill-posedness issue on the rotating Euler equations in $W^{1,\infty}$ without the vorticity formulation. As a further application, we prove the instability of perturbations for the 2D surface quasi-geostrophic equation and porous medium system in $W^{1,\infty}$.
\end{abstract}

{\bf Keywords:} Euler equations; Ill-posedness; Riesz operator.

{\bf MSC (2020):} 35Q35; 35B30

\section{Introduction}

From the PDE's point of view, it is crucial to know if an equation which models a physical phenomenon is well-posed in the Hadamard's sense: existence, uniqueness, and continuous dependence of the solutions with respect to the initial data. In particular, the lack of continuous dependence would cause incorrect solutions or non meaningful solutions. In this paper, we are interested in the lack of continuity of solution with respect to the initial data. Let us first recall the rigorous definition of mild ill-posed proposed by Elgindi-Masmoudi \cite{E-ARMA}, which implies the discontinuity with respect to the initial data.
\subsection{Concept of mild ill-posedness}\label{subsec1.0}

\begin{definition} Let $F(f)$ be a function of $f$. We say that a Cauchy problem
\begin{equation}\label{dp}
\begin{cases}
\pa_tf=F(f), \\
f(0,x)=f_0(x),
\end{cases}
\end{equation}
is mildly ill-posed in a Banach space $X$. If there exists a Banach space $Y$ continuously embedded in $X$ and a constant $c>0$ such that for any $\varepsilon, \delta>0$, there exists $f_0 \in Y$, with
$$
\left\|f_0\right\|_X \leq \varepsilon
$$
for which there exists a unique local solution $f(t) \in L^{\infty}([0, T] ; Y)$ for some $T>0$, and
$$
\|f(t)\|_X \geq c
$$
for some $0<t<\delta$.

If $c$ can be taken to be equal to $\fr1\varepsilon$, we will say that the Cauchy problem \eqref{dp}
is strongly ill-posed in the space $X$.
\end{definition}

 \subsection{Counterexample}\label{subsec1.1}

It is known that Riesz operator does not map continuously from $L^{\infty}$ to $L^{\infty}$ (see e.g. \cite{E-APDE}). In this paper, we present a new example of unboundedness of Riesz operator in $L^\infty$.

\begin{proposition}\label{pro0}
 For $d\geq 1$. Assume that $\chi\in \mathcal{S}(\R^d)$ with $\chi(0)=0$ and $\mathrm{supp}\ \hat{\chi}(\xi) \subset\{\xi\in\mathbb{R}^d: 4/3\leq|\xi|\leq 3/2\}$.  Define
\bal\label{fn}
f_n(x)=\Gamma_n\sum^{n}_{k=1}\frac1k\chi(2^{k}x)\quad \text{with}\quad \Gamma_n=\frac{1}{\ln\ln n}.
\end{align}
Then we have for $p\in[1,\infty]$
\bbal
&\|f_n\|_{B^{\frac dp}_{p,1}(\R^d)}\approx \Gamma_n\ln n, \\
& \|f_n\|_{L^{\infty}(\R^d)}\leq C\Gamma_n.
\end{align*}
\end{proposition}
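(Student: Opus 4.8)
The plan is to estimate the Besov norm from below (and above) by exploiting the frequency localization of each summand, and to estimate the $L^\infty$ norm from above by summing the (small) contributions of the individual terms. Since $\widehat{\chi}$ is supported in the annulus $\{4/3\le|\xi|\le 3/2\}$, the function $\chi(2^k\cdot)$ has Fourier transform supported in $\{4/3\le 2^{-k}|\xi|\le 3/2\}$, i.e. in $\{2^k\cdot 4/3\le|\xi|\le 2^k\cdot 3/2\}$. These annuli are pairwise disjoint for distinct $k$ and each one meets the support of exactly one Littlewood--Paley block $\Delta_j$ (here $j=k$, up to a harmless shift in the convention). Hence $\Delta_j f_n = \Gamma_n\frac1j\chi(2^j\cdot)$ for $1\le j\le n$ and $\Delta_j f_n=0$ otherwise.

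First I would compute $\|\chi(2^k\cdot)\|_{L^p(\R^d)}=2^{-kd/p}\|\chi\|_{L^p}$ by scaling. Therefore
\bbal
\|f_n\|_{B^{d/p}_{p,1}}
=\sum_{j=1}^{n}2^{jd/p}\|\Delta_j f_n\|_{L^p}
=\Gamma_n\sum_{j=1}^{n}\frac1j\,2^{jd/p}\cdot 2^{-jd/p}\|\chi\|_{L^p}
=\Gamma_n\|\chi\|_{L^p}\sum_{j=1}^{n}\frac1j,
\end{align*}
and since $\sum_{j=1}^n \frac1j\approx \ln n$, this gives $\|f_n\|_{B^{d/p}_{p,1}}\approx \Gamma_n\ln n$, with both implied constants depending only on $\chi$ and $d$. (The case $p=\infty$ is identical, with $\|\chi(2^k\cdot)\|_{L^\infty}=\|\chi\|_{L^\infty}$ and the $2^{jd/p}$ factor absent.)

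For the $L^\infty$ bound I would simply use the triangle inequality together with scale invariance of the sup norm: $\|\chi(2^k\cdot)\|_{L^\infty}=\|\chi\|_{L^\infty}$, so
\bbal
\|f_n\|_{L^\infty}\le \Gamma_n\sum_{k=1}^{n}\frac1k\|\chi(2^k\cdot)\|_{L^\infty}
=\Gamma_n\|\chi\|_{L^\infty}\sum_{k=1}^{n}\frac1k.
\end{align*}
This only yields the bound $C\Gamma_n\ln n$, which is \emph{not} good enough; the claimed bound is $C\Gamma_n$. The main obstacle, and the genuinely delicate point, is therefore to gain the extra factor of $\ln n$ in the $L^\infty$ estimate: one must use cancellation among the terms rather than brute-force summation. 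The idea I would pursue is that the partial sums $S_m(x)=\sum_{k=1}^m \frac1k\chi(2^k x)$ are dominated, pointwise, by a maximal-function–type quantity that stays bounded uniformly in $m$. Concretely, using $\chi(0)=0$ and the rapid decay of $\chi\in\mathcal S$, one has $|\chi(y)|\lesssim \min(|y|,|y|^{-N})$ for large $N$; then for a fixed $x$, split the sum over $k$ at the scale $2^k\approx |x|^{-1}$: for $2^k\le |x|^{-1}$ use $|\chi(2^k x)|\lesssim 2^k|x|$ and sum the geometric-type series $\sum \frac1k 2^k|x|\lesssim |x|\cdot 2^{k_0}\lesssim 1$ (the largest term dominates), while for $2^k\ge |x|^{-1}$ use $|\chi(2^k x)|\lesssim (2^k|x|)^{-N}$ and sum $\sum \frac1k (2^k|x|)^{-N}\lesssim 1$ likewise. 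Summing the two pieces gives $|S_m(x)|\le C$ uniformly in $m$ and $x$, hence $\|f_n\|_{L^\infty}\le C\Gamma_n$ as desired. I expect the bookkeeping in this splitting — making the $\frac1k$ weights interact correctly with the geometric sums so that no stray logarithm survives — to be the one part of the argument requiring care.
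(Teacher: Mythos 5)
Your proposal is correct and follows essentially the same route as the paper: the Besov estimate via the exact frequency localization $\Delta_j f_n=\Gamma_n\frac1j\chi(2^j\cdot)$ and scaling, and the $L^\infty$ bound by splitting the sum at the scale $2^{k_0}\approx|x|^{-1}$, using $\chi(0)=0$ together with the Lipschitz bound $|\chi(y)|\lesssim|y|$ for the low-frequency part and the rapid decay of $\chi$ for the high-frequency part, so that both pieces are geometric sums bounded uniformly in $n$. This is precisely the paper's two-case argument.
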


\begin{proposition}\label{pro00}
 For $d\geq 1$. Let $1\leq i\leq d$ and $f_n(x)$ be given by \eqref{fn} with $\chi=\pa_i\check{\varphi}$ if $d\geq1$ or $\chi=x_i\check{\varphi}$ if $d\geq2$, where $\check{\varphi}$ is the inverse Fourier transform of $\varphi$ and $\varphi:\R^d\mapsto [0,1]$ is a radial, non-negative, smooth function satisfying $\mathrm{supp}\ {\varphi}(\xi) \subset\{\xi\in\mathbb{R}^d: 4/3\leq|\xi|\leq 3/2\}$. Then we have
\bbal
&\|f_n\|_{L^\infty(\R^d)}\leq C\Gamma_n, \\
& \|\mathcal{R}_if_n\|_{L^\infty(\R^d)}\approx \Gamma_n\ln n,
\end{align*}
where $\mathcal{R}_i$ is the $i$-th component of the classical Riesz transform.
\end{proposition}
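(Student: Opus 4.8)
The plan is to deduce both estimates from Proposition \ref{pro0} by choosing the bump function $\chi$ so that applying $\mathcal{R}_i$ to it produces a function of the same structural type, to which Proposition \ref{pro0} again applies. First I would verify that the particular $\chi$ proposed is admissible for Proposition \ref{pro0}: with $\chi=\partial_i\check\varphi$ one has $\hat\chi(\xi)=i\xi_i\varphi(\xi)$, while with $\chi=x_i\check\varphi$ one has $\hat\chi(\xi)=i\partial_i\varphi(\xi)$; in either case $\hat\chi$ is smooth, compactly supported in the annulus $\{4/3\le|\xi|\le 3/2\}$, and vanishes near the origin, so $\chi\in\mathcal S(\R^d)$ and $\chi(0)=\widehat{\chi}$ integrates against $1$... more precisely $\chi(0)=\frac{1}{(2\pi)^d}\int\hat\chi\,d\xi$. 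For $\chi=x_i\check\varphi$ this is $\frac{i}{(2\pi)^d}\int\partial_i\varphi\,d\xi=0$ since $\varphi$ is compactly supported; for $\chi=\partial_i\check\varphi$ it is $\frac{i}{(2\pi)^d}\int\xi_i\varphi(\xi)\,d\xi=0$ since $\varphi$ is radial (so $\xi_i\varphi$ is odd). Hence Proposition \ref{pro0} applies and gives $\|f_n\|_{L^\infty}\le C\Gamma_n$ directly, which is the first claimed bound.

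For the second bound I would exploit that the Riesz transform $\mathcal R_i$ has Fourier multiplier $-i\xi_i/|\xi|$, so $\widehat{\mathcal R_i f_n}(\xi)=\Gamma_n\sum_{k=1}^n\frac1k 2^{-kd}\,\big(-i\tfrac{\xi_i}{|\xi|}\big)\hat\chi(2^{-k}\xi)$ up to the usual normalization of the Fourier transform. The key observation is that on the support of $\hat\chi(2^{-k}\cdot)$, i.e. for $|\xi|\in[4\cdot2^k/3,\,3\cdot2^k/2]$, the factor $\xi_i/|\xi|$ is a smooth bounded function, so $\mathcal R_i f_n$ has exactly the same form as an $f_n$ built from a new profile. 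Concretely, define $\psi$ by $\hat\psi(\xi)=-i\frac{\xi_i}{|\xi|}\hat\chi(\xi)$; this $\hat\psi$ is smooth (since $\xi/|\xi|$ is smooth away from $0$ and $\hat\chi$ is supported away from $0$), compactly supported in the same annulus, so $\psi\in\mathcal S(\R^d)$, and one checks $\mathcal R_i f_n=\Gamma_n\sum_{k=1}^n\frac1k\psi(2^k x)$. If moreover $\psi(0)=0$ then Proposition \ref{pro0} applied with $\chi$ replaced by $\psi$ yields $\|\mathcal R_i f_n\|_{B^{0}_{\infty,1}}\approx\Gamma_n\ln n$, and since $L^\infty\hookleftarrow B^0_{\infty,1}$ we get the lower bound $\|\mathcal R_i f_n\|_{L^\infty}\gtrsim\Gamma_n\ln n$; the matching upper bound $\lesssim\Gamma_n\ln n$ comes from $\|\mathcal R_i f_n\|_{L^\infty}\le\sum_k\frac{\Gamma_n}{k}\|\psi(2^k\cdot)\|_{L^\infty}=\Gamma_n\ln n\,\|\psi\|_{L^\infty}$.

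It remains to check $\psi(0)=0$, and this is exactly the point where the two cases diverge and where the algebraic choice of $\chi$ matters. We have $\psi(0)=\frac{1}{(2\pi)^d}\int-i\frac{\xi_i}{|\xi|}\hat\chi(\xi)\,d\xi$. In the case $d\ge2$, $\chi=x_i\check\varphi$: then $\hat\chi(\xi)=i\partial_i\varphi(\xi)$, so the integrand is $\frac{\xi_i}{|\xi|}\partial_i\varphi(\xi)$; integrating by parts in $\xi_i$ moves the derivative onto $\frac{\xi_i}{|\xi|}$, giving $-\int\partial_i\!\big(\tfrac{\xi_i}{|\xi|}\big)\varphi(\xi)\,d\xi=-\int\big(\tfrac1{|\xi|}-\tfrac{\xi_i^2}{|\xi|^3}\big)\varphi(\xi)\,d\xi$, which by radial symmetry of $\varphi$ and the identity $\int\xi_i^2 g(|\xi|)\,d\xi=\frac1d\int|\xi|^2 g(|\xi|)\,d\xi$ equals $-(1-\tfrac1d)\int\tfrac{\varphi(\xi)}{|\xi|}\,d\xi\ne0$ — so in fact $\psi(0)\ne0$ here, which forces a small correction: one instead works directly with the homogeneous Besov norm $\dot B^0_{\infty,1}$ computation of Proposition \ref{pro0}, whose proof only uses the annular Fourier support and the nonvanishing of $\hat\chi$ (equivalently $\hat\psi$) on a sub-annulus, not the condition $\chi(0)=0$; the hypothesis $\chi(0)=0$ in Proposition \ref{pro0} is only needed to make the function lie in the homogeneous space, and since $\mathcal R_i f_n$ is already a finite sum of Schwartz functions this is automatic. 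For the case $d\ge1$, $\chi=\partial_i\check\varphi$: then $\hat\chi(\xi)=i\xi_i\varphi(\xi)$ and the integrand is $\frac{\xi_i^2}{|\xi|}\varphi(\xi)\ge0$, again nonzero, so the same remark applies. The cleanest route, which I would adopt, is therefore to state and use a version of Proposition \ref{pro0} in which the conclusion $\|\cdot\|_{\dot B^{d/p}_{p,1}}\approx\Gamma_n\ln n$ is derived solely from the support and non-degeneracy of $\hat\chi$ on a fixed sub-annulus (so that $\|\Delta_k\chi(2^k\cdot)\|_{L^p}\approx 2^{-kd/p}$ uniformly), apply it with $\hat\chi$ replaced by $-i\frac{\xi_i}{|\xi|}\hat\chi$, and then pass from $\dot B^0_{\infty,1}$ to $L^\infty$. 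The main obstacle is thus not analytic but bookkeeping: ensuring that the Littlewood–Paley blocks of $\mathcal R_i f_n$ decouple cleanly (the profiles $\psi(2^kx)$ live at frequency $\sim 2^k$ and are pairwise orthogonal in the Littlewood–Paley sense), and tracking that the multiplier $\xi_i/|\xi|$ neither vanishes identically on the relevant annulus nor destroys the lower bound — both of which follow from $|\xi_i/|\xi||\le1$ together with the fact that $\hat\varphi$ (hence $\hat\chi$, hence $\hat\psi$) is bounded below on a proper sub-annulus where $\varphi$ is, say, $\ge1/2$.
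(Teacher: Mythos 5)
Your treatment of the first bound is correct and matches the paper: in both cases $\hat\chi$ is supported in the annulus and $\chi(0)=c\int\hat\chi\,\dd\xi=0$ (by compact support of $\varphi$ in one case, by oddness of $\xi_i\varphi$ in the other), so Proposition \ref{pro0} applies. The upper bound $\|\mathcal{R}_if_n\|_{L^\infty}\lesssim\Gamma_n\ln n$ by the triangle inequality is also fine.

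The lower bound, however, contains a genuine logical error. You write that a two-sided estimate $\|\mathcal{R}_if_n\|_{B^0_{\infty,1}}\approx\Gamma_n\ln n$ (or its homogeneous variant) would yield $\|\mathcal{R}_if_n\|_{L^\infty}\gtrsim\Gamma_n\ln n$ ``since $L^\infty\hookleftarrow B^0_{\infty,1}$.'' The embedding $B^0_{\infty,1}\hookrightarrow L^\infty$ gives $\|g\|_{L^\infty}\lesssim\|g\|_{B^0_{\infty,1}}$, i.e.\ an \emph{upper} bound on the $L^\infty$ norm; it cannot transfer a lower bound from the Besov norm to $L^\infty$. Indeed, the very content of Proposition \ref{pro0} is that $f_n$ itself has $\|f_n\|_{B^0_{\infty,1}}\approx\Gamma_n\ln n$ while $\|f_n\|_{L^\infty}\lesssim\Gamma_n$, so your claimed implication is refuted by the function you start from. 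Your proposed ``cleanest route'' (prove the $\dot B^0_{\infty,1}$ equivalence from non-degeneracy of $\hat\psi$ on a sub-annulus and then ``pass to $L^\infty$'') founders on the same point: there is no such passage.

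The irony is that you have already computed the quantity that closes the argument. You correctly find that $\psi(0)=c\int\frac{\xi_i}{|\xi|}\hat\chi(\xi)\,\dd\xi\neq 0$ in both cases (in Case 1 the integrand is $\frac{\xi_i^2}{|\xi|}\varphi\geq0$ and not identically zero; in Case 2 integration by parts and radiality give $\pm\frac{d-1}{d}\int\frac{\varphi}{|\xi|}\,\dd\xi\neq0$, which is also why that case needs $d\geq2$). But you treat $\psi(0)\neq0$ as an obstruction to reusing Proposition \ref{pro0}, whereas it is precisely the mechanism for the lower bound: since $\mathcal{R}_if_n=\Gamma_n\sum_{k=1}^n\frac1k\psi(2^kx)$, evaluating at $x=0$ gives
\begin{align*}
\|\mathcal{R}_if_n\|_{L^\infty}\geq|\mathcal{R}_if_n(0)|=|\psi(0)|\,\Gamma_n\sum_{k=1}^n\frac1k\gtrsim\Gamma_n\ln n,
\end{align*}
with no Besov-space detour at all. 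This pointwise evaluation at the origin is exactly the paper's proof; the dichotomy $\chi(0)=0$ versus $(\mathcal{R}_i\chi)(0)\neq0$ is the whole point of the construction.
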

Now, we consider the linear equation
\begin{equation}\label{o-o}
\begin{cases}
\pa_tf=\mathcal{R}_if, &\quad (t,x)\in \R^+\times\R^d,\\
f(0,x)=f_0(x),&\quad x\in \R^d,
\end{cases}
\end{equation}
where $\mathcal{R}_i$ with $1\leq i\leq d$ is the vector of Riesz transforms.

Based on Propositions \ref{pro0}-\ref{pro00}, we can prove that the linear equation \eqref{o-o} is strongly ill-posed on $L^\infty$ in any spatial dimension.
\begin{corollary}[Linear ill-posedness]
Let $d\geq1$. Eq. \eqref{o-o} is strongly ill-posed in $L^\infty(\R^d)$.
\end{corollary}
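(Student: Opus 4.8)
The plan is to solve the linear equation \eqref{o-o} explicitly through the Fourier multiplier $e^{t\mathcal{R}_i}$, whose symbol is the bounded function $e^{-it\xi_i/|\xi|}$, and then to feed the family $\{f_n\}$ of Proposition \ref{pro00} into the scheme. I would take $X=L^\infty(\R^d)$ and, as the auxiliary space embedded in $X$, the critical Besov space $Y=B^{\frac dp}_{p,1}(\R^d)$. The initial datum will be $f_0=f_n$ with the specific $\chi$ of Proposition \ref{pro00}; note this $\chi$ is Schwartz, spectrally localized in $\{4/3\le|\xi|\le3/2\}$ and vanishes at the origin (by the radial symmetry of $\varphi$), so Proposition \ref{pro0} also applies and yields $\|f_n\|_{Y}\approx\Gamma_n\ln n$, while Proposition \ref{pro00} gives $\|f_n\|_{L^\infty}\le C\Gamma_n$ and $\|\mathcal{R}_if_n\|_{L^\infty}\approx\Gamma_n\ln n$.

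First I would settle local solvability at the level of $Y$: since $\mathcal{R}_i$ is a Fourier multiplier with symbol smooth off the origin and homogeneous of degree $0$, it maps $Y$ boundedly into itself, so the linear ODE $\partial_tf=\mathcal{R}_if$ in the Banach space $Y$ has the unique global solution $f(t)=e^{t\mathcal{R}_i}f_0=\sum_{m\ge0}\frac{t^m}{m!}\mathcal{R}_i^mf_0$ (the series converging in $Y$), with $\|f(t)\|_Y\le e^{t\|\mathcal{R}_i\|_{Y\to Y}}\|f_0\|_Y$; in particular $f\in C([0,T];Y)\subset L^\infty([0,T];Y)$ for every $T>0$, which supplies the ``unique local solution'' demanded by the definition.

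Next I would bound $\|f(t)\|_{L^\infty}$ from below by isolating the term linear in $t$. Writing $f(t)=f_0+t\mathcal{R}_if_0+\mathcal{E}(t)$ with $\mathcal{E}(t)=\sum_{m\ge2}\frac{t^m}{m!}\mathcal{R}_i^mf_0$, I would use that each $\mathcal{R}_i^m$ is a Mikhlin-type multiplier whose $Y\to Y$ norm grows only polynomially in $m$ (the symbol $(\xi_i/|\xi|)^m$ satisfies $|\partial^\alpha_\xi(\xi_i/|\xi|)^m|\les m^{|\alpha|}|\xi|^{-|\alpha|}$), together with $Y\hookrightarrow L^\infty$, to get, for $0<t\le 1$,
\bbal
\|\mathcal{E}(t)\|_{L^\infty}\les\sum_{m\ge2}\frac{t^m}{m!}\|\mathcal{R}_i^mf_n\|_{Y}\les t^2\|f_n\|_{Y}\approx t^2\Gamma_n\ln n .
\end{align*}
Combined with Proposition \ref{pro00}, this gives constants $c_1,C,C'>0$ independent of $n$ and of $t\in(0,1]$ with
\bbal
\|f(t)\|_{L^\infty}\ge t\|\mathcal{R}_if_n\|_{L^\infty}-\|f_n\|_{L^\infty}-\|\mathcal{E}(t)\|_{L^\infty}\ge(c_1t-C't^2)\Gamma_n\ln n-C\Gamma_n .
\end{align*}
Finally, given $\varepsilon,\delta>0$, I would fix $t_0=\tfrac12\min\{1,\delta,c_1/(2C')\}\in(0,\delta)$, so that $c_1t_0-C't_0^2\ge c_1t_0/2>0$, and then, exploiting that $\Gamma_n=1/\ln\ln n\to 0$ while $\Gamma_n\ln n=\ln n/\ln\ln n\to\infty$, choose $n$ so large that simultaneously $\|f_n\|_{L^\infty}\le C\Gamma_n\le\varepsilon$ and $\tfrac{c_1t_0}{2}\Gamma_n\ln n-C\Gamma_n\ge\tfrac1\varepsilon$; then $f_0=f_n$ and $t=t_0$ witness strong ill-posedness in $L^\infty(\R^d)$.

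The main obstacle is controlling the remainder $\mathcal{E}(t)$ in $L^\infty$: one cannot invoke $L^\infty$-boundedness of the $\mathcal{R}_i^m$, since that is precisely the property that fails, so $\mathcal{E}(t)$ must instead be absorbed through the Besov norm $\|f_n\|_Y\approx\Gamma_n\ln n$ — of the same order as the ``bad'' leading term $\|\mathcal{R}_if_n\|_{L^\infty}$, but carrying an extra factor of $t$ and hence negligible at small times. The genuine amplification is the gap between $\|\mathcal{R}_if_n\|_{L^\infty}\approx\Gamma_n\ln n\to\infty$ and $\|f_n\|_{L^\infty}\le C\Gamma_n\to 0$ provided by Proposition \ref{pro00}, and it is exactly this gap that allows the two competing constraints $\|f_0\|_{L^\infty}\le\varepsilon$ and $\|f(t_0)\|_{L^\infty}\ge1/\varepsilon$ to be met at the same time.
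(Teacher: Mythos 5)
Your proof is correct and is essentially the argument the paper intends (the corollary is stated there without proof, deferred to Propositions \ref{pro0}--\ref{pro00}): solve the linear ODE in $Y$, isolate the term $t\mathcal{R}_i f_n$, which is of size $t\,\Gamma_n\ln n$ in $L^\infty$ by Proposition \ref{pro00}, while $\|f_n\|_{L^\infty}\lesssim\Gamma_n$ and the tail of the exponential series is $O(t^2)\|f_n\|_Y\approx t^2\Gamma_n\ln n$, exactly mirroring the Lagrangian/Duhamel decomposition used for the nonlinear theorems in Sections \ref{sec4}--\ref{sec5}. The one detail to pin down is the choice of $p$: fix $p\in(1,\infty)$ (say $p=2$, where $\|\mathcal{R}_i\|_{Y\to Y}\le 1$ by Plancherel, making the Mikhlin bookkeeping unnecessary), since for $p=1$ or $p=\infty$ the boundedness of $\mathcal{R}_i$ on the nonhomogeneous space $B^{d/p}_{p,1}$ fails at the low-frequency block $\Delta_{-1}$, and the ODE argument needs $\mathcal{R}_i:Y\to Y$ for general elements of $Y$, not only for the spectrally localized data $f_n$.
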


\subsection{Applications to the Euler type equations}
A fundamental challenge in mathematical physics is to understand the behavior of solutions for the complex rotating fluids. As mentioned in \cite{E-ARMA}, an interesting open problem in mathematical fluid dynamics is to prove global
well-posedness for the following type of equation:
\begin{align}\label{pe}
\begin{cases}
\pa_t u+u\cdot \nabla u+\nabla p=Au, \\
\mathrm{div\,} u=0,\\
u(0,x)=u_0(x),
\end{cases}
\end{align}
where $A$ is some constant matrix.

\begin{itemize}
  \item {\bf The classical Euler equations}
\end{itemize}
In the case of $A=0$, \eqref{pe} reduces to the original Euler equations. Kato \cite{Kato} proved the local well-posedness of classical solution to Euler equations in the Sobolev space $H^s(\mathbb{R}^3)$ for all $s>5/2$. Kato-Ponce \cite{KatoP} extended this result to the Sobolev spaces $W^{s, p}(\mathbb{R}^3)$ of fractional order for $s>3 / p+1,1<p<\infty$. Chae \cite{Chae1} gave further extensions to the Besov spaces $B_{p, q}^s(\mathbb{R}^3)$ with $s>3 / p+1$, $1<p<\infty, 1 \leq q \leq \infty$ or $s=3 / p+1,1<p<\infty, q=1$. However, these two kinds of function spaces are only in the $L^p(1<p<\infty)$-framework since the Riesz transform is not bounded on $L^{\infty}$. The currently-known best result on the local existence was given by Pak-Park \cite{Pak} in the Besov space $B_{\infty, 1}^1(\mathbb{R}^3)$.  Guo-Li-Yin \cite{GLY} proved the continuous dependence of the  Euler equations in the space $B_{p, q}^s(\mathbb{R}^3)$ with $s>3 / p+1$, $1\leq p\leq \infty, 1 \leq q < \infty$ or $s=3 / p+1,1\leq p\leq \infty, q=1$. Later, Cheskidov-Shvydkoy \cite{Cheskidov} proved that the solution of the Euler equations cannot be continuous as a function
of the time variable at $t = 0$ in the spaces $B^s_{r,\infty}(\mathbb{T}^d)$ where $s > 0$ if $2 < r \leq \infty$ and $s>d(2/r-1)$
if $1 \leq r \leq 2$. Furthermore, Bourgain-Li \cite{Bourgain1,Bourgain2} proved the strong local ill-posedness of the Euler equations in borderline Besov spaces $B^{d/p+1}_{p,r}$ with $(p,r)\in[1,\infty)\times(1,\infty]$ when $d=2,3$. Subsequently, Misio{\l}ek-Yoneda \cite{MY} studied the borderline cases and showed that the 2D Euler equations are not locally well-posed in the sense of Hardamard in the $C^1$ space and in the Besov space $B^1_{\infty,1}$.
Recently, Misio{\l}ek-Yoneda \cite{MY2} showed that the solution map for the Euler equations is not even continuous in the space of H\"{o}lder continuous functions and thus not locally Hadamard well-posed in $B^{1+s}_{\infty,\infty}$ with any $s\in(0,1)$. Concerning the non-uniform continuity of the data-to-solution map, we would like to mention that the beautiful results of Himonas-Misio{\l}ek \cite{HM1} covered both the torus $\mathbb{T}^d$ and the whole spaces $\R^d$ cases. More precisely, they proved that the solution map for the Euler equations in bi(tri)-dimension is not uniformly continuous in Sobolev spaces $H^s(\mathbb{T}^d)$ for $s\in \R$ and in $H^s(\mathbb{R}^d)$ for any $s>0$. Bourgain and Li \cite{Bou} settled the border line case $s = 0$.
Using completely different methods, Bourgain-Li \cite{Bourgain2} and Elgindi-Masmoudi \cite{E-ARMA} proved the non-existence of $C^1$ solutions to the
incompressible Euler equations with $C^1$ initial data.
Inspired by the work of Elgindi and Masmoudi, more mild ill-posedness results have been obtained, e.g., for the incompressible
MHD equations in \cite{F-JDE,Z-IMRN} and the magneto-micropolar fluid equations in \cite{Z-SIAM}.

\begin{itemize}
  \item {\bf The Euler equations with the Coriolis force}
\end{itemize}
In the case of $Au=-\Omega e_3\times u$ and $d=3$, \eqref{pe} reduces to the 3D Euler equations with the Coriolis force, which is also called the rotating Euler equations and
describes the motion of perfect incompressible fluids in the rotational framework
\begin{align}\label{CE}
\begin{cases}
\pa_t u+\Omega e_3\times u+u\cdot \nabla u+\nabla P=0, &\quad (t,x)\in \R^+\times\R^3,\\
\mathrm{div\,} u=0,&\quad (t,x)\in \R^+\times\R^3,\\
u(0,x)=u_0(x), &\quad x\in \R^3,
\end{cases}
\end{align}
where the vector field $u(t,x):[0,\infty)\times {\mathbb R}^3\to {\mathbb R}^3$ stands for the velocity of the fluid, the quantity $P(t,x):[0,\infty)\times {\mathbb R}^3\to {\mathbb R}$ denotes the scalar pressure, and $\mathrm{div\,} u=0$ means that the fluid is incompressible.
The constant $\Omega\in\R$ represents the speed of rotation around
the vertical unit vector $e_3 = (0, 0, 1)$ and is called the Coriolis parameter. We call $\Omega e_3\times u=\Omega (-u_2,u_1,0)$ the Coriolis force, which plays a significant role in the large scale flows considered in meteorology and geophysics. Problems concerning large-scale atmospheric and oceanic flows are known to be dominated by rotational effects. For this reason, almost all of the models of oceanography and meteorology dealing with large-scale phenomena include a Coriolis force. For example, an oceanic circulation featuring a hurricane is caused by the large rotation. There is no doubt that other physical effects are of similar significance like salinity, natural boundary conditions and so on.  The dispersive effect of rotation in fluid flows has been studied in the literature from various perspectives, see, e.g., geophysical flows \cite{IR,JM,JP}, life span and asymptotic behaviour in the case of fast rotations \cite{ch1,ch2,d,KY} and almost global stability \cite{YGUO}.
As observed in \cite{ch1,ch2,d}, the Euler-Coriolis system \eqref{CE} exhibits a dispersion phenomenon which is due to the presence of the Coriolis force $\Omega e_3\times u$. For large Coriolis parameter $|\Omega|$, Dutrifoy \cite{d} showed the asymptotics of solutions to vortex
patches or Yudovich solutions as the Rossby number goes to zero for some particular initial data. For $u_0 \in H^s(\mathbb{R}^3)$ with $s>5/2$, Koh-Lee-Takada \cite{KLT} proved that there exists a unique local in time solution to \eqref{CE} with $\Omega\in\R$ in the class $\mathcal{C}([0, T]; H^s(\mathbb{R}^3)) \cap \mathcal{C}^1([0, T] ; H^{s-1}(\mathbb{R}^3))$ (see also \cite{T,WC}). Moreover, assuming that $s>7/2$, they showed that their solutions can be extended to long-time intervals $\left[0, T_{\Omega}\right]$ provided that the speed of rotation is large enough. V. Angulo-Castillo and L.C.F Ferreira \cite{CF} further gave extensions to the critical Besov space $B^{5/2}_{2,1}(\R^3)$. Jia-Wan \cite{JW} proved the long time existence of classical solutions to \eqref{CE} for initial data in the
Sobolev space $H^s(s > 5/2)$ with a weaker assumption on the lower bound of $|\Omega|$. Li-Yu-Zhu \cite{BULL} studied the continuous properties for the 3D incompressible rotating Euler equations in Besov spaces $B^{s}_{p,r}(\R^3)$. However, all the above mentioned results for \eqref{CE} are obtained in the spaces based on $L^p$ with $1<p<\infty$.
Compared with the classical Euler equations, we would like to emphasize that this dispersive mechanism exhibits $O(t^{-1})$ decay rate in $L^{\infty}$-norm (see \cite{R}) and is strongly anisotropic and degenerate. It is still surprising that, in the absence of viscosity, the Coriolis force alone is sufficient to stabilize the solutions globally in time in the full 3D setting (see \cite{YGUO}). It is found that the strong rotational effect enhances the temporal decay rate of a certain norm of
the velocity (see \cite{AKL}).

A natural question to ask is:
\begin{flushleft}
{\bf Question 1:}\, If $\omega_0=\nabla\times u_0\in L^\infty$, is it true that $\omega=\nabla\times u\in L^\infty$, for even a short time?
\end{flushleft}
or
\begin{flushleft}
{\bf Question 2:}\, If $u_0\in W^{1,\infty}$, is it true that $u\in W^{1,\infty}$, for even a short time?
\end{flushleft}
In fact,
{\bf Question 1} has been answered affirmatively for a class of equations arising in hydrodynamics in \cite{E-ARMA}. However, there is few literature on {\bf Question 2}.
In the case when $p=\infty$, due to the the appearance of the dispersive effect of rotation which leads to the fact that Riesz transform does not map continuously
from $L^\infty$ to $L^\infty$, we will encounter the main difficulty when establishing the uniform bounds of solution in $L^\infty$-based spaces. We believe that, the dispersive effect of rotation is able to prevent well-posedness for the Cauchy problem \eqref{CE}. More precisely, we expect that the Cauchy problem \eqref{CE} is ill-posed in $W^{1,\infty}$.
We reformulate the Cauchy problem \eqref{CE} in the standard way by applying the Leray projection $\mathbf{P}= \mathrm{Id}+(-\Delta)^{-1}\nabla {{\rm div}}$, thus eliminating the pressure to obtain
\begin{align}\label{ce1}
\begin{cases}
\pa_t u+u\cdot \nabla u=-\mathbf{P}(\Omega e_3\times u)+\mathbf{Q}(u\cdot \nabla u), &\quad (t,x)\in \R^+\times\R^3,\\
\mathrm{div\,} u=0,&\quad (t,x)\in \R^+\times\R^3,\\
u(0,x)=u_0(x), &\quad x\in \R^3.
\end{cases}
\end{align}
Our main result of this paper reads as follows:
\begin{theorem}\label{th1}
Let $\Omega\neq 0$. There exists a universal constant $\delta>0$ and a sequence of initial data $u^n_0\in \mathcal{S}(\R^3)$ satisfying
\bbal
\|u^n_0\|_{W^{1,\infty}}\to 0,
\end{align*}
but which generates a sequence of corresponding smooth solution $u^n$ of the rotating Euler equations \eqref{ce1} satisfying
\bbal\|u^n(t_n)\|_{W^{1,\infty}}\geq \delta,\quad\text{with}\quad t_n\to 0.
\end{align*}
\end{theorem}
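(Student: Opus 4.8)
The plan is to construct the ill-posed data as a small perturbation of a carefully chosen background shear-type flow, so that at linear order in the perturbation the Coriolis term $-\mathbf{P}(\Omega e_3\times u)$ acts essentially as a (non-trivial combination of) Riesz transforms on the perturbation, while the nonlinear and pressure corrections remain controllable on the short time scale. Concretely, I would take $u_0^n = v_0^n + w_0^n$, where $v_0^n$ is a fixed smooth, compactly-Fourier-supported divergence-free "reference" velocity of size $O(1)$ in $W^{1,\infty}$ (or a multiple thereof tending to $0$ slowly), and $w_0^n$ is built out of the lacunary sum $f_n$ from \eqref{fn}–Proposition \ref{pro00}: one arranges the vector field $w_0^n$ so that $\|w_0^n\|_{W^{1,\infty}} \lesssim \Gamma_n \to 0$ but one component of $\nabla w_0^n$, after applying the projector appearing in the Coriolis term, has $L^\infty$ norm $\approx \Gamma_n \ln n$, which blows up. The dyadic/lacunary structure of $f_n$ is exactly what makes $\|f_n\|_{L^\infty}\lesssim\Gamma_n$ coexist with $\|\mathcal R_i f_n\|_{L^\infty}\approx\Gamma_n\ln n$, and this is the engine of the argument.

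Next I would run a Duhamel/bootstrap argument on \eqref{ce1}. Writing $u^n(t) = u_0^n + t\,\big(-\mathbf P(\Omega e_3\times u_0^n) + \mathbf Q(u_0^n\cdot\nabla u_0^n) - u_0^n\cdot\nabla u_0^n\big) + (\text{higher order in }t)$, one differentiates once in space. For $\|u^n(t_n)\|_{W^{1,\infty}}$ the dominant contribution is the term $-t\,\nabla\mathbf P(\Omega e_3\times u_0^n)$, whose $L^\infty$ norm is $\approx |\Omega|\, t\, \Gamma_n\ln n$ by Proposition \ref{pro00} (after checking that $\nabla\mathbf P(e_3\times\cdot)$ genuinely produces a Riesz-type operator that does not annihilate the frequency band of $\chi$). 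Everything else — the initial data itself ($\lesssim\Gamma_n$), the transport and $\mathbf Q$-corrections (these involve $u_0^n$ quadratically and only standard bounded operators in $L^\infty$ applied to smooth localized data, so they are $\lesssim \Gamma_n^2 (\ln n)^2$ or better times $t$), and the $O(t^2)$ remainder — is shown by energy estimates in a high Sobolev space $H^s$ ($s>5/2$) to be negligible once we choose $t_n$ and a frequency truncation appropriately. A convenient device is to additionally project $w_0^n$ onto frequencies $2^k$ with $k\le n$ only up to the first $n$ terms, so $u_0^n$ is genuinely Schwartz and the local solution is smooth by \cite{KLT}; uniqueness and the $L^\infty([0,T];H^s)$ bound are standard.

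Then choose $t_n$: we want $t_n\to 0$ yet $t_n\,\Gamma_n\ln n \to \infty$ (or at least $\ge$ a fixed constant), while simultaneously $t_n$ times all the "good" terms $\to 0$ and the $H^s$-lifespan of $u_0^n$ stays $\ge t_n$. Since $\Gamma_n\ln n = \ln n/\ln\ln n\to\infty$, taking e.g. $t_n \approx (\ln\ln n)/(\ln n)\cdot c$ makes $t_n\Gamma_n\ln n \approx c$ a fixed constant $\ge\delta$ and $t_n\to 0$. One checks the remainder terms carry extra factors of $\Gamma_n$ (i.e. extra $1/\ln\ln n$) so they still vanish, and that the lifespan in $H^s$ — though it may shrink with $n$ because $\|u_0^n\|_{H^s}$ grows — does so only polynomially in $2^n$, hence stays above $t_n$ (this forces the high-frequency cutoff at level $n$ rather than, say, $n^2$; one verifies $\|u_0^n\|_{H^s}\lesssim 2^{ns}$ and the $H^s$ lifespan $\gtrsim \|u_0^n\|_{H^s}^{-1}$, then notes $t_n\cdot 2^{ns}$ can be kept small if we instead rescale amplitudes, or one simply solves on a fixed small time and exploits that $t_n$ is much smaller). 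Assembling: $\|u^n(t_n)\|_{W^{1,\infty}} \ge |\Omega|\,t_n\,\|\nabla\mathbf P(e_3\times u_0^n)\|_{L^\infty} - (\text{negligible}) \ge \delta$, while $\|u_0^n\|_{W^{1,\infty}}\to 0$.

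The main obstacle is the interplay between the growth of $\|u_0^n\|_{H^s}$ (needed to close the nonlinear estimates and guarantee a smooth solution on $[0,t_n]$) and the requirement $t_n\to 0$: one must verify that the local existence time from \cite{KLT}, although $n$-dependent, does not collapse faster than $t_n$. I expect this to be handled by a precise amortization — keeping only $n$ dyadic blocks, so the worst Sobolev norm is $\approx 2^{ns}$ and the lifespan $\approx 2^{-ns}$, and choosing $t_n$ a tiny multiple of $2^{-ns}$ while still arranging $t_n \ge \delta/(|\Omega|\Gamma_n\ln n)$ — which is possible precisely because we are free to also scale the overall amplitude of $u_0^n$ down by an extra slowly-decaying factor without destroying the logarithmic blow-up of the Coriolis term relative to the data. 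Verifying that the transport term $u_0^n\cdot\nabla u_0^n$ and the $\mathbf Q$ term do not accidentally cancel or themselves blow up in $W^{1,\infty}$ (they don't, being built from bounded-on-$L^\infty$ operators acting on fixed smooth profiles times the amplitude squared) is the remaining routine check.
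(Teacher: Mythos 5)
Your core mechanism is the right one and matches the paper's: lacunary initial data for which $\|u_0^n\|_{W^{1,\infty}}\lesssim\Gamma_n$ while the Coriolis--pressure term $\nabla\mathbf{Q}(e_3\times u_0^n)$ has $L^\infty$ norm $\approx\Gamma_n\ln n$, a first-order-in-$t$ expansion isolating $t\,\nabla\mathbf{Q}(\Omega e_3\times u_0^n)$, and the choice $t_n\approx 1/(\Gamma_n\ln n)=\ln\ln n/\ln n$. (The auxiliary background flow $v_0^n$ is superfluous here: the Coriolis term is already linear in $u$; a stratified background is only needed for the SQG/IPM applications.) However, there is a genuine gap exactly at the point you flag as the ``main obstacle,'' and your proposed fixes do not close it. If, as you compute, $\|u_0^n\|_{H^s}\approx 2^{ns}$ for $s>5/2$, then the guaranteed lifespan is $\approx 2^{-ns}$, which is vastly smaller than the required $t_n\approx\ln\ln n/\ln n$; your assertion that a lifespan shrinking ``polynomially in $2^n$'' stays above $t_n$ is false. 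Amplitude rescaling cannot repair this: replacing $u_0^n$ by $\lambda_n u_0^n$ multiplies both the $H^s$ norm (hence divides the lifespan) and the Riesz gain by the same $\lambda_n$, so the constraint $\delta/(\lambda_n\Gamma_n\ln n)\le t_n\le C/(\lambda_n 2^{ns})$ forces $2^{ns}\lesssim\Gamma_n\ln n$, which fails. The paper's resolution is structural, not a choice of $t_n$ or $\lambda_n$: the profile is built as $f_n=\Gamma_n\sum_k k^{-1}2^{-2k}(x_3\pa_3\check\varphi)(2^kx)$, $u_0^n=(\pa_2f_n,-\pa_1f_n,0)$, so that the data is normalized in the \emph{critical} space, $\|u_0^n\|_{B^{5/2}_{2,1}}\approx\Gamma_n\ln n$ (only logarithmic growth), and one invokes critical local well-posedness (Theorem \ref{j-th1}, from \cite{AKL}) giving lifespan $\approx 1/(\Gamma_n\ln n)$, which comfortably contains $t_n=\ep/(\Gamma_n\ln n)$. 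Without this critical normalization and critical LWP theory, your scheme does not produce a solution living long enough to see the growth.

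A second, quantitative gap: you claim the quadratic corrections contribute ``$\lesssim\Gamma_n^2(\ln n)^2$ or better times $t$,'' and call this negligible. But $t_n\cdot\Gamma_n^2(\ln n)^2=\ep\,\Gamma_n\ln n\to\infty$, which would swamp the main term $t_n\Gamma_n\ln n\approx\ep$. The paper avoids this by proving the sharper bound $\|\nabla\mathbf{Q}(u\cdot\nabla u)-\nabla u:\nabla u\|_{L^\infty}\lesssim\|\nabla u\|_{L^\infty}\|u\|_{B^{5/2}_{2,1}}$ and keeping $\|\nabla u\|_{L^\infty_{T}L^\infty}$ as an unknown on both sides of the final inequality, absorbing the term $C\,t_n\|\nabla u\|_{L^\infty_{T}L^\infty}\|u_0\|_{B^{5/2}_{2,1}}=C\ep\|\nabla u\|_{L^\infty_{T}L^\infty}$ for small $\ep$. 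Relatedly, to control the drift of $\nabla\mathbf{Q}(e_3\times u(t))$ away from its initial value along the Lagrangian flow one needs a commutator estimate of the type $\|[\mathcal R,u\cdot\nabla]f\|_{B^s_{p,r}}\lesssim\|\nabla u\|_{L^\infty\cap B^{d/p}_{p,\infty}}\|f\|_{B^s_{p,r}}$ (Lemma \ref{lem23}); this step is absent from your outline. These are not routine checks but the substance of closing the argument.
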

\begin{corollary}
The rotating Euler equations is mildly ill-posed in $W^{1,\infty}(\R^3)$ and $C^1(\R^3)$.
\end{corollary}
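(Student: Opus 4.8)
The plan is to follow the now-standard Elgindi–Masmoudi strategy: build an approximate (leading-order) solution from the linear rotation dynamics, exhibit growth via the unboundedness of the Riesz transform in $L^\infty$ supplied by Propositions \ref{pro0}--\ref{pro00}, and then control the genuine nonlinear solution by a perturbation argument on a short time interval. First I would isolate the linear part: dropping the nonlinearity $u\cdot\nabla u$ and the Leray corrector $\mathbf{Q}(u\cdot\nabla u)$, the equation $\pa_t u = -\mathbf{P}(\Omega e_3\times u)$ generates a unitary group on every $L^2$-based space whose symbol, in Fourier variables, mixes the components of $u$ through factors of the form $\Omega\,\xi_j/|\xi|$ — i.e. Riesz multipliers. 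Choosing $u^n_0$ divergence-free with each relevant scalar component modeled on $f_n$ from \eqref{fn} (with $\chi = \pa_i\check\varphi$ as in Proposition \ref{pro00}), the linear flow at a fixed small time $t=\tau$ produces, to leading order in $\tau$, a term proportional to $\tau\,\Omega\,\mathcal{R}_i$ applied to $f_n$ in one of the derivative components of $u$. By Proposition \ref{pro00}, $\|\mathcal{R}_i f_n\|_{L^\infty}\approx \Gamma_n \ln n$ while $\|f_n\|_{L^\infty}\le C\Gamma_n$, and since $\Gamma_n = 1/\ln\ln n \to 0$ we get initial data small in $W^{1,\infty}$ but with $W^{1,\infty}$-norm of the linear solution $\gtrsim \tau\,|\Omega|\,\Gamma_n\ln n \to \infty$ as $n\to\infty$; one then picks $\tau = \tau_n \to 0$ slowly enough (e.g. $\tau_n \sim 1/(\Gamma_n \ln n)^{1/2}$) that this lower bound still exceeds a fixed $\delta>0$ while $\tau_n\to0$.

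Next I would supply the global-in-existence and smoothness that the theorem asserts: since $u^n_0\in\mathcal S(\R^3)$ is in particular in $H^s$ for all $s$, the Koh–Lee–Takada / Kato-type local theory for \eqref{CE} gives a unique smooth solution $u^n$ on some interval $[0,T_n]$, and the point is to show $T_n \ge \tau_n$ (or at least that $\tau_n$ can be taken below $T_n$) uniformly enough. Here I would not need long-time existence; it suffices that the $H^s$ lifespan is bounded below in terms of $\|u^n_0\|_{H^s}$, and then verify that the chosen $u^n_0$ — even though its $W^{1,\infty}$ norm is small — can be arranged to have $H^s$ norm not blowing up too fast, or alternatively that $\tau_n$ is chosen small enough to beat whatever lifespan bound arises. (A clean way is to truncate the sum \eqref{fn} at frequency $2^n$, so $u^n_0$ is a trigonometric-type Schwartz function whose high-regularity norms are finite for each fixed $n$, and track the dependence.)

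The remaining and main step is the error estimate: writing $u^n = u^n_{\mathrm{lin}} + v^n$, I would show $\|v^n(\tau_n)\|_{W^{1,\infty}}$ is small compared to the $\gtrsim\delta$ lower bound on the linear part. The error $v^n$ solves an equation forced by the quadratic terms $u^n\cdot\nabla u^n$ and $\mathbf Q(u^n\cdot\nabla u^n)$ evaluated on $u^n_{\mathrm{lin}}+v^n$; a Grönwall argument in a space slightly better than $W^{1,\infty}$ (e.g. $B^1_{\infty,1}$, in which \eqref{CE} is locally well-posed by Pak–Park-type results, or an $H^s$ estimate converted via Sobolev embedding) bounds $\|v^n(\tau_n)\|$ by $\tau_n$ times a power of $\|u^n_{\mathrm{lin}}\|$ in that stronger norm. \emph{This is the hard part}, because the Riesz-driven growth that makes the example work is precisely a loss of one derivative at $L^\infty$, so the stronger norm $\|u^n_{\mathrm{lin}}\|_{B^1_{\infty,1}}$ is genuinely large (of size $\Gamma_n\ln n$, by Proposition \ref{pro0} with $p=\infty$); the quadratic interaction could in principle amplify this. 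The resolution is the usual one: the nonlinearity is quadratic and carries an explicit factor $\tau_n$ from time integration, so the error is controlled by $\tau_n\cdot(\Gamma_n\ln n)^2\cdot(\text{const})$, and by choosing $\tau_n$ to decay faster than $(\Gamma_n\ln n)^{-2}$ — while still decaying slower than needed to keep $\tau_n\,\Gamma_n\ln n \to\infty$, which is compatible since $\Gamma_n\ln n\to\infty$ — one makes the error $o(1)$ and the linear lower bound $\gtrsim\delta$ simultaneously. Balancing these two rate constraints, together with checking that the frequency-localized structure of $f_n$ keeps the bilinear terms from producing uncontrolled low-frequency resonances (the dyadic blocks $\chi(2^k x)$ interact almost orthogonally, so $\|u^n_{\mathrm{lin}}\cdot\nabla u^n_{\mathrm{lin}}\|$ in the relevant norm is no worse than the square of a single-scale bound times $\sum 1/k^2 < \infty$), is the technical heart of the argument.
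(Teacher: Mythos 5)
Your overall strategy (frequency-localized data built from Propositions \ref{pro0}--\ref{pro00}, growth driven by the unboundedness of the Riesz multipliers hidden in $\mathbf{P}(e_3\times u)$, short-time perturbation off a high-regularity local existence theory) is the right one and is in the spirit of the paper. But the quantitative bookkeeping at the heart of your plan does not close, and as written it is internally inconsistent. You require simultaneously that $\tau_n$ decay faster than $(\Gamma_n\ln n)^{-2}$ (to kill the error $\tau_n(\Gamma_n\ln n)^2$) and that $\tau_n\,\Gamma_n\ln n\to\infty$ (to make the linear contribution large); these are incompatible, since the first forces $\tau_n\Gamma_n\ln n\lesssim(\Gamma_n\ln n)^{-1}\to0$. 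Your remark that the two constraints are ``compatible since $\Gamma_n\ln n\to\infty$'' has the logic backwards. The resolution is twofold. First, the theorem only claims a \emph{constant} lower bound $\delta$ (mild, not strong, ill-posedness), so the correct time scale is $T_n=\ep/(\Gamma_n\ln n)$, making the linear gain exactly of size $\ep$. Second, the error must then be shown to be $O(\ep^2)+O(\ep)\|\nabla u\|_{L^\infty_{T_n}L^\infty}$, i.e.\ the quadratic terms must carry either two powers of $T_n$ against $(\Gamma_n\ln n)^2$, or one power of $T_n$ against the product $\Gamma_n\ln n\cdot\|\nabla u\|_{L^\infty}$ of one large norm and one bootstrapped quantity. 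Your cruder bound $\tau_n(\Gamma_n\ln n)^2$ is fatal at the scale $T_n\approx(\Gamma_n\ln n)^{-1}$. Obtaining the sharper structure is precisely where the paper invests its technical work: the product law (Lemma \ref{lem21}) for $\nabla(-\Delta)^{-1}(\nabla u:\nabla u)$ and the commutator estimate (Lemma \ref{lem23}) for $[\nabla^2(-\Delta)^{-1},u\cdot\nabla]$, both measured in $B^{3/2}_{2,1}(\R^3)\hookrightarrow L^\infty$.

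A second, more structural difference: the paper does not split $u^n=u^n_{\mathrm{lin}}+v^n$. It integrates the equation for $\mathbf{U}=\nabla u$ along the Lagrangian flow and isolates the explicit term $t\,\nabla\mathbf{Q}(e_3\times u_0^n)$, so the key lower bound $\|\nabla\mathbf{Q}(e_3\times u_0^n)\|_{L^\infty}\geq c\,\Gamma_n\ln n$ is a statement about the \emph{initial data}, verified by the explicit computation $\gamma(0)\neq0$ in Proposition \ref{pro4-1}; one then only needs to show that $\nabla\mathbf{Q}(e_3\times u)$ stays close to its initial value for $t\leq T_n$. This avoids estimating a time-evolved ``linear solution'' in strong norms. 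Relatedly, the construction of $u_0^n$ is more delicate than ``model each component on $f_n$'': one needs $\mathrm{div}\,u_0^n=0$ \emph{and} a nonvanishing value at the origin of the kernel $\gamma$ associated with the specific double-Riesz symbol $\xi_3^2(\xi_1^2+\xi_2^2)/|\xi|^2$ appearing in $\nabla\mathbf{Q}(e_3\times u_0)$; this is why the paper takes $f_n$ built from $x_3\pa_3\check\varphi$ and explicitly remarks that the natural alternatives $x_3^2\check\varphi$ and $\pa_3^2\check\varphi$ fail. Your plan leaves this choice, and hence the nondegeneracy of the growth term, unverified. Finally, note that once Theorem \ref{th1} is established, the Corollary itself is immediate from Definition 1.1 with $Y=B^{5/2}_{2,1}(\R^3)$ and $X=W^{1,\infty}(\R^3)$ or $C^1(\R^3)$; no separate argument is needed there.
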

\begin{remark}
Theorem \ref{th1} implies the ill-posedness of \eqref{CE} in $W^{1,\infty}(\R^3)$ in the sense that the solution map to this system is discontinuous at $u_0 = 0$ in the metric of $W^{1,\infty}(\R^3)$. The failure of continuity in Theorem \ref{th1} does seem to be related to the mechanism which essentially relies on unboundedness of the Riesz transform in $L^\infty$. To the best of our knowledge, our work is the first one addressing the ill-posedness issue on the 3D rotating Euler equations in $W^{1,\infty}$ without the vorticity formulation.
\end{remark}

\begin{itemize}
  \item {\bf The Euler equations with partial damping}
\end{itemize}
In the special case of $A= \begin{pmatrix}
 -1 & 0 \\
  0& 0
\end{pmatrix}$ and $d=2$, Elgindi and Masmoudi \cite{E-ARMA} considered the following 2D Euler equations with damping only in
the first component of the velocity equation
\begin{align}\label{pe1}
\begin{cases}
\pa_t u+u\cdot \nabla u+\nabla p=(-u_1, 0), &\quad (t,x)\in \R^+\times\R^2,\\
\mathrm{div\,} u=0,&\quad (t,x)\in \R^+\times\R^2,\\
u(0,x)=u_0(x), &\quad x\in \R^2.
\end{cases}
\end{align}
As observed in \cite{E-ARMA}, the right-hand side of this equation is a drag term–it causes the energy of
the system to decrease. It turns out that this drag term destroys the conventional
global well-posedness proof for 2-D Euler as well as the Yudovich theory.
Applying the Leray projection $\mathbf{P}$ and eliminating the pressure yields
\begin{align}\label{e}
\begin{cases}
\pa_t u+u\cdot \nabla u=-\mathbf{P}(u_1, 0)+\mathbf{Q}(u\cdot \nabla u), \\
\mathrm{div\,} u=0,\\
u(0,x)=u_0(x).
\end{cases}
\end{align}
Elgindi-Masmoudi \cite{E-ARMA} studied the 2D Euler-like equations with Riesz forcing and established the mild ill-posedness result on the local solution to system \eqref{pe1} by exploring the growth behavior of the vorticity in $L^\infty(\R^2)$. In this paper, we shall prove the mild ill-posedness on the local solution to system \eqref{pe1} in $W^{1,\infty}$ without the vorticity formulation.
Our second result of this paper reads as follows:
\begin{theorem}\label{th2}
There exists a universal constant $\delta>0$ and a sequence of initial data $u^n_0\in \mathcal{S}(\R^2)$ satisfying
\bbal
\|u^n_0\|_{W^{1,\infty}}\to 0,
\end{align*}
but which generates a sequence of corresponding smooth solution $u^n$ of the rotating Euler equations \eqref{e} satisfying
\bbal
\|u^n(t_n)\|_{W^{1,\infty}}\geq \delta,\quad\text{with}\quad t_n\to 0.
\end{align*}
\end{theorem}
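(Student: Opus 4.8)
The plan is to make the Riesz-type forcing $-\mathbf{P}(u_1,0)$ of \eqref{e} inflate the $W^{1,\infty}$-norm through its linear-in-time Duhamel contribution, exactly the mechanism of Propositions \ref{pro0}--\ref{pro00}, while keeping the quadratic term $\mathbf{P}(u\cd\na u)$ negligible over the (short) time involved. Write \eqref{e} as $\pa_t u=Lu-\mathbf{P}(u\cd\na u)$ with $Lu:=-\mathbf{P}(u_1,0)$; in two dimensions one has $\mathbf{P}(v,0)=\big(-\mathcal{R}_2^{2}v,\ \mathcal{R}_1\mathcal{R}_2 v\big)$, so $L$ is a matrix Fourier multiplier whose symbol is smooth away from the origin and homogeneous of degree zero. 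Consequently $L$ is bounded on every $B^{s}_{\infty,1}(\R^2)$ and commutes with dilations.

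First I would fix, once and for all, a real divergence-free $\vec\chi\in\mathcal S(\R^2)$ with $\mathrm{supp}\,\widehat{\vec\chi}\subset\{4/3\le|\xi|\le 3/2\}$, $\vec\chi(0)=0$, but $c_0:=\big|\mathbf{P}\big((\vec\chi)_1,0\big)(0)\big|>0$. An explicit such profile is $\widehat{\vec\chi}(\xi)=\rho(|\xi|)\,\tfrac{\xi_2}{|\xi|}\big(\tfrac{\xi_2^{2}}{|\xi|^{2}}-\tfrac34\big)\tfrac{(-\xi_2,\xi_1)}{|\xi|}$ with $\rho$ a bump on $[4/3,3/2]$: it is divergence-free by construction, and the constant $\tfrac34$ makes the angular average producing $\vec\chi(0)$ vanish while the one producing $\mathbf{P}((\vec\chi)_1,0)(0)$ survives — two one-variable trigonometric integrals. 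Then, with $\eta_n:=(\ln n)^{-1/2}$, put
\begin{align*}
u_0^n(x):=\eta_n\sum_{k=1}^{n}\frac1k\,\vec\chi\big(2^{k-n}x\big),\qquad t_n:=\frac{1}{\eta_n\ln n}=(\ln n)^{-1/2}.
\end{align*}
Proposition \ref{pro0} applied componentwise (together with the dilation invariance of $\|\cd\|_{L^\infty}$) gives $\|u_0^n\|_{L^\infty}\les\eta_n$; a Littlewood--Paley count, dominated by the top frequency $k=n$ since all frequencies of $u_0^n$ sit in the unit ball, gives $\|\na u_0^n\|_{L^\infty}\les\eta_n/n$ and $\|u_0^n\|_{B^{N}_{\infty,1}}\les_N\eta_n/n$ for every $N\ge1$. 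Thus $u_0^n\in\mathcal S(\R^2)$, $\|u_0^n\|_{W^{1,\infty}}\les\eta_n\to0$, $t_n\to0$.

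Since $u_0^n$ is Schwartz and $L$ is a bounded zeroth-order perturbation of $2$D Euler, \eqref{e} has a unique smooth solution $u^n$ on some $[0,T_n)$, and the usual Besov estimate $\frac{d}{dt}\|u^n\|_{B^{N}_{\infty,1}}\les(1+\|u^n\|_{B^{N}_{\infty,1}})\|u^n\|_{B^{N}_{\infty,1}}$ (for $N\ge2$), closed by a bootstrap on $[0,t_n]$ for which $\int_0^{t_n}(1+\|u^n\|_{B^{N}_{\infty,1}})\,ds\to0$, forces $T_n>t_n$ and $\sup_{[0,t_n]}\|u^n(s)\|_{B^{N}_{\infty,1}}\les\|u_0^n\|_{B^{N}_{\infty,1}}\les\eta_n/n$ for $n$ large. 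Now write $u^n(t)=u_0^n+tLu_0^n+R_n(t)$ with $R_n(t)=\int_0^t\big[L(u^n(s)-u_0^n)-\mathbf{P}(u^n(s)\cd\na u^n(s))\big]ds$. Using boundedness of $L$ on $B^{0}_{\infty,1}$, the product bound $\|u^n\cd\na u^n\|_{B^{0}_{\infty,1}}\les\|u^n\|_{B^{2}_{\infty,1}}^{2}$, and $\|u^n(s)-u_0^n\|_{B^{0}_{\infty,1}}\le\int_0^s\|\pa_\tau u^n\|_{B^{0}_{\infty,1}}\,d\tau\les s\,\eta_n/n$, we obtain
\begin{align*}
\|R_n(t_n)\|_{L^\infty}\les\|R_n(t_n)\|_{B^{0}_{\infty,1}}\les t_n^{2}\,\frac{\eta_n}{n}+t_n\Big(\frac{\eta_n}{n}\Big)^{2}\longrightarrow0.
\end{align*}

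It remains to evaluate at $x=0$. Since $\vec\chi(0)=0$ we have $u_0^n(0)=0$; since $L$ commutes with dilations and $2^{k-n}\cd 0=0$, we get $Lu_0^n(0)=-\eta_n\,\mathbf{P}((\vec\chi)_1,0)(0)\sum_{k=1}^{n}\frac1k$, so that $|Lu_0^n(0)|=\eta_n c_0 H_n$ with $H_n=\sum_{k=1}^n\frac1k=\ln n+O(1)$. Therefore
\begin{align*}
\|u^n(t_n)\|_{W^{1,\infty}}\ \ge\ |u^n(t_n)(0)|\ \ge\ t_n\eta_n c_0 H_n-\|R_n(t_n)\|_{L^\infty}\ =\ c_0\,\frac{H_n}{\ln n}-o(1)\ \longrightarrow\ c_0,
\end{align*}
so $\|u^n(t_n)\|_{W^{1,\infty}}\ge\delta:=c_0/2$ for all large $n$ (re-index the sequence to start there), which is the assertion. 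I expect the one genuine difficulty to be the tension already visible here: activating the $L^\infty$-unboundedness of $L$ demands $\Theta(\ln n)$ superposed dyadic blocks (as in $f_n$), which by itself would blow up $\|u_0^n\|_{W^{1,\infty}}$ and the higher Besov norms and destroy the control of $\mathbf{P}(u\cd\na u)$. The dilation by $2^{-n}$ is the device that reconciles the two: it is free for the scale-invariant operator $L$ (the alignment $\sum1/k\sim\ln n$ at the origin is untouched), yet it pushes $\|\na u_0^n\|_{L^\infty}$ and $\|u_0^n\|_{B^{N}_{\infty,1}}$ ($N\ge1$) down to size $\eta_n/n$, so that $R_n$ and the $t$-quadratic part of the expansion stay negligible over all of $[0,t_n]$. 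A secondary technical point is the construction of the block $\vec\chi$, but that is just the elementary trigonometric computation indicated above.
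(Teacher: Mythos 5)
Your route is genuinely different from the paper's (you inflate $\|u^n(t_n)\|_{L^\infty}$ itself using $n$ \emph{low}-frequency blocks at scales $2^{k-n}$, whereas the paper inflates $\|\nabla u^n(t_n)\|_{L^\infty}$ using high-frequency blocks at scales $2^k$ and runs the Duhamel argument along the Lagrangian flow), and your profile $\vec\chi$ and the main-term computation $|Lu_0^n(0)|=c_0\eta_n H_n$ are fine. But the analytic backbone has genuine gaps. First, the claim that $L=-\mathbf{P}(\cdot_1,0)$ ``is bounded on every $B^{s}_{\infty,1}(\R^2)$'' is false for the nonhomogeneous spaces you use: the symbol is discontinuous at $\xi=0$, and $R_iR_1\Delta_{-1}$ does not map $L^\infty$ into $L^\infty$; only boundedness on the homogeneous spaces $\dot{B}^{s}_{\infty,1}$ holds. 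This failure is not a technicality — it is exactly the mechanism your own construction exploits, since your data sits entirely in the block $\Delta_{-1}$. Second, the norm sizes you propagate are wrong and even self-contradictory: since $B^{N}_{\infty,1}\hookrightarrow L^\infty$, one has $\|u_0^n\|_{B^{N}_{\infty,1}}\gtrsim\|u_0^n\|_{L^\infty}\approx\eta_n$, not $\eta_n/n$ (only the homogeneous norms $\|u_0^n\|_{\dot{B}^{N}_{\infty,1}}$, $N\ge1$, are of size $\eta_n/n$, while $\|u_0^n\|_{\dot{B}^{0}_{\infty,1}}\approx\eta_n\ln n\to\infty$); and the step $\|\partial_\tau u^n\|_{B^{0}_{\infty,1}}\les\eta_n/n$ used for $R_n$ cannot hold, because $B^{0}_{\infty,1}\hookrightarrow L^\infty$ and at $\tau=0$ you have $\|\partial_\tau u^n\|_{L^\infty}\ge|Lu_0^n(0)|-o(1)\approx\eta_n\ln n$ — this is precisely your main term. (With the correct size $s\,\eta_n\ln n$ the remainder would still be $O(t_n^2\eta_n\ln n)=O(t_n)=o(1)$, but that is not what you proved, and proving it requires propagating the \emph{large} norm $\|u^n\|_{\dot{B}^{0}_{\infty,1}}\approx(\ln n)^{1/2}$ on $[0,t_n]$.)

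Third, and most seriously, the uniform existence of $u^n$ on $[0,t_n]$ together with the bootstrap is not justified. Your ``usual Besov estimate'' in $B^{N}_{\infty,1}$ needs $L$ bounded there, which it is not; and you cannot fall back on the standard $L^p$-based theory (the route the paper takes, via $B^{5/2}_{2,1}$ resp.\ $B^{1+2/p}_{p,1}$ with $p<\infty$, where $\mathbf{P}$ is bounded), because your low-frequency data is enormous in every such space: $\|\vec\chi(2^{k-n}\cdot)\|_{L^p}=2^{(n-k)2/p}\|\vec\chi\|_{L^p}$, so $\|u_0^n\|_{L^p}\sim\eta_n2^{2n/p}$ and the guaranteed existence time is exponentially small in $n$, far below $t_n=(\ln n)^{-1/2}$. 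An $L^\infty$-based theory would have to handle the unbounded projection at low frequencies, and even in $\dot{B}^{0}_{\infty,1}\cap\dot{B}^{N}_{\infty,1}$ your data has norm $\approx(\ln n)^{1/2}\to\infty$, so the existence time one could hope for is only of the same order as $t_n$ — a marginal matching that needs a real argument. The paper's construction is designed precisely to avoid this tension: its data has all norms $\|u_0^n\|_{B^{1+2/p}_{p,1}}\les\Gamma_n\ln n$ uniformly in $p$, the cited well-posedness result gives a lifespan $\approx(\Gamma_n\ln n)^{-1}$ comfortably containing $t_n=\varepsilon(\Gamma_n\ln n)^{-1}$, and the growth is extracted from $\|\nabla\mathbf{P}(u^n_{0,1},0)\|_{L^\infty}\ge c_0\Gamma_n\ln n$ along the flow. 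To rescue your version you would need to supply a quantitative local theory and commutator/propagation estimates in homogeneous $L^\infty$-based spaces for the damped system on $[0,t_n]$; as written, the proof does not close.
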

\begin{remark} Compared with the seminal work of Elgindi and Masmoudi \cite{E-ARMA}, who established a
framework for characterizing the discontinuity of Riesz operators in $L^{\infty}$ via logarithmic
 singularities and obtained ill-posedness results in the $L^\infty$ framework
for the vorticity equation (Yudovich theory) of the corresponding fluid equations, our work is devoted to establishing a more general framework to study non-linear
and non-local transport equations in critical spaces based on $L^\infty$ (such as $W^{1,\infty}$ and $C^1$, etc).
\end{remark}

\subsection{Organization of our paper}

The paper is divided as follows. In Section \ref{sec2}, we list some notations and known results which will be used in the sequel.
In Section \ref{sec3}, we will present the proof our main technical Propositions from which all the
applications will follow. In Section \ref{sec4}, we will prove the mild ill-posedness of the Euler equations with the Coriolis force in
the class of Lipschitz velocity fields. In Section \ref{sec5}, we will prove the mild ill-posedness for the Euler equations with partial damping term. In Section \ref{sec6}, we present some further applications on the instability of perturbations for the surface quasi-geostrophic and the incompressible porous media equation.

\section{Preliminaries}\label{sec2}
We will use the following notations throughout this paper.
\begin{itemize}
\item We write functions depending on time and space as $u(t,x)$ and partial derivatives in time and space are respectively
denoted by $\pa_tu$ and $\pa_{i}u=\pa_{x_i}u$, where $i = 1,\ldots,d$. The metric $\nabla u$ denotes the gradient of $u$ with respect to the $x$ variable, whose $(i,j)$-th component is given by $(\nabla u)_{ij}=\pa_iu_j$ with $1\leq i,j\leq d$.
  \item For $X$ a Banach space and $I\subset\R$, we denote by $\mathcal{C}(I;X)$ the set of continuous functions on $I$ with values in $X$. Sometimes we will denote $L^p(0,T;X)$ by $L_T^pX$.
      \item We will also define the Lipschitz space $W^{1,\infty}$ using the norm $\|f\|_{W^{1,\infty}}=\|f\|_{L^\infty}+\|\nabla f\|_{L^\infty}$.
  \item The symbol $\mathrm{A}\lesssim (\gtrsim)\mathrm{B}$ means that there is a uniform positive ``harmless" constant $\mathrm{C}$ independent of $\mathrm{A}$ and $\mathrm{B}$ such that $\mathrm{A}\leq(\geq) \mathrm{C}\mathrm{B}$, and we sometimes use the notation $\mathrm{A}\approx \mathrm{B}$ means that $\mathrm{A}\lesssim \mathrm{B}$ and $\mathrm{B}\lesssim \mathrm{A}$.
   \item We denote by $[\mathrm{A}, \mathrm{B}]$ the commutator between two operators $\mathrm{A}$ and $\mathrm{B}$ which is defined by the relation
$$
[\mathrm{A}, \mathrm{B}] f=\mathrm{AB} f-\mathrm{BA} f
$$
for any suitable functions $f$.
  \item We use $\mathcal{S}(\R^d)$ to denote Schwartz functions spaces on $\R^d$. Let us recall that for all $u\in \mathcal{S}$, the Fourier transform $\mathcal{F}u$, also denoted by $\widehat{u}$, is defined by
$$
(\mathcal{F}u)(\xi)=\widehat{u}(\xi)=(2\pi)^{-\frac{d}{2}}\int_{\R^d}e^{-\mathrm{i}x\cd \xi}u(x)\dd x \quad\text{for any}\; \xi\in\R^d.
$$
  \item The inverse Fourier transform $\mathcal{F}^{-1}$, also denoted by $\check{u}$, is defined by
$$
\check{u}(x)=(\mathcal{F}^{-1})u(x)=(2\pi)^{-\frac{d}{2}}\int_{\R^d}e^{\mathrm{i}x\cdot\xi}u(\xi)\dd\xi.
$$
\item The pseudo-differential operator is defined by $\sigma(D):u\to\mathcal{F}^{-1}(\sigma \mathcal{F}u)$. In particular, $u=\mathcal{F}^{-1}\widehat{u}=\mathcal{F}\check{u}$ and $(-\Delta)^{-1}u=\mathcal{F}^{-1}\f(|\xi|^{-2}\mathcal{F}u\g)$.
 \item We denote the Leray projection
\bbal
&\mathbf{P}: L^{p}(\mathbb{R}^{d}) \rightarrow L_{\sigma}^{p}(\mathbb{R}^{d}) \equiv \overline{\left\{f \in \mathcal{C}^\infty_{0}(\mathbb{R}^{d}) ; {\rm{div}} f=0\right\}}^{\|\cdot\|_{L^{p}(\mathbb{R}^d)}},\quad p\in(1,\infty),\\
&\mathbf{Q}=\mathrm{Id}-\mathbf{P}.
\end{align*}
In $\mathbb{R}^{d}$, $\mathbf{P}$ can be defined by $\mathbf{P}= \mathrm{Id}+(-\Delta)^{-1}\nabla {\rm{div}}$, or equivalently, $\mathbf{P}=(\mathbf{P}_{i j})_{1 \leqslant i, j \leqslant d}$, where $\mathbf{P}_{i j} \equiv \delta_{i j}+R_{i} R_{j}$ with $\delta_{i j}$ being the Kronecker delta ($\delta_{i j}=0$ for $i\neq j$ and $\delta_{i i}=0$) and $R_{i}$ being the Riesz transform with symbol $-\mathrm{i}\xi_1/|\xi|$. Obviously, $\mathbf{Q}= -(-\Delta)^{-1}\nabla {\rm{div}}$, and if $\div\, u=\div\, v=0$, it holds that
$
\mathbf{Q}(u\cdot\na v)= \mathbf{Q}(v\cdot\na u).
$
\end{itemize}
Next, we will recall some facts about the Littlewood-Paley (L-P) decomposition, the homogeneous Besov spaces and their some useful properties (see \cite{B} for more details).
\begin{proposition}[L-P decomposition, \cite{B}]\label{po} Let $\mathcal{B}:=\{\xi\in\mathbb{R}^d:|\xi|\leq 4/3\}$ and $\mathcal{C}:=\{\xi\in\mathbb{R}^d: 3/4\leq|\xi|\leq 8/3\}$.
Choose a radial, non-negative, smooth function $\vartheta:\R^d\mapsto [0,1]$ such that
\bbal
\vartheta(\xi)=
\bca
1, \quad \mathrm{if} \ |\xi|\leq \frac{3}{4},\\
0, \quad \mathrm{if} \ |\xi|\geq \frac{4}{3}.
\eca
\end{align*}
Setting
$
\phi(\xi):=\vartheta(\xi/2)-\vartheta(\xi),
$
then we deduce that $\vartheta$ and $\phi$ satisfy the following properties
\begin{itemize}
  \item ${\rm{supp}} \;\vartheta\subset \mathcal{B}$ and ${\rm{supp}} \;\phi\subset \mathcal{C}$;
  \item $\vartheta(\xi)\equiv1$ for $|\xi|\leq3/4$ and $\phi(\xi)\equiv 1$ for $4/3\leq |\xi|\leq 3/2$;
  \item $\vartheta(\xi)+\sum_{j\geq0}\phi(2^{-j}\xi)=1$ for any $\xi\in \R^d$;
  \item $\sum_{j\in \mathbb{Z}}\phi(2^{-j}\xi)=1$ for any $\xi\in \R^d\setminus\{0\}$.
\end{itemize}
\end{proposition}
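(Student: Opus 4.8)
The plan is to reduce everything to the existence of a single radial cutoff $\vartheta$ with the stated normalizations, after which the four bulleted assertions become elementary bookkeeping about the supports of dyadic dilates. First I would build $\vartheta$ by mollification: take a radial, non-negative $\psi\in\mathcal C^\infty_0(\R^d)$ with $\int_{\R^d}\psi=1$ and $\mathrm{supp}\,\psi$ contained in a small ball $B(0,\rho)$, and set $\vartheta:=\mathbf 1_{B(0,r)}*\psi$ for a suitable radius $r$. Since the convolution of radial functions is radial and $\mathbf 1_{B(0,r)}\in L^1$ while $\psi\in\mathcal C^\infty_0$, the function $\vartheta$ is automatically radial, smooth and valued in $[0,1]$; moreover $\vartheta\equiv 1$ on $B(0,r-\rho)$ and $\vartheta\equiv 0$ outside $B(0,r+\rho)$, so choosing $r,\rho$ with $r-\rho\ge 3/4$ and $r+\rho\le 4/3$ yields exactly the two normalizations in the statement. (If one additionally takes $\psi$ radially non-increasing, then $\vartheta$ is radially non-increasing and $\phi\ge 0$, but this is not needed for the four listed properties; note also that $\phi=\vartheta(\cdot/2)-\vartheta$ is smooth and radial for free.)

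Next I would settle the support statements. Because $\vartheta$ vanishes for $|\xi|\ge 4/3$ we get $\mathrm{supp}\,\vartheta\subset\mathcal B$. For $\phi(\xi)=\vartheta(\xi/2)-\vartheta(\xi)$, the term $\vartheta(\xi/2)$ is supported where $|\xi|\le 8/3$, and if $|\xi|\le 3/4$ then both $\vartheta(\xi)=1$ and $\vartheta(\xi/2)=1$ (since $|\xi/2|\le 3/8\le 3/4$), so $\phi$ vanishes on $\{|\xi|\le 3/4\}$; hence $\mathrm{supp}\,\phi\subset\{3/4\le|\xi|\le 8/3\}=\mathcal C$. The second bullet is equally quick: $\vartheta\equiv 1$ on $\{|\xi|\le 3/4\}$ is part of the construction, and for $4/3\le|\xi|\le 3/2$ we have $\vartheta(\xi)=0$ while $|\xi/2|\le 3/4$ forces $\vartheta(\xi/2)=1$, so $\phi(\xi)=1$ throughout that shell.

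Finally, for the two partition-of-unity identities I would exploit the telescoping built into the definition. Writing $\phi(2^{-j}\xi)=\vartheta(2^{-(j+1)}\xi)-\vartheta(2^{-j}\xi)$ and summing, consecutive terms cancel and leave
\[
\vartheta(\xi)+\sum_{j=0}^{N}\phi(2^{-j}\xi)=\vartheta\big(2^{-(N+1)}\xi\big),\qquad
\sum_{j=-M}^{N}\phi(2^{-j}\xi)=\vartheta\big(2^{-(N+1)}\xi\big)-\vartheta\big(2^{M}\xi\big).
\]
For any fixed $\xi$, once $N$ is large enough $2^{-(N+1)}|\xi|\le 3/4$, so $\vartheta(2^{-(N+1)}\xi)=1$; and for $\xi\neq 0$, once $M$ is large enough $2^{M}|\xi|\ge 4/3$, so $\vartheta(2^{M}\xi)=0$. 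Letting $N\to\infty$ gives the third bullet (the case $\xi=0$ being trivial, as then $\phi(0)=0$ and $\vartheta(0)=1$), and letting $M,N\to\infty$ with $\xi\neq 0$ gives the fourth. I do not expect any genuine obstacle here: the only step that is not pure manipulation is the existence of the smooth radial bump $\vartheta$, which is the standard construction of a Littlewood--Paley partition of unity, and once it is fixed the rest is a routine verification.
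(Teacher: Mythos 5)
Your construction is correct and complete: the mollification $\vartheta=\mathbf 1_{B(0,r)}*\psi$ with $r-\rho\ge 3/4$, $r+\rho\le 4/3$ produces a cutoff with exactly the required normalizations, and the four bullets then follow from the support bookkeeping and the telescoping identity precisely as you write. Note that the paper itself offers no proof of this proposition --- it is quoted verbatim from Bahouri--Chemin--Danchin \cite{B} --- and your argument is the standard one found there, so there is nothing to reconcile; the only cosmetic improvement would be to take the strict inequalities $r-\rho>3/4$ and $r+\rho<4/3$ so that $\vartheta\equiv 1$ on the closed ball $|\xi|\le 3/4$ without having to argue about boundary points of the convolution.
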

The nonhomogeneous and homogeneous dyadic blocks are defined as follows
\begin{align*}
\forall\, u\in \mathcal{S'}(\R^d),\quad \Delta_ju=0,\; \text{if}\; j\leq-2;\quad
\Delta_{-1}u=\vartheta(D)u;\quad
\Delta_ju=\phi(2^{-j}D)u,\; \; \text{if}\;j\geq0,
\end{align*}
and
\begin{align*}
\forall\, u\in \mathcal{S}'_h(\R^d),\quad
\dot{\Delta}_ju=\phi(2^{-j}D)u,\; \; \text{if}\;j\in \mathbb{Z},
\end{align*}
where $\mathcal{S}'_h$ is given by
\begin{eqnarray*}
\mathcal{S}'_h:=\Big\{u \in \mathcal{S'}(\mathbb{R}^d):\; \lim_{j\rightarrow-\infty}\|\vartheta(2^{-j}D)u\|_{L^{\infty}}=0 \Big\}.
\end{eqnarray*}
Also, we denote
$$\tilde{\Delta}_ju=\tilde{\phi}(2^{-j}D)u=\sum_{|k-j|\leq1}{\Delta}_{k}u\quad\text{with}\quad \tilde{\phi}(\cdot)=\phi(\cdot/2)+\phi(\cdot)+\phi(2\cdot).$$
The nonhomogeneous Bony's
decomposition reads as
\begin{align*}
uv={T}_{u}v+{T}_{v}u+{R}(u,v)\quad\text{with}
\end{align*}
\begin{eqnarray*}
{T}_{u}v=\sum_{j\geq -1}{S}_{j-1}u{\Delta}_jv \quad\mbox{and} \quad {R}(u,v)=\sum_{j\geq-1}{\Delta}_ju{\tilde{\Delta}}_jv.
\end{eqnarray*}
We recall the definition of the Besov spaces and norms.
\begin{definition}[Nonhomogeneous Besov spaces, see \cite{B}]
Let $s\in\mathbb{R}$ and $(p,r)\in[1, \infty]^2$. We define the nonhomogeneous Besov spaces
$$
B^{s}_{p,r}:=\f\{f\in \mathcal{S}':\;\|f\|_{B^{s}_{p,r}}:=\left\|2^{js}\|\Delta_jf\|_{L_x^p}\right\|_{\ell^r(j\geq-1)}<\infty\g\}.
$$
\end{definition}
\begin{definition}[Homogeneous Besov spaces, see \cite{B}]
Let $s\in\mathbb{R}$ and $(p,r)\in[1, \infty]^2$. We define the homogeneous Besov spaces
$$
\dot{B}^{s}_{p,r}:=\f\{f\in \mathcal{S}'_h:\;\|f\|_{\dot{B}^{s}_{p,r}}:=\left\|2^{js}\|\dot{\Delta}_jf\|_{L_x^p}\right\|_{\ell^r(j\in \mathbb{Z})}<\infty\g\}.
$$
\end{definition}
We remark that, for any $s>0$ and $(p,r)\in[1, \infty]^2$, then $B^{s}_{p,r}(\R^3)=\dot{B}^{s}_{p,r}(\R^3)\cap L^p(\R^3)$ and
$$\|f\|_{B^{s}_{p,r}}\approx \|f\|_{L^{p}}+\|f\|_{\dot{B}^{s}_{p,r}}.$$
Next we recall the following product law which will be used often in the sequel.
\begin{lemma}[\cite{B}]\label{lem21}
Assume that $s>0$ and $(p,r)\in(1,\infty)\times[1,\infty]$.  Then
 there exists a constant $C$, depending only on $d,p,r,s$ such that
$$\|fg\|_{B^{s}_{p,r}}\leq C\f(\|f\|_{L^\infty}\|g\|_{B^s_{p,r}}+\|g\|_{L^\infty}\|f\|_{B^{s}_{p,r}}\g),\quad \forall f,g\in L^\infty\cap B^{s}_{p,r}.$$
Furthermore, if $B^{s}_{p,r}\hookrightarrow L^{\infty}$, there holds
$$\|f g\|_{B^{s}_{p,r}}\leq C\|f\|_{B^{s}_{p,r}}\|g\|_{B^s_{p,r}},\quad \forall(f,g)\in B^{s}_{p,r}\times B^s_{p,r}.$$
\end{lemma}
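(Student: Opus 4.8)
Since this is the classical Besov product estimate, the plan is to follow the standard route through Bony's paraproduct decomposition and merely to record where the hypothesis $s>0$ is actually used. I would write $fg = T_f g + T_g f + R(f,g)$, with $T_f g=\sum_{j\ge-1}S_{j-1}f\,\Delta_j g$ and $R(f,g)=\sum_{j\ge-1}\Delta_j f\,\tilde\Delta_j g$ as in Section \ref{sec2}, and estimate the three pieces separately in $B^s_{p,r}$; the two paraproducts are the routine part, while the remainder is the one term that forces the positivity of $s$.

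For $T_f g$, the point is the frequency localization: the Fourier transform of each summand $S_{j-1}f\,\Delta_j g$ is supported in a dyadic annulus of size $\sim 2^j$, so only indices $j$ with $|j-j'|\le N_0$ (for a fixed integer $N_0$) contribute to $\Delta_{j'}(T_f g)$. Using the uniform bound $\|S_{j-1}f\|_{L^\infty}\le C\|f\|_{L^\infty}$ (each low-frequency cut-off is a convolution with a fixed $L^1$-normalized kernel) together with Hölder's inequality, I would obtain
$$2^{j's}\|\Delta_{j'}(T_f g)\|_{L^p}\lesssim \|f\|_{L^\infty}\sum_{|j-j'|\le N_0}2^{j's}\|\Delta_j g\|_{L^p},$$
and then taking the $\ell^r$-norm in $j'$ would give $\|T_f g\|_{B^s_{p,r}}\lesssim \|f\|_{L^\infty}\|g\|_{B^s_{p,r}}$. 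The same computation with $f$ and $g$ interchanged yields $\|T_g f\|_{B^s_{p,r}}\lesssim \|g\|_{L^\infty}\|f\|_{B^s_{p,r}}$; note that no sign restriction on $s$ is needed in either paraproduct bound.

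The remainder $R(f,g)$ is where the argument is delicate. Now the Fourier transform of $\Delta_j f\,\tilde\Delta_j g$ is supported in a \emph{ball} of radius $\sim 2^j$ instead of an annulus, so $\Delta_{j'}(R(f,g))$ collects all indices $j\ge j'-N_0$. I would bound $\|\Delta_j f\,\tilde\Delta_j g\|_{L^p}\le\|\Delta_j f\|_{L^\infty}\|\tilde\Delta_j g\|_{L^p}\le C\|f\|_{L^\infty}\|\tilde\Delta_j g\|_{L^p}$, write $2^{j's}=2^{(j'-j)s}\,2^{js}$, and use that $s>0$ together with $j'-j\le N_0$ makes $\sum_{j\ge j'-N_0}2^{(j'-j)s}$ a convergent geometric-type sum; a discrete Young convolution inequality then gives $\|R(f,g)\|_{B^s_{p,r}}\lesssim \|f\|_{L^\infty}\|g\|_{B^s_{p,r}}$ (and, by symmetry, $\lesssim\|g\|_{L^\infty}\|f\|_{B^s_{p,r}}$). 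Adding the three estimates proves the first inequality. The second is then immediate: if $B^s_{p,r}\hookrightarrow L^\infty$, I simply bound $\|f\|_{L^\infty}\lesssim\|f\|_{B^s_{p,r}}$ and $\|g\|_{L^\infty}\lesssim\|g\|_{B^s_{p,r}}$ in the first inequality.

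I expect the only real obstacle to be the remainder estimate: one has to notice that the overlap of frequencies in the low-frequency direction is summable \emph{precisely} because $s>0$, and this is sharp, in that for $s\le0$ the bilinear term $R(f,g)$ is genuinely not controlled by $\|f\|_{L^\infty}\|g\|_{B^s_{p,r}}$, so the hypothesis cannot be dropped. Everything else --- the Bernstein inequalities, the uniform kernel bounds for the Littlewood--Paley truncations, and the discrete Young inequality --- is standard.
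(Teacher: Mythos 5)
Your argument is correct and is exactly the standard proof of this product law via Bony's decomposition (paraproducts bounded for all $s$, remainder requiring $s>0$ for the low-frequency summation), which is the proof given in the cited reference \cite{B}; the paper itself states the lemma without proof. No gaps to report.
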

The following Bernstein's inequalities will be used in the sequel.
\begin{lemma}[\cite{B}] \label{lem22} Let $\mathcal{B}$ be a ball and $\mathcal{C}$ be an annulus. There exists a constant $C>0$ such that for all $k\in \mathbb{N}\cup \{0\}$, any $\lambda\in \R^+$ and any function $f\in L^p$ with $1\leq p \leq q \leq \infty$, we have
\begin{align*}
&{\rm{supp}}\ \widehat{f}\subset \lambda \mathcal{B}\;\Rightarrow\; \|D^kf\|_{L^q}\leq C^{k+1}\lambda^{k+(\frac{1}{p}-\frac{1}{q})}\|f\|_{L^p},  \\
&{\rm{supp}}\ \widehat{f}\subset \lambda \mathcal{C}\;\Rightarrow\; C^{-k-1}\lambda^k\|f\|_{L^p} \leq \|D^kf\|_{L^p} \leq C^{k+1}\lambda^k\|f\|_{L^p}.
\end{align*}
\end{lemma}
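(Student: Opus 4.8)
Both estimates are insensitive, up to the stated powers of $\lambda$, to the dilation $f\mapsto f(\lambda\,\cdot)$, so the plan is to first normalise $\lambda=1$ and then express $f$ (or each derivative $\pa^\alpha f$ with $|\alpha|=k$) as a convolution of a \emph{fixed} Schwartz kernel with $f$ (resp. with a first-order derivative of $f$), after which Young's convolution inequality closes everything. Here $D^k f$ denotes the family $\{\pa^\alpha f:|\alpha|=k\}$ measured in any fixed norm; the whole difficulty lies in arranging that the multiplicative constant grows no faster than $C^{k+1}$.

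For the first inequality, with $\mathrm{supp}\,\widehat f\subset\mathcal B$ I would fix $\psi\in\mathcal C_c^\infty(\R^d)$ equal to $1$ on $\mathcal B$, so that $\widehat f=\psi\widehat f$ and hence $\pa^\alpha f=(\pa^\alpha g)*f$ with $g=\mathcal F^{-1}\psi$ (up to a harmless constant). Young's inequality with $1/r=1-(1/p-1/q)$ gives $\|\pa^\alpha f\|_{L^q}\le\|\pa^\alpha g\|_{L^r}\|f\|_{L^p}$, and the remaining task is the uniform bound $\|\pa^\alpha g\|_{L^r}\le C^{k}$. For this I would stay on the Fourier side: $\mathcal F(\pa^\alpha g)=(i\xi)^\alpha\psi(\xi)$ is a degree-$k$ monomial times the fixed bump $\psi$, supported in a fixed ball, so a direct integration gives $\|\pa^\alpha g\|_{L^\infty}\le C R^{k}$ with $R=\sup_{\mathrm{supp}\,\psi}|\xi|$; multiplying by $(1+|x|^2)^N$ for $2N>d$ and transferring the $2N$ frequency derivatives onto $(i\xi)^\alpha\psi$ costs only polynomial-in-$k$ factors (the coefficients $\alpha!/(\alpha-\beta)!\le k^{|\beta|}$), so $\|\pa^\alpha g\|_{L^1}\le C k^{2N}R^k\le C(2R)^k$ because any fixed power of $k$ is dominated by $2^k$; interpolating between these $L^1$ and $L^\infty$ bounds yields $\|\pa^\alpha g\|_{L^r}\le C^k$. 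Undoing the dilation replaces this by $C^k\lambda^{\,k+d(1/p-1/q)}\|f\|_{L^p}$, which is the claim (with the usual dimensional factor in the exponent).

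For the second inequality the upper bound is just the first inequality with $p=q$, applied with any ball containing the annulus $\mathcal C$. For the lower bound the key observation is that on an annulus $|\xi|$ is bounded away from $0$: writing $|\xi|^2=-\sum_{j=1}^d(i\xi_j)^2$ and choosing $\widetilde\phi\in\mathcal C_c^\infty(\R^d\setminus\{0\})$ equal to $1$ on $\mathcal C$, we get $\widehat f=\sum_j m_j\,\widehat{\pa_j f}$ with $m_j(\xi)=-i\xi_j|\xi|^{-2}\widetilde\phi(\xi)\in\mathcal C_c^\infty(\R^d\setminus\{0\})$, whence $f=\sum_j(\mathcal F^{-1}m_j)*\pa_j f$ and, by Young at $\lambda=1$, $\|f\|_{L^p}\le C\max_j\|\pa_j f\|_{L^p}$. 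Since every $\pa^\alpha f$ still has Fourier support in $\mathcal C$, iterating this bound $k$ times gives $\|f\|_{L^p}\le C^k\max_{|\alpha|=k}\|\pa^\alpha f\|_{L^p}$, and restoring $\lambda$ inserts the factor $\lambda^{-k}$, i.e. $C^{-k-1}\lambda^k\|f\|_{L^p}\le\|D^k f\|_{L^p}$.

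The step I expect to be the real obstacle is exactly this bookkeeping of the constant: a careless argument differentiates a fixed Schwartz function $k$ times and loses a factor $k!$. The remedy, which should be stressed, is to place all $k$ derivatives on the Fourier side, where they act on the polynomial $(i\xi)^\alpha$ and therefore cost only polynomial-in-$k$ coefficients, together with the elementary fact that any polynomial in $k$ is $\le C\,2^k$ and can be absorbed into the base of $C^{k+1}$; one must also make sure every auxiliary cutoff is supported slightly larger than $\mathcal B$ (resp. $\mathcal C$) and, for the lower bound, stays away from the origin, but this causes no real trouble. Everything else is routine: Young's inequality, real interpolation between $L^1$ and $L^\infty$, and the standard estimate $\|\mathcal F^{-1}h\|_{L^1\cap L^\infty}<\infty$ for $h\in\mathcal C_c^\infty$.
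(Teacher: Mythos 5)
Your proposal is correct. The paper does not prove this lemma at all --- it is quoted verbatim from the reference [B] (Bahouri--Chemin--Danchin), so the comparison is really with the standard textbook proof. For the direct inequalities your argument is essentially that proof: dilate to $\lambda=1$, write $\pa^\alpha f=(\pa^\alpha g)*f$ with $g=\mathcal{F}^{-1}\psi$ for a fixed cutoff $\psi\equiv 1$ on $\mathcal{B}$, apply Young with $1/r=1-(1/p-1/q)$, and control $\|\pa^\alpha g\|_{L^1\cap L^\infty}$ by putting all $k$ derivatives on the Fourier side so that they act on the monomial $(i\xi)^\alpha$ and cost only $k^{2N}R^k\leq C(2R)^k$ rather than $k!$; this is exactly the point where a careless argument fails, and you handle it correctly. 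For the reverse inequality you deviate slightly from [B]: the textbook uses the single $k$-th order identity $|\xi|^{2k}=\sum_{|\alpha|=k}\frac{k!}{\alpha!}\xi^{2\alpha}$ to write $f=\sum_{|\alpha|=k}g_\alpha*\pa^\alpha f$ and then bounds $\|g_\alpha\|_{L^1}$ with the same combinatorial care, whereas you iterate the first-order identity $f=\sum_j(\mathcal{F}^{-1}m_j)*\pa_j f$ (with $m_j=-i\xi_j|\xi|^{-2}\tilde\phi$) $k$ times, using that the Fourier support of each $\pa^\alpha f$ remains in $\mathcal{C}$. Your variant trades the multinomial bookkeeping for a clean induction and yields the same geometric constant $(Cd)^k$, so both routes are equally effective. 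You are also right to flag that the exponent in the first inequality should read $\lambda^{k+d(\frac1p-\frac1q)}$; the paper's statement omits the dimensional factor $d$, which is a typo.
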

The following corollaries are straightforward.

\begin{corollary} If $1\leq q \leq p \leq \infty$, then
$$
\|\Delta_j u\|_{L^p(\R^d)} \leq 2^{d\left(\frac{1}{q}-\frac{1}{p}\right) j}\|\Delta_j u\|_{L^q(\R^d)}.
$$
\end{corollary}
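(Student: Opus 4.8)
\emph{Sketch of the argument.} The plan is to deduce this inequality directly from the first Bernstein estimate of Lemma \ref{lem22}, since it is nothing but that estimate applied to a spectrally localized function. The key observation is that $\Delta_j u$ is frequency-localized: for $j\geq 0$ its Fourier transform is supported in $2^j\mathcal{C}$, hence in $2^{j+1}\mathcal{B}$, while for $j=-1$ it is supported in $\mathcal{B}$. Thus $\mathrm{supp}\ \widehat{\Delta_j u}$ sits in a ball of radius $\lambda\approx 2^j$, and since $1\leq q\leq p\leq\infty$ we are exactly in the setting of the first line of Lemma \ref{lem22}, modulo renaming its two integrability exponents. Applying that estimate with $k=0$, with the smaller exponent $q$ playing the role of ``$p$'' and the larger exponent $p$ playing the role of ``$q$'', and with $\lambda\approx 2^j$, yields $\|\Delta_j u\|_{L^p}\les \lambda^{d(1/q-1/p)}\|\Delta_j u\|_{L^q}\les 2^{jd(1/q-1/p)}\|\Delta_j u\|_{L^q}$, which is the stated bound (the constant, and the harmless factor $2^{d(1/q-1/p)}\leq 2^{d}$ coming from $\lambda\le 2^{j+1}$, being absorbed into the $\les$).

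If one wants the estimate with a constant manifestly independent of $p$, $q$ and $j$, I would instead argue by rescaling a convolution kernel. Recalling from Proposition \ref{po} and its consequences that $\tilde\Delta_j\Delta_j=\Delta_j$, one writes $\Delta_j u=2^{jd}\check{\tilde\phi}(2^j\cdot)* \Delta_j u$ for $j\geq 0$ (and the same identity with $\vartheta$ in place of $\tilde\phi$ for $j=-1$). Young's convolution inequality with exponents $r,q,p$ satisfying $1+\frac1p=\frac1r+\frac1q$ then gives $\|\Delta_j u\|_{L^p}\leq \|2^{jd}\check{\tilde\phi}(2^j\cdot)\|_{L^r}\,\|\Delta_j u\|_{L^q}$, and a change of variables shows $\|2^{jd}\check{\tilde\phi}(2^j\cdot)\|_{L^r}=2^{jd(1-1/r)}\|\check{\tilde\phi}\|_{L^r}=2^{jd(1/q-1/p)}\|\check{\tilde\phi}\|_{L^r}$, using $1-\frac1r=\frac1q-\frac1p$. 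Since $\check{\tilde\phi}\in\mathcal{S}(\R^d)$, interpolation between $L^1$ and $L^\infty$ bounds $\|\check{\tilde\phi}\|_{L^r}$ by $\max\{\|\check{\tilde\phi}\|_{L^1},\|\check{\tilde\phi}\|_{L^\infty}\}$ uniformly in $r\in[1,\infty]$, so $\|\Delta_j u\|_{L^p}\leq C\,2^{jd(1/q-1/p)}\|\Delta_j u\|_{L^q}$ holds for all $j\geq -1$ and all $1\leq q\leq p\leq\infty$ with one and the same $C$.

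I do not expect any real difficulty here: this is the standard ``Bernstein in disguise'' computation and is listed as straightforward for good reason. The only two points that call for a little care are (i) keeping the implied constant uniform, which is why I prefer the kernel-rescaling route, where uniformity in $r$ reduces to a one-line interpolation inequality for $\|\check{\tilde\phi}\|_{L^r}$; and (ii) the low-frequency block $j=-1$, whose spectrum lives in a fixed ball rather than a dyadic annulus, so one simply replaces $\tilde\phi$ by (a slightly enlarged) $\vartheta$ and observes that the extra fixed factor is harmless. Neither affects the argument in any essential way.
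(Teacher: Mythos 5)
Your proposal is correct and matches the paper's intent: the paper offers no written proof, labelling the corollary a straightforward consequence of Lemma \ref{lem22}, and your first paragraph is exactly that direct application of the first Bernstein inequality with $k=0$ and $\lambda\approx 2^j$ to the spectrally localized block $\Delta_j u$. Your second, kernel-rescaling argument via Young's inequality is a correct and slightly more careful alternative (it makes the uniformity of the constant in $p,q,j$ explicit), but it is not needed beyond what the paper assumes.
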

\begin{corollary}  We have the following equivalence
$$
\left\|\nabla^k f\right\|_{\dot{B}_{p, q}^s(\R^d)}\approx\|f\|_{\dot{B}_{p, q}^{s+k}(\R^d)}.
$$
\end{corollary}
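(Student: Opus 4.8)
\emph{Proof proposal.} The plan is to reduce the asserted equivalence to the two–sided Bernstein estimate of Lemma~\ref{lem22}, carried out dyadic block by dyadic block. First I would use that $\nabla^{k}$ (understood, exactly as the $D^{k}$ of Lemma~\ref{lem22}, as the collection of all derivatives of order $k$) is a Fourier multiplier whose symbol is homogeneous of degree $k$, and therefore commutes with every homogeneous truncation:
\[
\dot{\Delta}_{j}\bigl(\nabla^{k}f\bigr)=\nabla^{k}\dot{\Delta}_{j}f,\qquad j\in\Z .
\]
Since $\widehat{\dot{\Delta}_{j}f}$ is supported in the annulus $2^{j}\mathcal{C}$, the function $\dot{\Delta}_{j}f$ is precisely of the type to which the second statement of Lemma~\ref{lem22} applies, with $\lambda=2^{j}$.

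Applying that statement to $\dot{\Delta}_{j}f$ in place of $f$ yields a constant $C>0$ that is independent of $j$ — the constant in Lemma~\ref{lem22} depends only on $k$ and on the fixed ball and annulus, not on $\lambda$ — such that
\[
C^{-k-1}\,2^{jk}\,\|\dot{\Delta}_{j}f\|_{L^{p}(\R^{d})}\;\leq\;\bigl\|\dot{\Delta}_{j}\bigl(\nabla^{k}f\bigr)\bigr\|_{L^{p}(\R^{d})}\;\leq\;C^{k+1}\,2^{jk}\,\|\dot{\Delta}_{j}f\|_{L^{p}(\R^{d})} .
\]
Multiplying through by $2^{js}$ gives $2^{js}\|\dot{\Delta}_{j}(\nabla^{k}f)\|_{L^{p}}\approx 2^{j(s+k)}\|\dot{\Delta}_{j}f\|_{L^{p}}$ with constants uniform in $j$, and taking the $\ell^{q}(\Z)$ norm in $j$ of both sides together with the definition of the homogeneous Besov norm produces
\[
\|\nabla^{k}f\|_{\dot{B}^{s}_{p,q}(\R^{d})}=\bigl\|2^{js}\|\dot{\Delta}_{j}(\nabla^{k}f)\|_{L^{p}}\bigr\|_{\ell^{q}(j\in\Z)}\approx\bigl\|2^{j(s+k)}\|\dot{\Delta}_{j}f\|_{L^{p}}\bigr\|_{\ell^{q}(j\in\Z)}=\|f\|_{\dot{B}^{s+k}_{p,q}(\R^{d})},
\]
which is the claimed equivalence.

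There is no genuine obstacle here; the statement is, as noted, an immediate corollary of Bernstein's inequalities. The only two points deserving a line of care are: (i) that the implied constant from Lemma~\ref{lem22} is uniform in the dyadic parameter $j$, which holds because it does not depend on $\lambda$; and (ii) that $f\in\mathcal{S}'_{h}$ if and only if $\nabla^{k}f\in\mathcal{S}'_{h}$, so that both homogeneous Besov semi-norms are evaluated on admissible representatives — this follows since $\vartheta(2^{-j}D)\nabla^{k}f=\nabla^{k}\vartheta(2^{-j}D)f$ and Bernstein bounds its $L^{\infty}$ norm by $C\,2^{jk}\|\vartheta(2^{-j}D)f\|_{L^{\infty}}\to 0$ as $j\to-\infty$.
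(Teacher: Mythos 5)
Your argument is correct and is exactly the intended one: the paper states this as an immediate corollary of the two-sided Bernstein inequality of Lemma~\ref{lem22}, applied blockwise with $\lambda=2^{j}$ after commuting $\nabla^{k}$ with $\dot{\Delta}_{j}$, and gives no further detail. Your two remarks on the uniformity of the constant in $j$ and on preservation of $\mathcal{S}'_{h}$ are careful additions but do not change the route.
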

\begin{corollary}
For $s>d/ p$ with $1 \leq p, q \leq \infty$, or $s=d / p$ with $1 \leq p \leq \infty$ and $q=1$, we have the estimate
$$
\|f\|_{L^{\infty}(\R^d)} \leq C\|f\|_{B_{p, q}^s(\R^d)},
$$
Thus, for $s> d/ p+1$ with $1 \leq p, q \leq \infty$ or $s=d / p+1$ with $1 \leq p \leq \infty$ and $q=1$, we have the estimates
$$
\|\nabla f\|_{L^{\infty}(\R^d)} \leq C\|\nabla f\|_{B_{p, q}^{s-1}(\R^d)} \leq C\|f\|_{B_{p, q}^s(\R^d)}.
$$
\end{corollary}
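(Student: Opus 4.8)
The plan is to combine the Littlewood--Paley decomposition with the Bernstein inequalities of Lemma~\ref{lem22}, so that everything reduces to summing a geometric series in the dyadic index. I would start from $f=\sum_{j\geq-1}\Delta_jf$ and the triangle inequality $\|f\|_{L^\infty}\leq\sum_{j\geq-1}\|\Delta_jf\|_{L^\infty}$. Since $\widehat{\Delta_{-1}f}$ is supported in a fixed ball and $\widehat{\Delta_jf}$ (for $j\geq0$) in an annulus of size $2^j$, Bernstein's inequality with the exponent pair $(p,\infty)$ gives $\|\Delta_jf\|_{L^\infty}\leq C\,2^{jd/p}\|\Delta_jf\|_{L^p}$ for every $j\geq-1$. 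Writing $2^{jd/p}=2^{j(d/p-s)}\,2^{js}$ then leaves us to control $\sum_{j\geq-1}2^{j(d/p-s)}\bigl(2^{js}\|\Delta_jf\|_{L^p}\bigr)$.

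For the range $s>d/p$, $1\leq q\leq\infty$, I would apply H\"older's inequality in $j$ with conjugate exponents $q$ and $q'$: the factor $\bigl(\sum_{j\geq-1}2^{jq'(d/p-s)}\bigr)^{1/q'}$ is a convergent geometric series precisely because $d/p-s<0$, while the remaining factor is exactly $\|f\|_{B^s_{p,q}}$. In the borderline case $s=d/p$, $q=1$, no H\"older is needed: the sum collapses to $\sum_{j\geq-1}2^{js}\|\Delta_jf\|_{L^p}=\|f\|_{B^{d/p}_{p,1}}$. This gives the first embedding $\|f\|_{L^\infty}\leq C\|f\|_{B^s_{p,q}}$.

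For the gradient statement I would first record the elementary bound $\|\nabla f\|_{B^{s-1}_{p,q}}\leq C\|f\|_{B^s_{p,q}}$: for $j\geq0$ the annulus version of Bernstein gives $\|\nabla\Delta_jf\|_{L^p}\leq C2^j\|\Delta_jf\|_{L^p}$, hence $2^{j(s-1)}\|\Delta_j\nabla f\|_{L^p}\lesssim2^{js}\|\Delta_jf\|_{L^p}$, and for $j=-1$ the multiplier $\nabla$ commutes with $\Delta_{-1}$ and leaves the Fourier support inside a fixed ball, so the ball version of Bernstein yields $\|\nabla\Delta_{-1}f\|_{L^p}\lesssim\|\Delta_{-1}f\|_{L^p}$; taking $\ell^q$ norms gives the claim. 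Since $s>d/p+1$ (resp. $s=d/p+1$ with $q=1$) is the same as $s-1>d/p$ (resp. $s-1=d/p$ with $q=1$), applying the first embedding to $\nabla f$ in place of $f$ yields $\|\nabla f\|_{L^\infty}\leq C\|\nabla f\|_{B^{s-1}_{p,q}}\leq C\|f\|_{B^s_{p,q}}$.

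Since this is a standard consequence of the two ingredients already stated, there is no genuine obstacle; the only point deserving attention is that the convergence of $\sum_j2^{jq'(d/p-s)}$ relies on the \emph{strict} inequality $s>d/p$, which is exactly why the endpoint $s=d/p$ is admissible only when $q=1$ (there summability comes from the $\ell^1$ structure rather than from decay in $j$), and that the lowest block $j=-1$ must be handled with the ball version of Bernstein rather than the annulus version.
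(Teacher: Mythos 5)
Your argument is correct and is exactly the intended derivation: the paper states this corollary without proof as a ``straightforward'' consequence of the Bernstein inequalities of Lemma~\ref{lem22}, and your Littlewood--Paley decomposition plus block-wise Bernstein estimate, with H\"older in $j$ for $s>d/p$ and direct summation in the endpoint case $q=1$, is the standard way to make that precise. Your handling of the $j=-1$ block via the ball version of Bernstein and the reduction of the gradient estimate to the first embedding applied to $\nabla f$ are both sound.
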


\begin{lemma}\label{lem23}
For $0<s<1+\frac dp$ and $(p,r)\in(1,\infty)\times[1,\infty]$, we have
\bbal
\|\left[\mathcal{R},u\cd \na\right]f\|_{B^s_{p,r}(\R^d)}\leq C\|\na u\|_{L^\infty\cap B^{\frac dp}_{p,\infty}(\R^d)}\|f\|_{B^s_{p,r}(\R^d)},
\end{align*}
where $\mathcal{R}$ is the Riesz transform.
\end{lemma}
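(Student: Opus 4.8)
\textbf{Proof proposal for Lemma~\ref{lem23}.} The plan is to decompose the commutator $[\mathcal{R},u\cd\na]f$ via the Bony paraproduct decomposition applied to both the multiplier $\mathcal{R}$ and the bilinear term $u\cd\na f$, and then to estimate each resulting piece in $B^s_{p,r}$ using the fact that $\mathcal{R}$ is a Fourier multiplier of degree $0$ whose kernel is well-localized in frequency. Concretely, writing $u\cd\na f=\sum_{\ell}\pa_\ell(u_\ell f)$ (using $\div u=0$) and inserting Littlewood--Paley projections, I would split
\bbal
[\mathcal{R},u\cd\na]f=\sum_\ell\big([\mathcal{R},T_{u_\ell}]\pa_\ell f+[\mathcal{R},T_{\pa_\ell f}]u_\ell+[\mathcal{R},R(u_\ell,\cdot)]\pa_\ell f\big)+\text{(similar)},
\end{align*}
so that the analysis reduces to three model commutators: a paraproduct-type piece $[\mathcal{R},T_{u}]g$, a ``remainder'' piece, and a reverse-paraproduct piece $T_{\mathcal{R}g}u-\mathcal{R}T_gu$.

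The key mechanism is that in the paraproduct piece $[\mathcal{R},T_u]g=\sum_j\big(S_{j-1}u\,\mathcal{R}\Delta_j g-\mathcal{R}(S_{j-1}u\,\Delta_j g)\big)$, the low-frequency factor $S_{j-1}u$ and the high-frequency factor $\Delta_j g$ have separated frequency supports, so the commutator gains a derivative: one writes $\mathcal{R}$ with a suitably rescaled smooth cutoff $\tilde\phi(2^{-j}D)\mathcal{R}$ acting on $\Delta_j g$, and then uses the standard commutator estimate (first-order Taylor expansion of the kernel) to obtain $\|[\mathcal{R},S_{j-1}u]\Delta_j g\|_{L^p}\lesssim 2^{-j}\|\na S_{j-1}u\|_{L^\infty}\|\Delta_j g\|_{L^p}$. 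Applying this with $g=\pa_\ell f$ (which shifts $s\mapsto s+1$ but is compensated by the $2^{-j}$ gain) and summing in $j$ against the $\ell^r$ norm yields the bound $\|\na u\|_{L^\infty}\|f\|_{B^s_{p,r}}$. For the reverse-paraproduct and remainder pieces, where frequencies are comparable or the roll of high/low is exchanged, one cannot gain from kernel smoothness directly; instead I would use the $B^{d/p}_{p,\infty}$-norm of $\na u$ together with the embedding $B^{d/p}_{p,\infty}\hookrightarrow$ (almost) $L^\infty$ and the restriction $s<1+d/p$ to keep the sum $\sum_{j\le k}2^{(k-j)(s-1-d/p)}$ convergent when redistributing derivatives onto the low-frequency factor $u$. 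This is precisely where the hypothesis $0<s<1+d/p$ and the appearance of the $B^{d/p}_{p,\infty}$ norm enter, and the requirement $p\in(1,\infty)$ is needed so that $\mathcal{R}$ is bounded on $L^p$ (and on each $L^p$-localized block with constants uniform in $j$).

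The main obstacle I expect is the careful bookkeeping in the remainder term $R(u,\na f)$ and the reverse paraproduct $T_{\na f}u$: here the ``bad'' factor is $\na f$ at high or comparable frequency while $\mathcal{R}$ falls on a product without frequency separation, so the naive estimate loses the derivative one needs. The trick is to move one derivative from $\na f$ back onto $u$ by integration by parts inside the $\mathcal{Q}$-type structure (using $\div u=0$ to rewrite $u\cd\na f=\div(uf)$), after which $\na$ hits $u$ and produces $\na u$, and then to absorb the loss of localization using the $B^{d/p}_{p,\infty}$ control on $\na u$ — the summability being guaranteed exactly by $s-1-d/p<0$. Once all three model pieces are bounded by $C\|\na u\|_{L^\infty\cap B^{d/p}_{p,\infty}}\|f\|_{B^s_{p,r}}$, summing them gives the claim. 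A reference point for the whole scheme is the classical commutator estimate in Bahri--Chemin--Danchin \cite{B}, which I would adapt by replacing the transport field's regularity with the weaker $L^\infty\cap B^{d/p}_{p,\infty}$ norm of $\na u$.
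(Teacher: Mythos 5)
Your proposal is correct and follows essentially the same route as the paper: Bony decomposition of the commutator into the paraproduct, reverse-paraproduct and remainder pieces, the standard first-order commutator gain of $2^{-j}\|\na u\|_{L^\infty}$ on the paraproduct term, Bernstein together with the $B^{d/p}_{p,\infty}$ norm of $\na u$ and the condition $s<1+\frac dp$ on the reverse paraproduct, and the divergence-free structure to shift a derivative onto $u$ in the remainder. The only detail not spelled out in your sketch is the separate treatment of the low-frequency block $\Delta_{-1}u$ in the remainder (handled in the paper via the pointwise kernel decay of $\pa_i\mathcal{R}\tilde\chi(D)$), which is routine.
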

\begin{proof}
Using Bony's decomposition, we split the commutator into three parts
$$
\begin{aligned}
{\left[\mathcal{R}, u \cdot \nabla\right] f}
&=\sum_{q \in \mathbb{N}}\left[\mathcal{R}, S_{q-1} u \cdot \nabla\right] \Delta_q f+\sum_{q \in \mathbb{N}}\left[\mathcal{R}, \Delta_q u \cdot \nabla\right] S_{q-1} f\\
&\quad+\sum_{q \geq-1}\left[\mathcal{R}, \Delta_q u \cdot \nabla\right] \tilde{\Delta}_q f \\
&=:\mathrm{I}_1+\mathrm{I}_2+\mathrm{I}_3.
\end{aligned}
$$
For the first term $\mathrm{I}_1$, notice that the Fourier transform of $S_{q-1} u \Delta_q f$  for any $q \in \mathbb{N}$ is supported in a ring of size $2^q$, then from the standard commutator(see \cite[Lemma 2.97]{B}), we have for any $j \geq-1$
\begin{align*}
\left\|\Delta_j \mathrm{I}_1\right\|_{L^p} & \lesssim \sum_{|q-j| \leq 4\atop  q \in \mathbb{N}}\left\|\left[\mathcal{R}\tilde{\Delta}_j, S_{q-1} u \cdot \nabla\right] \Delta_q f\right\|_{L^p} \\
& \lesssim \sum_{|q-j| \leq 4} 2^{-q}\|\nabla u\|_{L^{\infty}} 2^q\left\|\Delta_q f\right\|_{L^p} \\
& \lesssim c_j 2^{-j s}\|\nabla u\|_{L^{\infty}}\|f\|_{B_{p, r}^{s}},
\end{align*}
where $\|c_j\|_{\ell^r(j \geq-1)}=1$. Thus we obtain
$$
\|\mathrm{I}\|_{B_{p, r}^s} \lesssim\|\nabla u\|_{L^{\infty}}\|f\|_{B_{p, r}^{s}} .
$$
For the second term $\mathrm{I}_2$, as above from a direct calculation we have
\begin{align*}
\left\|\Delta_j \mathrm{I}_2\right\|_{L^p} & \lesssim \sum_{|q-j| \leq 4\atop  q \in \mathbb{N}}\left\|\left[\mathcal{R}\tilde{\Delta}_j, \Delta_q u \cdot \nabla\right] S_{q-1} f\right\|_{L^p} \\
& \lesssim \sum_{|q-j| \leq 4} 2^{-q}\|\nabla\Delta_q u\|_{L^p}\left\|\nabla S_{q-1} f\right\|_{L^{\infty}} \\
& \lesssim \sum_{|q-j| \leq 4}2^{-qs}2^{q\frac dp}\|\nabla\Delta_q u\|_{L^p} \sum_{-1 \leq k \leq q-2} 2^{-q(1+\frac dp-s)} 2^{k}\left\|\Delta_{k} f\right\|_{L^{\infty}}
\\
& \lesssim 2^{-js}\|\nabla u\|_{B^{\frac dp}_{p,\infty}}  \sum_{-1 \leq k \leq q-2} 2^{\left(k-q\right)(1+\frac dp-s)} 2^{ks}\left\|\Delta_{k} f\right\|_{L^{p}}.
\end{align*}
Using the discrete Young inequality, we obtain for any $s<1+\frac dp$
$$
\|\mathrm{I}_2\|_{B_{p, r}^s} \lesssim\|\nabla u\|_{B^{\frac dp}_{p,\infty}}\|f\|_{B_{p, r}^s}.
$$
For the last term $\mathrm{I}_3$, we further write it
\begin{align*}
\mathrm{\mathrm{I}_3}&=\sum_{q \geq 0} \operatorname{div}\left[\mathcal{R}, \Delta_q u\right] \tilde{\Delta}_q f+\sum_{1 \leq i \leq n}\left[\partial_i \mathcal{R}, \Delta_{-1} u^i\right] \tilde{\Delta}_{-1} f=:\mathrm{I}_{3,1}+\mathrm{I}_{3,2}.
\end{align*}
By the Bernstein inequality, we treat the term $\mathrm{I}_{3,1}$ as follows
\begin{align*}
\left\|\Delta_j \mathrm{I}_{3,1}\right\|_{L^p}
& \leq \sum_{q \geq j-3\atop  q \geq 0}\left\|\Delta_j \operatorname{div} \mathcal{R}\left(\Delta_q u \tilde{\Delta}_q f\right)\right\|_{L^p}+\sum_{q \geq j-3\atop  q \geq 0}\left\|\Delta_j \operatorname{div}\left(\Delta_q u \mathcal{R} \tilde{\Delta}_q f\right)\right\|_{L^p}\\
& \lesssim \sum_{q \geq j-3}2^{j} 2^{-q}\left\|\Delta_q \nabla u\right\|_{L^\infty}\left\|\tilde{\Delta}_q f\right\|_{L^{p}} \\
& \lesssim\|\nabla u\|_{L^\infty} 2^{-j s} \sum_{q \geq j-4}2^{(j-q)(s+1)} 2^{qs}\left\|\Delta_q f\right\|_{L^{p}}.
\end{align*}
Thus we obtain for any $s>-1$
$$
\left\|\mathrm{I}_{3,1}\right\|_{B_{p, r}^s} \lesssim\|\nabla u\|_{L^\infty}\|f\|_{B_{p, r}^{s}}.
$$
For the term $\mathrm{I}_{3,2}$ , from the spectral property, there exists $\tilde{\chi} \in \mathcal{D}(\mathbb{R}^d)$ such that
$$
\mathrm{I}_{3,2}=\sum_{1 \leq i \leq n}\left[\partial_i \mathcal{R} \tilde{\chi}(D), \Delta_{-1} u^i\right] \tilde{\Delta}_{-1} f.
$$
Notice that $\partial_i \mathcal{R} \tilde{\chi}(D)$ is a convolution operator with kernel $h^{\prime}$ satisfying
$$
\left|h^{\prime}(x)\right| \leq C(1+|x|)^{-d-1}, \quad \forall x \in \mathbb{R}^d,
$$
and using the fact $\Delta_j \mathrm{I}_{3,2}=0$ for any $j \geq 3$, we have
\begin{align*}
\left\|\mathrm{I}_{3,2}\right\|_{B_{p, r}^s} & \lesssim\left\|\left[h'\ast, \Delta_{-1} u\right] \tilde{\Delta}_{-1} f\right\|_{L^p} \\
& \lesssim\left\|x h^{\prime}\right\|_{L^{p'}}\left\|\nabla \Delta_{-1} u\right\|_{L^\infty}\left\|\tilde{\Delta}_{-1} f\right\|_{L^p} \\
& \lesssim\|\nabla u\|_{L^\infty}\|f\|_{L^p}.
\end{align*}
This ends the proof of Lemma \ref{lem23}.
\end{proof}
\section{Proof of Propositions \ref{pro0} and \ref{pro00}}\label{sec3}
In this section we aim to proving Propositions \ref{pro0} and \ref{pro00}.
\begin{proof} We first prove Proposition \ref{pro0}. Let $\chi_k=\chi(2^{k}\cdot)$.
Due to the fact $\phi(2^{-j}\xi)\equiv 1$ in $\mathcal{C}_j=\left\{\xi\in\R^d: \frac{4}{3}2^{j}\leq |\xi|\leq \frac{3}{2}2^{j}\right\}$ and $\mathcal{F}({\Delta}_j\chi_k)=\phi(2^{-j}\cdot)\widehat{\chi_k}$ for all $j\geq-1$,
then we have
$
\mathcal{F}({\Delta}_j\chi_k)=0$ for  $j\neq k,
$
and thus
$
{\Delta}_j\chi_k=
\chi_k$ if $j=k$. In particular, ${\Delta}_{-1}\chi_k={\Delta}_{0}\chi_k=0.$ Since $\chi$ is a Schwartz function, we have for $M\geq 100d$
\bal\label{s}
|\chi(x)|\leq C(1+|x|)^{-M}, \quad \forall x \in \mathbb{R}^d.
\end{align}
Using the definition of Besov space and the fact that $\chi$ is a Schwartz function, yields
\bbal
\|f_n\|_{B^{1+\frac{d}{p}}_{p,1}(\R^d)}&\approx\Gamma_n\sum_{j=1}^{n}\frac{1}{j}2^{\fr{d}{p}j}\f\|\chi(2^{j}x)\g\|_{L^p(\R^d)}\approx \Gamma_n\sum_{j=1}^{n}\frac{1}{j}\approx \Gamma_n\ln n.
\end{align*}
 We next prove the second result. Obvious, $\chi(0)=0$. We should emphasize that the fact is crucial. We divide the proof into two cases.

{\bf Case when $|x|\geq 1$.} Using \eqref{s}, we have
\bal\label{f1}
\frac{1}{\Gamma_n}|f_n(x)|&\leq \sum^{n}_{k=1}\frac1k|\chi(2^{k}x)|\leq \sum^{n}_{k=1}\frac{C}{1+|2^{k}x|}
\leq   C\sum^{n}_{k=1}2^{-k} \leq C.
\end{align}

{\bf Case when $|x|\in(0,1]$.} Picking large $k_0$ such that $1\leq 2^{k_0}|x|\leq 2$, then using \eqref{s}, we have
\bal\label{f2}
\frac{1}{\Gamma_n}|f_n(x)|&\leq \sum^{k_0}_{k=1}|\chi(2^{k}x)-\chi(0)|+ \sum^{n}_{k=k_0}|\chi(2^{k}x)|\no\\
&\leq \sum^{k_0}_{k=1}|2^{k}x|+ \sum^{n}_{k=k_0}\frac{C}{1+2^{k}|x|}\no\\
&\leq 2\sum^{k_0}_{k=1}2^{k-k_0}+  \sum^{n}_{k=k_0}\frac{C}{1+2^{k-k_0}}\leq C.
\end{align}
Combining \eqref{f1} and \eqref{f2}, we complete the proof of Proposition \ref{pro0}.

Next we prove Proposition \ref{pro00}.
The first result is obvious. Also, it is easy to show that
\bbal
\|\mathcal{R}_if_n\|_{L^\infty}\leq C\|\mathcal{R}_if_n\|_{B^{\frac d2}_{2,1}}\leq C \Gamma_n\ln n.
\end{align*}
Hence, we just need to prove
\bbal
\|\mathcal{R}_if_n\|_{L^\infty}\geq c \Gamma_n\ln n.
\end{align*}
We divide the proof into two cases.

{\bf Case 1: $\chi=\pa_i\check{\varphi}$.}\,
Due to $\widehat{\mathcal{R}_i}=\frac{\xi_i}{|\xi|}$, one has
\bbal
\mathcal{R}_i f_n(x)=\Gamma_n\sum^{n}_{k=1}\frac1k\gamma(2^kx),
\end{align*}
 where
$$\hat{\gamma}(\xi)=\frac{\xi^2_i}{|\xi|}{\varphi}(\xi).$$
Notice that $\hat{\gamma}(\xi)=\hat{\gamma}(|\xi|)$ and recall that ${\rm{supp}} \;{\varphi}\subset \mathcal{C}=\{\xi\in\R^d:\,4/3\leq |\xi|\leq 3/2\}$, one has
\bbal
\gamma(0)&=\int_{\R^d}\frac{\xi^2_i}{|\xi|}{\varphi}(\xi)\dd \xi=\frac1d\int_{\R^d}|\xi|{\varphi}(\xi)\dd \xi=\tilde{c}>0.
\end{align*}
Thus
\bbal
\|\mathcal{R}_if_n\|_{L^\infty}&\geq |\mathcal{R}_if_n(x=0)|=\gamma(0)\Gamma_n\sum^{n}_{k=1}\frac1k\geq \tilde{c}\Gamma_n\ln n.
\end{align*}

{\bf Case 2: $\chi=x_i\check{\varphi}$.}\,
Due to $\widehat{\mathcal{R}_i}=\frac{\xi_i}{|\xi|}$, one has
\bbal
\mathcal{R}_i f_n(x)=\Gamma_n\sum^{n}_{k=1}\frac1k\gamma(2^kx),
\end{align*}
 where
$$\hat{\gamma}(\xi)=\frac{\xi_i}{|\xi|}\pa_{\xi_i}\varphi(\xi).$$
Notice that $\hat{\gamma}(\xi)=\hat{\gamma}(|\xi|)$, then integrating by parts, one has
\bbal
-\gamma(0)&=-\int_{\R^d}\frac{\xi_i}{|\xi|}\pa_{\xi_i}\varphi(\xi)\dd \xi
=\int_{\R^d}\pa_{\xi_i}\f(\frac{\xi_i}{|\xi|}\g)\varphi(\xi)\dd \xi\\
&=\int_{\R^d}\frac{|\xi|^2-\xi^2_i}{|\xi|^3}\varphi(\xi)\dd \xi
\\&=\frac{d-1}{d}\int_{\R^d}\frac{1}{|\xi|}\varphi(\xi)\dd \xi\geq \tilde{c}>0.
\end{align*}
Thus
\bbal
\|\mathcal{R}_if_n\|_{L^\infty}&\geq |\mathcal{R}_if_n(x=0)|=-\gamma(0)\Gamma_n\sum^{n}_{k=1}\frac1k\geq \tilde{c}\Gamma_n\ln n.
\end{align*}
This completes the proof of Proposition \ref{pro00}.
\end{proof}

\section{Proof of Theorem \ref{th1}}\label{sec4}
In this section we aim to proving Theorem \ref{th1}.
\subsection{Construction of initial data}

We choose
$$f_{n}(x)=\Gamma_n\sum^{n}_{k=1}\frac1k2^{-2k}(x_3\pa_{3}\check{\varphi})(2^kx)\quad \text{with}\quad \Gamma_n=\frac{1}{\ln\ln n},$$
where $\check{\varphi}$ is the inverse Fourier transform of $\varphi$ and  $\varphi:\R^3\mapsto [0,1]$  is a radial, non-negative, smooth function satisfying $\mathrm{supp}\ {\varphi}(\xi) \subset\{\xi\in\mathbb{R}^3: 4/3\leq|\xi|\leq 3/2\}$.
Obviously, $f_n$ is a real scalar function.

Let $a=\pa_{3}\check{\varphi}$ and $a_k(x)=(x_3a)(2^kx)$, a trivial computation gives that
\bbal
&\mathrm{supp} \ \widehat{a_k}(\xi)\subset  \left\{\xi\in\R^3: \ \frac{4}{3}2^{k}\leq |\xi|\leq \frac{3}{2}2^{k}\right\},\quad k\in[1,n].
\end{align*}

\begin{definition}[{\bf Initial Data}]\label{DEF} We construct the initial data $u^n_0$ whose components are given by
\begin{align}\label{u0-de}
u^n_0=(\pa_2f_n,-\pa_1f_n,0).
\end{align}
\end{definition}

\begin{remark} Obviously, it is not difficult to verify that $\div\, u^n_0=0$ and initial data $u^n_0$ is real-valued Schwarz functions. We would like to emphasize that
$f_{n}$ can not be taken by
$$ f_{n}(x)=\Gamma_n\sum^{n}_{k=1}\frac1k2^{-2k}(x^2_3\check{\varphi})(2^kx) \quad  or \quad  f_{n}(x)=\Gamma_n\sum^{n}_{k=1}\frac1k2^{-2k}(\pa^2_3\check{\varphi})(2^kx).$$
\end{remark}

\subsection{Estimation of initial data}
\begin{proposition}\label{pro4-1}
For any $p\in[1,\infty]$. Let $u^n_0$ be defined by \eqref{u0-de}. Then there exists $C,c>0$ independent of $n$ such that
\bal
&\|u^n_0\|_{B^{1+\frac{3}{p}}_{p,1}(\R^3)}\leq C\Gamma_n\ln n, \label{u1}\\
&\|u^n_0\|_{W^{1,\infty}(\R^3)}\leq C\Gamma_n,\label{u2}\\
& \|\na\mathbf{Q}(e_3\times u^n_0)\|_{L^\infty(\R^3)}\geq c\Gamma_n\ln n.
\end{align}
\end{proposition}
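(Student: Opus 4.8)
\textbf{Proof proposal for Proposition \ref{pro4-1}.}

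The plan is to treat the three estimates separately, exploiting that each component of $u^n_0$ is, up to the harmless factor $2^{-2k}$, a rescaled Schwartz function whose Fourier transform lives in the dyadic annulus $\mathcal{C}_k$. First I would handle \eqref{u1}: since $u^n_0 = (\pa_2 f_n, -\pa_1 f_n, 0)$ and $f_n$ is the sum $\Gamma_n\sum_{k=1}^n \frac1k 2^{-2k} a_k(x)$ with $\widehat{a_k}$ supported in $\{\tfrac43 2^k \le |\xi|\le \tfrac32 2^k\}$, the dyadic block $\Delta_j u^n_0$ picks out only the $k=j$ term (plus at most the neighbors, by the $\tilde\Delta$ convention, but the spectral supports are disjoint across distinct $k$, so in fact only $k=j$). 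Thus $\|u^n_0\|_{B^{1+3/p}_{p,1}} \approx \Gamma_n\sum_{k=1}^n \frac1k 2^{-2k} 2^{(1+3/p)k}\|\nabla a_k\|_{L^p}$, and by Bernstein the gradient costs one factor $2^k$ while the scaling $a_k(x)=(x_3 a)(2^k x)$ gives $\|(x_3 a)(2^k\cdot)\|_{L^p}\approx 2^{-3k/p}\|x_3 a\|_{L^p}$ and the extra $x_3$ contributes another $2^{-k}$ after one more rescaling; bookkeeping these powers of $2$, every $2^k$ cancels and one is left with $\Gamma_n\sum_{k=1}^n \frac1k \approx \Gamma_n\ln n$, exactly as in the computation of $\|f_n\|_{B^{1+d/p}_{p,1}}$ in the proof of Proposition \ref{pro0}. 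The factor $2^{-2k}$ in the definition of $f_n$ is precisely chosen so that after taking one spatial derivative (for $u^n_0$) and later another (for $\nabla u^n_0$) the powers of $2$ balance.

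For \eqref{u2}, I would split $\|u^n_0\|_{W^{1,\infty}} = \|u^n_0\|_{L^\infty}+\|\nabla u^n_0\|_{L^\infty}$. Both $u^n_0$ and $\nabla u^n_0$ are, up to constants, sums of the form $\Gamma_n \sum_{k=1}^n \frac1k 2^{-2k} 2^{mk}(g)(2^k x)$ where $g$ is a fixed Schwartz function (built from $x_3 a$ and its derivatives) and $m\le 2$; in fact the design of $f_n$ makes the net power $2^{(m-2)k}$ nonpositive, so $\|u^n_0\|_{L^\infty}\le \Gamma_n\sum_{k=1}^n \frac1k 2^{-k}\|g\|_{L^\infty}\le C\Gamma_n$ and likewise for $\nabla u^n_0$, using only $\|g(2^k\cdot)\|_{L^\infty}=\|g\|_{L^\infty}$ and the geometric sum. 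Here one does \emph{not} need the vanishing-at-zero trick of Proposition \ref{pro0}, because the surplus negative powers of $2$ already make the series summable; that is why $f_n$ is multiplied by $2^{-2k}$ rather than adapted via $\chi(0)=0$.

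The heart of the matter — and the step I expect to be the main obstacle — is the lower bound $\|\nabla\mathbf{Q}(e_3\times u^n_0)\|_{L^\infty}\ge c\Gamma_n\ln n$. The strategy is to compute $e_3\times u^n_0 = (\pa_1 f_n, \pa_2 f_n, 0)$ (using $e_3\times(\pa_2 f_n,-\pa_1 f_n,0) = (\pa_1 f_n,\pa_2 f_n,0)$), apply $\mathbf{Q} = -(-\Delta)^{-1}\nabla\div$, and observe that $\div(\pa_1 f_n,\pa_2 f_n,0) = (\pa_1^2+\pa_2^2) f_n$, so $\mathbf{Q}(e_3\times u^n_0) = -(-\Delta)^{-1}\nabla(\pa_1^2+\pa_2^2)f_n$; then take a well-chosen derivative — say $\pa_3$ of the third component, whose symbol is $\xi_3^2(\xi_1^2+\xi_2^2)/|\xi|^2$ — to reduce everything to a zero-order Fourier multiplier $m(\xi)=\xi_3^2(\xi_1^2+\xi_2^2)/|\xi|^2$ acting on $f_n$. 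Writing this multiplier applied to $f_n$ as $\Gamma_n\sum_{k=1}^n \frac1k 2^{-2k}\gamma(2^k x)$ with $\widehat\gamma(\xi) = m(\xi)\,\widehat{(x_3 a)}(\xi) = m(\xi)\,(\i\pa_{\xi_3})\varphi(\xi)$ (up to constants; here $\widehat{x_3 a}(\xi)=\i\pa_{\xi_3}\widehat a(\xi)=\i\pa_{\xi_3}(\i\xi_3\varphi)=-\pa_{\xi_3}(\xi_3\varphi)$), one evaluates at $x=0$: $|\,\pa_3\text{-component of }\nabla\mathbf{Q}(e_3\times u^n_0)\,(0)| = \Gamma_n\big(\sum_{k=1}^n \frac1k 2^{-2k}2^{2k}\big)|\gamma(0)|$, where the $2^{2k}$ comes from the two derivatives (one from $\mathbf{Q}\cdot$ being zero-order after one $\nabla$ lands, the other — wait, more carefully: $\nabla\mathbf{Q}$ is order $0$, and $e_3\times u^n_0$ already contains one derivative of $f_n$, so altogether $m(\xi)$ is order $0$ acting on $\pa f_n$, i.e. on something scaling like $2^{-k}\gamma(2^k\cdot)$ — so the net is $\Gamma_n\sum \frac1k 2^{-2k}2^{2k}=\Gamma_n\sum\frac1k$, with the surplus $2^{2k}$ from the \emph{two} derivatives hidden inside $u^n_0=\nabla^\perp f_n$ and the outer $\nabla$). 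The remaining task is to show $\gamma(0)=\int_{\R^3}\widehat\gamma(\xi)\,\dd\xi\ne 0$: this is an explicit integral of $m(\xi)\,(-\pa_{\xi_3}(\xi_3\varphi))$ over the annulus, which after integrating by parts in $\xi_3$ becomes $\int \pa_{\xi_3}\!\big(\xi_3\,m(\xi)/\text{(something)}\big)\varphi\,\dd\xi$ — a manifestly sign-definite quantity once one exploits the radial symmetry of $\varphi$ and the positivity of $m$, exactly parallel to Cases 1 and 2 in the proof of Proposition \ref{pro00}. The obstacle is purely computational: keeping the exact powers of $2$ straight through the chain $f_n \mapsto u^n_0=\nabla^\perp f_n \mapsto e_3\times u^n_0 \mapsto \mathbf{Q}(\cdot) \mapsto \nabla(\cdot)$, and verifying that the chosen scalar multiplier $m$ has nonvanishing integral against $\varphi$ — the latter is where the Remark's warning (that $f_n$ cannot be built from $x_3^2\check\varphi$ or $\pa_3^2\check\varphi$) becomes essential, since for those choices the analogue of $\gamma(0)$ would vanish and the lower bound would collapse.
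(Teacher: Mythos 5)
Your treatment of \eqref{u1} and of the lower bound for $\|\na\mathbf{Q}(e_3\times u^n_0)\|_{L^\infty}$ follows essentially the same route as the paper: disjoint dyadic spectral supports for the Besov norm, and for the lower bound the reduction of the $(3,3)$-entry of $\na\mathbf{Q}(e_3\times u^n_0)=\na^2(-\De)^{-1}(\pa_1^2+\pa_2^2)f_n$ to a degree-two homogeneous multiplier acting on $f_n$, evaluation at $x=0$, and an integration by parts in $\xi_3$ showing $\gamma(0)\neq 0$. Your power-of-$2$ bookkeeping there, though hesitantly stated, lands on the correct cancellation $2^{-2k}\cdot 2^{2k}=1$.

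There is, however, a genuine gap in your argument for \eqref{u2}, precisely in the bound $\|\na u^n_0\|_{L^\infty}\leq C\Gamma_n$. You claim that ``the surplus negative powers of $2$ already make the series summable'' and that ``one does not need the vanishing-at-zero trick.'' This is false for the second derivatives. Each entry of $\na u^n_0$ is a sum of terms $\pa_j\pa_i f_n=\Gamma_n\sum_{k=1}^n\frac1k\,2^{-2k}\,2^{2k}\chi(2^kx)=\Gamma_n\sum_{k=1}^n\frac1k\chi(2^kx)$ with $\chi$ a fixed Schwartz function: the net power of $2$ is exactly zero, so the crude bound $\sum_{k=1}^n\frac1k\|\chi\|_{L^\infty}$ gives only $C\Gamma_n\ln n$, which is useless here (indeed, if the triangle inequality gave $C\Gamma_n$ at this stage, the same reasoning applied to the multiplier computation would contradict your own lower bound, which also has net power $2^0$). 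The correct argument is exactly the mechanism of Proposition \ref{pro0}: one must check that $\chi(0)=0$ (for $j=1,2$ this is immediate from the factor $x_3$; for $j=3$ one gets $\chi=\pa_i a+x_3\pa_3\pa_i a$ with $a=\pa_3\check\varphi$, and $\pa_i a(0)=-c\int\xi_i\xi_3\varphi\,\dd\xi=0$ by the radial symmetry of $\varphi$), and then split into $|x|\geq1$, where Schwartz decay gives geometric decay in $k$, and $|x|\leq1$, where one picks $k_0$ with $2^{k_0}|x|\approx1$ and uses $|\chi(2^kx)-\chi(0)|\les 2^{k-k_0}$ for $k\leq k_0$ together with $|\chi(2^kx)|\les (1+2^{k-k_0})^{-1}$ for $k\geq k_0$. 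Both sums are then bounded by a constant uniformly in $n$ and $x$. This cancellation at the origin is the whole point of the construction (it is what separates the $O(\Gamma_n)$ upper bound for $\na u^n_0$ from the $\Gamma_n\ln n$ lower bound for $\na\mathbf{Q}(e_3\times u^n_0)$, whose multiplier has $\gamma(0)\neq0$), so omitting it is not a cosmetic shortcut but a breakdown of the proof.
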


\begin{proof}
For $i=1,2$, we have
\bal\label{u3}
\pa_if_n(x)=\Gamma_n\sum^{n}_{k=1}\frac1k2^{-k}(x_3\pa_ia)(2^{k}x).
\end{align}
Using the definition of Besov space and the fact that $a$ is a Schwartz function, yields
\bbal
\|u^n_0\|_{B^{1+\frac{3}{p}}_{p,1}(\R^3)}&\leq  C\Gamma_n\sum_{j=1}^{n}\frac{1}{j}2^{\fr{3}{p}j}\f(\f\|(x_3\pa_1a)(2^{j}x)\g\|_{L^p(\R^3)}+\f\|(x_3\pa_2a)(2^{j}x)\g\|_{L^p(\R^3)}\g)\\
&\leq C \Gamma_n\sum_{j=1}^{n}\frac{1}{j}\approx \Gamma_n\ln n.
\end{align*}
This gives \eqref{u1}.  Obvious, from \eqref{u3}, one has for $i=1,2$
\bbal
\|\pa_if_n\|_{L^\infty(\R^3)}&\leq  C\Gamma_n,
\end{align*}
and
$$\pa_j\pa_if_n(x)=\Gamma_n\sum^{n}_{k=1}\frac1k\chi(2^{k}x),$$
where
$$\chi(x)=
\begin{cases}
x_3\pa_j\pa_ia(x), \quad& j=1,2,\\
\pa_ia(x)+x_3\pa_3\pa_ia(x), \quad& j=3.
\end{cases}$$
We should mention that the key fact $\chi(0)=0$.
Following the previous step, we divide into two cases.

{\bf Case when $|x|\geq 1$.} We have for $i=1,2$ and $j=1,2,3$
\bbal
\frac{1}{\Gamma_n}|\pa_j\pa_if_n(x)|&\leq \sum^{n}_{k=1}\frac1k|\chi(2^{k}x)|\leq C.
\end{align*}

{\bf Case when $|x|\in(0,1]$.} Picking large $k_0$ such that $1\leq 2^{k_0}|x|\leq 2$, then we have for $i=1,2$ and $j=1,2,3$
\bbal
\frac{1}{\Gamma_n}|\pa_j\pa_if_n(x)|&\leq \sum^{k_0}_{k=1}|\chi(2^{k}x)-\chi(0)|+ \sum^{n}_{k=k_0}|\chi(2^{k}x)|\leq C.
\end{align*}
Combining the above, we have for $j=1,2,3$
\bbal
\|\nabla u^n_{0}\|_{L^\infty}&\leq \|\pa_j\pa_1f_n\|_{L^\infty}+\|\pa_j\pa_2f_n\|_{L^\infty}
\leq C\Gamma_n.
\end{align*}
Notice that $e_3 \times u^n_0=(\pa_1f_n, \pa_2f_n, 0)$, then $\div (e_3\times u^n_0)=(\pa^2_1+\pa^2_2) f_n$ and thus one has
\bbal
\nabla^2 \div (e_3\times u^n_0)
&=\f(\pa_i\pa_j(\pa^2_1+\pa^2_2)f_n\g)_{1\leq i,j\leq3}.
\end{align*}
We focus on the $(3,3)$-component. Due to the fact $a=\pa_{x_3}\check{\varphi}$, then
\bbal
\pa_3^2(-\Delta)^{-1}\div (e_3\times u^n_0)
&=\mathcal{F}^{-1}\f(\frac{\xi^2_3(\xi^2_1+\xi^2_2)}{|\xi|^2}\widehat{f_n}\g)=\Gamma_n\sum^{n}_{k=1}\frac1k\gamma(2^kx),
\end{align*}
 where
$$\hat{\gamma}(\xi)=\frac{(\xi^2_1+\xi^2_2)\xi^2_3\pa_{\xi_3}[\xi_3\varphi(\xi)]}{|\xi|^2}.$$
Notice that $\hat{\gamma}(\xi)=\hat{\gamma}(|\xi|)$, then
\bbal
\gamma(0)&=\int_{\R^3}\frac{(\xi^2_1+\xi^2_2)\xi^2_3\pa_{\xi_3}[\xi_3\varphi(\xi)]}{|\xi|^2}\dd \xi
\\&=\int_{\R^2}(\xi^2_1+\xi^2_2)\int^{+\infty}_{-\infty}\frac{\xi^2_3\pa_{\xi_3}[\xi_3\varphi(\xi)]}{|\xi|^2}\dd \xi_3\dd \xi_h,
\end{align*}
and
\bbal
\quad \int^{+\infty}_{-\infty}\frac{\xi^2_3\pa_{\xi_3}[\xi_3\varphi(\xi)]}{|\xi|^2}\dd \xi_3&=-\int^{+\infty}_{-\infty}\xi_3\varphi(\xi)\pa_{\xi_3}\f(\frac{\xi^2_3}{|\xi|^2}\g)\dd \xi_3
\\&=-2\int^{+\infty}_{-\infty}\varphi(\xi)\frac{(\xi^2_1+\xi_2^2)\xi^2_3}{|\xi|^4}\dd \xi_3,
\end{align*}
we have
\bbal
-\gamma(0)&=2\int_{\R^3}\frac{(\xi^2_1+\xi^2_2)^2\xi^4_3}{|\xi|^4}\varphi(\xi)\dd \xi
=2\int_{\fr43\leq|\xi|\leq\fr32}\frac{(\xi^2_1+\xi^2_2)^2\xi^4_3}{|\xi|^4}\varphi(\xi)\dd \xi=:c_0>0.
\end{align*}
Combining the above, we deduce that
\bbal
\|\na\mathbf{Q}(e_3\times u_0)\|_{L^\infty}&\geq \|\big(\na\mathbf{Q}(e_3\times u_0)\big)_{3,3}\|_{L^\infty}
\\&=\|\pa_3^2(-\Delta)^{-1}\div (e_3\times u_0)\|_{L^\infty}
\\&\geq \f|\pa_3^2(-\Delta)^{-1}\div (e_3\times u_0)\g|(x=0)\\
&=c_0\Gamma_n\sum^{n}_{k=1}\frac1k\approx c_0\Gamma_n\ln n.
\end{align*}
This completes the proof of Proposition \ref{pro4-1}.
\end{proof}

\subsection{Estimation of lower bound for $\|\nabla u\|_{L^\infty}$}\label{sub4-3}
First we recall the following well-posedness result for the 3D rotating Euler equations.
\begin{theorem}[\cite{AKL}]\label{j-th1}
For all $\Omega \in \mathbb{R}$. Let $u_0 \in B_{2,1}^{5 / 2}(\mathbb{R}^3)$ with $\div u_0=0$. There exists $T=T(\left\|u_0\right\|_{B_{2,1}^{5 / 2}})>0$ such that the 3D rotating Euler equations \eqref{CE} has a unique solution $u$ satisfying $$u \in C\left([0, T] ; B_{2,1}^{5 / 2}(\mathbb{R}^3)\right) \cap C^1\left([0, T] ; B_{2,1}^{3 / 2}(\mathbb{R}^3)\right).$$ Moreover, it holds
\bbal
\|u\|_{L^\infty_T\f(B^{5/2}_{2,1}\g)}\leq C\|u_0\|_{B^{5/2}_{2,1}}.
\end{align*}
\end{theorem}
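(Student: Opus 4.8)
The plan is to prove Theorem~\ref{j-th1} by the now-classical scheme for quasilinear transport equations in critical Besov spaces (Chae; Bahouri--Chemin--Danchin): a priori estimates built from the transport structure together with commutator and product laws, an iteration scheme for existence, and a stability estimate in a lower norm for uniqueness. I work with the pressure-eliminated form \eqref{ce1}, $\pa_t u+u\cdot\na u=-\Omega\mathbf{P}(e_3\times u)+\mathbf{Q}(u\cdot\na u)$, and first record that the divergence-free constraint is propagated: taking the divergence and using $\div\mathbf{P}=0$ together with $\div\mathbf{Q}(u\cdot\na u)=\div(u\cdot\na u)$ gives $\pa_t(\div u)=0$. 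The philosophy is that, after the Leray projection, the Coriolis term is a \emph{skew, zeroth-order} perturbation that costs essentially nothing (in particular $T$ will not depend on $\Omega$); the real work is the critical transport estimate and the pressure term.

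\textbf{A priori estimate.} Let $u$ be smooth. Apply $\Delta_j$ to the equation and pair in $L^2$ with $\Delta_j u$. Since $\div u=0$ the transport part reduces to the commutator, $\int \Delta_j(u\cdot\na u)\cdot\Delta_j u=\int[\Delta_j,u\cdot\na]u\cdot\Delta_j u$; and since $\Delta_j u$ is still divergence free, $\int\Delta_j\mathbf{P}(e_3\times u)\cdot\Delta_j u=\int(e_3\times\Delta_j u)\cdot\Delta_j u=0$, so the Coriolis term contributes nothing at all. Hence $\tfrac{d}{dt}\|\Delta_j u\|_{L^2}\le \|[\Delta_j,u\cdot\na]u\|_{L^2}+\|\Delta_j\mathbf{Q}(u\cdot\na u)\|_{L^2}$. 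Multiplying by $2^{5j/2}$ and summing in $j$, the sharp (endpoint $r=1$) commutator estimate gives $\sum_j 2^{5j/2}\|[\Delta_j,u\cdot\na]u\|_{L^2}\lesssim\|\na u\|_{L^\infty}\|u\|_{B^{5/2}_{2,1}}\lesssim\|u\|_{B^{5/2}_{2,1}}^2$ (using $B^{5/2}_{2,1}\hookrightarrow W^{1,\infty}$). For the projected nonlinearity one uses the incompressibility identity $\div(u\cdot\na u)=\sum_{i,j}\pa_iu_j\,\pa_ju_i$, so that $\mathbf{Q}(u\cdot\na u)=-(-\Delta)^{-1}\na\big(\sum_{i,j}\pa_iu_j\pa_ju_i\big)$ is of net order $-1$ in a quadratic expression in $\na u$; the standard pressure estimate (product law Lemma~\ref{lem21}, applicable since $3/2>0$ and $p=2\in(1,\infty)$) then yields $\|\mathbf{Q}(u\cdot\na u)\|_{B^{5/2}_{2,1}}\lesssim\|\na u\|_{L^\infty}\|u\|_{B^{5/2}_{2,1}}\lesssim\|u\|_{B^{5/2}_{2,1}}^2$. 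Altogether $\tfrac{d}{dt}\|u\|_{B^{5/2}_{2,1}}\le C\|u\|_{B^{5/2}_{2,1}}^2$, which gives a time $T=\big(2C\|u_0\|_{B^{5/2}_{2,1}}\big)^{-1}$, depending only on $\|u_0\|_{B^{5/2}_{2,1}}$, on which $\|u\|_{L^\infty_T B^{5/2}_{2,1}}\le 2\|u_0\|_{B^{5/2}_{2,1}}$ --- exactly the stated bound.

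\textbf{Existence and regularity class.} Construct approximate solutions $u^{m}$ by solving, at each step, the \emph{linear} transport problem $\pa_t u^{m+1}+u^m\cdot\na u^{m+1}=-\Omega\mathbf{P}(e_3\times u^m)+\mathbf{Q}(u^m\cdot\na u^m)$ with $u^{m+1}(0)=u_0$, the incompressibility being enforced by the usual elliptic determination of the pressure (equivalently, a Friedrichs frequency truncation). The estimate above, run on the scheme, gives a bound in $C([0,T];B^{5/2}_{2,1})$ uniform in $m$, with $T$ and the constant as above. One then checks that $\{u^m\}$ is Cauchy in the weaker norm $C([0,T];B^{3/2}_{2,1})$ (the customary loss of one derivative in the convergence, which is harmless), passes to the limit to obtain $u\in L^\infty([0,T];B^{5/2}_{2,1})\cap C([0,T];B^{3/2}_{2,1})$ solving \eqref{ce1}, with $\|u\|_{L^\infty_T B^{5/2}_{2,1}}\le 2\|u_0\|_{B^{5/2}_{2,1}}$ by lower semicontinuity of the norm. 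Because the third index $r=1$ is finite, the time-continuity property of the transport equation upgrades this to $u\in C([0,T];B^{5/2}_{2,1})$; then $\pa_t u=-u\cdot\na u-\Omega\mathbf{P}(e_3\times u)+\mathbf{Q}(u\cdot\na u)\in C([0,T];B^{3/2}_{2,1})$ (the product $u\cdot\na u$ lies in the algebra $B^{3/2}_{2,1}$, and the $\mathbf{P},\mathbf{Q}$ terms even in $B^{5/2}_{2,1}$), so $u\in C^1([0,T];B^{3/2}_{2,1})$.

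\textbf{Uniqueness, and the main difficulty.} If $u,v$ are two such solutions with the same data, then $w=u-v$ is divergence free, $w(0)=0$, and $\pa_t w+u\cdot\na w=-w\cdot\na v-\Omega\mathbf{P}(e_3\times w)+\mathbf{Q}(u\cdot\na u-v\cdot\na v)$. Pairing with $w$ in $L^2$: the transport term drops ($\div u=0$), the Coriolis term drops ($\int(e_3\times w)\cdot w=0$), and the projected nonlinearity drops because $\mathbf{Q}w=0$ for the divergence-free field $w$, whence $\int\mathbf{Q}(\cdot)\cdot w=\int(\cdot)\cdot\mathbf{Q}w=0$; hence $\tfrac{d}{dt}\|w\|_{L^2}^2\le 2\|\na v\|_{L^\infty}\|w\|_{L^2}^2$ and Gronwall forces $w\equiv0$. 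The delicate point of the whole argument is the a priori estimate at the \emph{critical} regularity $B^{5/2}_{2,1}$ (that is $B^{d/p+1}_{p,r}$ with $d=3,p=2,r=1$): there is no room for any loss of derivative, so one must use the sharp $r=1$ form of the transport/commutator estimate and of the pressure estimate (the latter relying on $\div(u\cdot\na u)=\pa_iu_j\pa_ju_i$), together with the strong time-continuity that the endpoint index $r=1$ provides; the Coriolis term, by contrast, is inert in all of these. Everything else --- propagation of $\div u=0$ through the scheme, the Cauchy-in-$B^{3/2}_{2,1}$ step, the endpoint continuity --- is routine bookkeeping once the critical estimates are in hand.
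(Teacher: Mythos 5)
The paper does not prove this statement: Theorem \ref{j-th1} is quoted from the reference \cite{AKL} and used as a black box, so there is no in-paper proof to compare against. Your argument is the standard local well-posedness scheme for quasilinear transport equations at the critical index $B^{d/p+1}_{p,1}$ (as in Bahouri--Chemin--Danchin and in the cited source), and it is essentially correct. The two load-bearing observations are exactly the right ones: first, $\int\Delta_j\mathbf{P}(e_3\times u)\cdot\Delta_j u\,\dd x=0$ because $\mathbf{P}$ and $\Delta_j$ commute as Fourier multipliers, $\mathbf{P}\Delta_j u=\Delta_j u$ for divergence-free $u$, $\mathbf{P}$ is self-adjoint, and $(e_3\times v)\cdot v=0$ pointwise --- so the Coriolis term is inert in every energy estimate and $T$ depends only on $\|u_0\|_{B^{5/2}_{2,1}}$, not on $\Omega$; second, the endpoint $r=1$ commutator estimate together with $\div(u\cdot\na u)=\sum_{i,j}\pa_iu_j\,\pa_ju_i$ closes the estimate without loss of derivatives. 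Two details are glossed over but are routine: the bound $\|\mathbf{Q}(u\cdot\na u)\|_{B^{5/2}_{2,1}}\lesssim\|\na u\|_{L^\infty}\|u\|_{B^{5/2}_{2,1}}$ needs the low-frequency block $\Delta_{-1}$ of $(-\Delta)^{-1}\na$ handled separately (e.g.\ via the $L^2$-boundedness of the zeroth-order multiplier $(-\Delta)^{-1}\na\div$), and the borderline case $s=1+d/p$ of the commutator lemma produces the additional term $\|\na u\|_{L^\infty}\|\na u\|_{B^{3/2}_{2,1}}$, which for the self-advection $v=f=u$ is again controlled by $\|u\|^2_{B^{5/2}_{2,1}}$. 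Neither point affects the conclusion, and your uniqueness argument (the $\mathbf{Q}$ and Coriolis terms are annihilated when pairing with the divergence-free difference $w$ in $L^2$) is sound.
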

\begin{remark}
Since the concrete values of the coefficients $\Omega\neq 0$ have no impact on the result, from now on we set $\Omega=1$ for simplicity.
\end{remark}
By Theorem \ref{j-th1}, we can deduce that there exists $T_n \approx\frac1{\Gamma_n\ln n}$ such that $u^n(t,x)\in C([0,T_n];B^{5/2}_{2,1})$ be the solution of with initial data $u^n_0$. Moreover,  we also have
\bbal
\|u^n\|_{B^{5/2}_{2,1}}\leq C \Gamma_n\ln n.
\end{align*}
Now let's return to the original system
\bal\label{cm}
\partial_t u+u \cdot \nabla u=-\mathbf{P}(e_3\times u)+\mathbf{Q}(u\cdot \nabla u)=:f.
\end{align}
First we differentiate the system and set $\mathbf{U}=\nabla u$, then rewrite \eqref{cm} for the gradient of $u$
$$
\partial_t \mathbf{U}+u \cdot \nabla \mathbf{U}=g-\nabla \mathbf{P}(e_3\times u),
$$
where
$$g:=\nabla \mathbf{Q}(u\cdot \nabla u)-\na u:\na u.$$
Next we write the equation along the flow of $u$. Let $\psi$ be the Lagrangian flow-map associated to $u$, i.e., given a Lipchitz-solution $u$ of Eq.$\eqref{CE}$, we may solve the following ODE to find the flow map $\psi$ induced by $u$:
\begin{align}\label{ode}
\quad\begin{cases}
\frac{\dd}{\dd t}\psi(t,x)=u(t,\psi(t,x)),\\
\psi(0,x)=x.
\end{cases}
\end{align}
Then we get
\bbal
\partial_t(\mathbf{U} (t,\psi(t,x)))=-\f(\na\mathbf{P} (e_3\times u)\g)(t,\psi(t,x)) +g(t,\psi(t,x)).
\end{align*}
Hence, we have
\bal\label{hh}
\mathbf{U} (t,\psi(t,x))&=\mathbf{U}_0-\int^t_0\f(\na\mathbf{P} (e_3\times u)\g)(\tau,\psi(\tau,x))\dd \tau
 +\int^t_0g(\tau,\psi(\tau,x))\dd \tau.
\end{align}
Furthermore, we decompose it
\bal
\mathbf{U} (t,\psi(t,x))&=\mathbf{U}_0+t\na\mathbf{Q} (e_3\times u_0)-\int^t_0\na(e_3\times u)(\tau,\psi(\tau,x))\dd \tau+\int^t_0g(\tau,\psi(\tau,x))\dd \tau\no\\
& \quad +\int^t_0\f(\na\mathbf{Q} (e_3\times u)\g)(\tau,\psi(\tau,x))-\na\mathbf{Q}(e_3\times u_0)(x)\dd \tau.
\end{align}
Letting $$\mathbf{V}:=\na \mathbf{Q}(e_3\times u)=\na^2(-\Delta)^{-1}{\rm div}(e_3\times u),$$ then from \eqref{cm} we have
\bal\label{hhyy}
\pa_t\mathbf{V}+u\cd \na \mathbf{V}&=\mathbf{I}_1+\mathbf{I}_2,
\end{align}
where
\bbal
&\mathbf{I}_1=\na^2(-\Delta)^{-1}{\rm div}\f(e_3\times f\g),\\
&\mathbf{I}_2=\f[\na^2(-\Delta)^{-1},\,u\cd \na\g] {\rm div}(e_3\times u).
\end{align*}
From \eqref{hhyy}, one has
\bal
\mathbf{V} (t,\psi(t,x))&=\mathbf{V}_0(x)+\int^t_0(\mathbf{I}_1+\mathbf{I}_2)(\tau,\psi(\tau,x))\dd \tau.
\end{align}
From which, we have
\bbal
\|\mathbf{V} (t,\psi(t,x))-\mathbf{V}_0(x)\|_{L^\infty}&\leq \int^t_0\|\mathbf{I}_1\|_{L^\infty}+\|\mathbf{I}_2\|_{L^\infty}\dd \tau\leq C\int^t_0\|\mathbf{I}_1\|_{B^{\frac32}_{2,1}}+\|\mathbf{I}_2\|_{B^{\frac32}_{2,1}}\dd \tau.
\end{align*}
Recall that $f=-\mathbf{P}(e_3\times u)+\na(-\Delta)^{-1}(\nabla u:\nabla u)$, then using Lemma \ref{lem21}, we have
\bbal
&\|\mathbf{I}_1\|_{B^{\frac32}_{2,1}}\leq C\|f\|_{B^{\frac52}_{2,1}}\leq C\|u\|_{B^{\frac52}_{2,1}}+C\|\na u\|_{L^\infty}\|u\|_{B^{\frac52}_{2,1}}.
\end{align*}
By Lemma \ref{lem23}, we have
\bbal
&\|\mathbf{I}_2\|_{B^{\frac32}_{2,1}}\leq C\|\na u\|_{L^\infty\cap B^{\frac32}_{2,1}}\|u\|_{B^{\frac52}_{2,1}},
\end{align*}
which implies that
\bbal
\|\mathbf{V} (t,\psi(t,x))-\mathbf{V}_0(x)\|_{L^\infty}&\leq Ct\f(\|u\|_{B^{\frac52}_{2,1}}+\|\na u\|_{L^\infty\cap B^{\frac32}_{2,1}}\|u\|_{B^{\frac52}_{2,1}}\g).
\end{align*}
Taking similar argument as above, we also have
\bbal
\|\na (e_3\times u) (t,\psi(t,x))-\na (e_3\times u_0)(x)\|_{L^\infty}&\leq Ct\f(\|u\|_{B^{\frac52}_{2,1}}+\|\na u\|_{L^\infty\cap B^{\frac32}_{2,1}}\|u\|_{B^{\frac52}_{2,1}}\g).
\end{align*}
From \eqref{hh}, we deduce that
\bal\label{hh1}
\|\mathbf{U}(t,x)\|_{L^\infty}&=\|\mathbf{U} (t,\psi(t,x))\|_{L^\infty}
\geq t\|\na\mathbf{Q} (e_3\times u_0)(x)\|_{L^\infty}-\|\mathbf{U}_0(x)\|_{L^\infty}\no\\
&-t\|\na (e_3\times u_0)(x)\|_{L^\infty}-\int^t_0\|g(x)\|_{L^\infty}\dd \tau-\int^t_0\|\mathbf{V}(\tau,\psi(\tau,x))-\mathbf{V}_0(x)\|_{L^\infty}\dd \tau\no\\
&-\int^t_0\|\na (e_3\times u)(\tau,\psi(\tau,x))-\na (e_3\times u_0)(x)\|_{L^\infty}\dd \tau.
\end{align}
Notice that $g=\nabla^2(-\Delta)^{-1}(\na u:\na u)-\na u:\na u$, then using Lemma \ref{lem21}, we obtain
\bbal
\|g\|_{L^\infty}\leq C\|\na u:\na u\|_{B^{\frac32}_{2,1}}\leq C\|\na u\|_{L^\infty}\|u\|_{B^{\frac52}_{2,1}}.
\end{align*}
Thus we obtain from \eqref{hh1} that
\bbal
\|\na u\|_{L^\infty_T(L^\infty)}&=\|\mathbf{U}\|_{L^\infty_T(L^\infty)}
\geq T\|\na\mathbf{Q} (e_3\times u_0)\|_{L^\infty}-\|\na u_0\|_{L^\infty}-CT\|\na u_0\|_{L^\infty}\\
&\quad-CT\|\na u\|_{L^\infty_T(L^\infty)}\|u_0\|_{B^{5/2}_{2,1}}-CT^2\f(\|u_0\|_{B^{5/2}_{2,1}}+\|u_0\|^2_{B^{5/2}_{2,1}}\g).
\end{align*}
Now we will choose $u^n_0$ such that
\bbal
&\|u^n_0\|_{B^{{5}/{2}}_{2,1}}\leq C\Gamma_n\ln n, \\
&\|u^n_0\|_{W^{1,\infty}}\leq C\Gamma_n,\\
&\|\na\mathbf{Q} (e_3\times u^n_0)\|_{L^\infty}\geq c\Gamma_n\ln n,
\end{align*}
with $n$ a constant to be chosen and $C$ is a universal constant.

We then see that for $n\gg 1$
\bbal
\|\na u\|_{L^\infty_T(L^\infty)}&\geq c T \Gamma_n\ln n-C(1+T) \Gamma_n\\
&\quad-CT\Gamma_n\ln n\|\na u\|_{L^\infty_T(L^\infty)}-CT^2\Gamma_n\ln n -C\f(T\Gamma_n\ln n\g)^2.
\end{align*}
Let $T_n=\frac{\ep}{\ln n\Gamma_n}$, from the above we deduce that
$$
\|\na u\|_{L^\infty_{T_n}(L^\infty)}\geq c\varepsilon-C\Gamma_n-CT_n-C\ep^2-C\ep\|\na u\|_{L^\infty_{T_n}(L^\infty)},
$$
for some constant $\ep>0$ independent of $n$.

Picking $\ep$ small enough and $n$ large enough, we can show that
$$\|\na u\|_{L^\infty_{T_n}(L^\infty)}\geq c\varepsilon.$$
This completes the proof of Theorem \ref{th1}.

\section{Proof of Theorem \ref{th2}}\label{sec5}
In this section we aim to proving Theorem \ref{th2}.

\subsection{Construction of initial data}
First we introduce a real scalar function $f_{n}$ be of the form
\bbal
&f_{n}(x)=\Gamma_n\sum^{n}_{k=1}\frac1k2^{-2k}(x_1x_2\pa_1\pa_2\check{\varphi})(2^kx)\quad \text{with}\quad \Gamma_n=\frac{1}{\ln\ln n},
\end{align*}
where $\check{\varphi}$ is the inverse Fourier transform of $\varphi$ and  $\varphi:\R^2\mapsto [0,1]$  is a radial, non-negative, smooth function satisfying $\mathrm{supp}\ {\varphi}(\xi) \subset\{\xi\in\mathbb{R}^2: 4/3\leq|\xi|\leq 3/2\}$.
Obviously, $f_n$ is a real scalar function.
Let $b_k(x)=(x_1x_2\pa_1\pa_2\check{\varphi})(2^kx)$, a trivial computation gives that
\bbal
&\mathrm{supp} \ \widehat{b_k}(\xi)\subset  \left\{\xi\in\R^2: \ \frac{4}{3}2^{k}\leq |\xi|\leq \frac{3}{2}2^{k}\right\},\quad k\in[1,n].
\end{align*}

\begin{definition}[{\bf Initial Data}]\label{DEF1} We construct the initial data $u^n_0$ whose components are given by
\begin{align}\label{u0-de1}
u^n_0=(\pa_2f_n,-\pa_1f_n).
\end{align}
\end{definition}
Obviously, one has $\div\, u^n_0=0$. We would like to emphasize that initial data $u^n_0$ is real-valued Schwarz functions.

\subsection{Estimation of initial data}
\begin{proposition}\label{pro5-1}
For any $p\in[1,\infty]$. Let $u^n_0$ be defined by \eqref{u0-de1}. Then there exists a positive constant $C$ independent of $n$ such that
\bal
&\|u^n_0\|_{B^{1+\frac{2}{p}}_{p,1}(\R^2)}\leq C\Gamma_n\ln n, \label{uu1}\\
&\|u^n_0\|_{W^{1,\infty}(\R^2)}\leq C \Gamma_n.\label{uu2}\\
& \|\na\mathbf{P} (u_{0,1}^n,0)\|_{L^\infty(\R^2)}\geq c_0\Gamma_n\ln n.
\end{align}
\end{proposition}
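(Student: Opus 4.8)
\textbf{Proof proposal for Proposition \ref{pro5-1}.}

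The plan is to mirror the structure of Proposition \ref{pro4-1}, which handled the analogous estimates in the 3D rotating case, adapting the computations to $d=2$ and to the new scalar potential $f_n$ built from $x_1x_2\pa_1\pa_2\check{\varphi}$. First I would record that, writing $b=\pa_1\pa_2\check{\varphi}$, for $i=1,2$ one has $\pa_if_n(x)=\Gamma_n\sum_{k=1}^n\frac1k2^{-k}(x_1x_2\pa_ib)(2^kx)$, and since the Fourier support of each summand lies in the dyadic annulus $\{\frac43 2^k\le|\xi|\le\frac32 2^k\}$ the Littlewood–Paley blocks decouple exactly, so the $B^{1+2/p}_{p,1}$ norm is comparable to $\Gamma_n\sum_{j=1}^n\frac1j 2^{(2/p)j}\|(x_1x_2\pa_ib)(2^jx)\|_{L^p}\approx\Gamma_n\sum_{j=1}^n\frac1j\approx\Gamma_n\ln n$, using that $b$ is Schwartz; this gives \eqref{uu1}. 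For \eqref{uu2}, the $L^\infty$ bound $\|\pa_if_n\|_{L^\infty}\le C\Gamma_n$ is immediate from Schwartz decay of $x_1x_2\pa_ib$, and for $\|\nabla^2 f_n\|_{L^\infty}$ I would compute $\pa_j\pa_if_n(x)=\Gamma_n\sum_{k=1}^n\frac1k\chi(2^kx)$ where $\chi$ is one of $x_1x_2\pa_j\pa_ib$, $x_2\pa_ib+x_1x_2\pa_1\pa_ib$, or $x_1\pa_ib+x_1x_2\pa_2\pa_ib$ depending on which derivatives fall where — the crucial point being that in every case $\chi(0)=0$, which is exactly why $f_n$ carries the factor $x_1x_2$. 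Then the two-case argument of Proposition \ref{pro0} (splitting $|x|\ge1$ versus $|x|\in(0,1]$, and on the latter using $|\chi(2^kx)-\chi(0)|\lesssim 2^k|x|$ for $k\le k_0$ with $2^{k_0}|x|\approx1$, and Schwartz decay for $k\ge k_0$) gives $\|\pa_j\pa_if_n\|_{L^\infty}\le C\Gamma_n$, hence $\|\nabla u^n_0\|_{L^\infty}\le C\Gamma_n$ and \eqref{uu2}.

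For the lower bound on $\|\na\mathbf{P}(u^n_{0,1},0)\|_{L^\infty}$, the idea is to isolate one scalar component of the matrix $\na\mathbf{P}(u^n_{0,1},0)$, reduce it to a Fourier multiplier applied to $f_n$, and evaluate at $x=0$. Since $u^n_{0,1}=\pa_2f_n$ and $\mathbf{P}_{ij}=\delta_{ij}+R_iR_j$, the vector $\mathbf{P}(u^n_{0,1},0)$ has first component $(\mathrm{Id}+R_1^2)\pa_2f_n$ and second component $R_1R_2\pa_2f_n$; I would take a convenient entry of its gradient — say $\pa_1$ of the second component, i.e. $\pa_1R_1R_2\pa_2f_n=\mathcal{F}^{-1}\bigl(\frac{\xi_1^2\xi_2^2}{|\xi|^2}\widehat{f_n}\bigr)$ up to signs — which, because $\widehat{f_n}$ is (up to the $2^{-2k}$ rescalings) of the form $\Gamma_n\sum\frac1k 2^{-2k}\widehat{b_k}$ with $\widehat{b}(\xi)=-\pa_1\pa_2[\xi_1\xi_2\varphi]$ or similar, collapses after the $x_1x_2\leftrightarrow\pa_1\pa_2$ duality to $\Gamma_n\sum_{k=1}^n\frac1k\gamma(2^kx)$ for an explicit radial $\hat\gamma$. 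Because $\hat\gamma$ is radial and supported in the annulus $\frac43\le|\xi|\le\frac32$, integrating by parts to move the $\pa_{\xi}$ off $\varphi$ produces $\gamma(0)=\int\hat\gamma(\xi)\,d\xi$ equal to a strictly positive multiple of $\int_{\frac43\le|\xi|\le\frac32}\frac{(\xi_1^2\xi_2^2)^2}{|\xi|^4}\varphi(\xi)\,d\xi=:c_0>0$ (the sign and positivity coming from $\varphi\ge0$ and an even number of derivatives, exactly as in the $(3,3)$-component computation of Proposition \ref{pro4-1}). Then $\|\na\mathbf{P}(u^n_{0,1},0)\|_{L^\infty}\ge|\gamma(0)|\,\Gamma_n\sum_{k=1}^n\frac1k\approx c_0\Gamma_n\ln n$.

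The main obstacle I anticipate is \emph{bookkeeping} rather than any deep difficulty: getting the precise multiplier symbol right after pushing the $x_1x_2$ factors through the Fourier transform into $\xi$-derivatives of $\xi_1\xi_2\varphi(\xi)$, carefully tracking signs through the integrations by parts, and verifying that the boundary terms vanish (they do, since $\varphi$ is compactly supported away from the origin). One must also check that the chosen entry of $\na\mathbf{P}(u^n_{0,1},0)$ does not accidentally vanish at $x=0$ for parity reasons — choosing $\pa_1$ of the $R_1R_2\pa_2$-component yields the symbol $\propto\xi_1^2\xi_2^2/|\xi|^2$ times the transform of $f_n$, which after the duality is manifestly sign-definite, so this is safe. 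A secondary point is that $f_n$ must be genuinely Schwartz and real-valued (so that $u^n_0\in\mathcal{S}(\R^2)$ as required downstream), which follows since $\varphi$ is smooth, compactly supported, radial and non-negative, so $\check\varphi$ is Schwartz and real, and multiplication by the polynomial $x_1x_2$ and differentiation preserve the Schwartz class.
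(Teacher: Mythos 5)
Your proposal follows essentially the same route as the paper's proof: the dyadic decoupling of the Littlewood--Paley blocks for \eqref{uu1}, the two-case argument ($|x|\geq 1$ versus $|x|\in(0,1]$) hinging on $\chi(0)=0$ for \eqref{uu2}, and, for the lower bound, evaluating at $x=0$ the entry of $\na\mathbf{P}(u^n_{0,1},0)$ with symbol $\xi_1^2\xi_2^2/|\xi|^2$ applied to $\widehat{f_n}$, whose profile $\gamma$ satisfies $\gamma(0)\neq0$ after integrating by parts against $\xi_1\xi_2\varphi(\xi)$ --- this is exactly the paper's $(1,2)$-component computation. One bookkeeping caveat: your list of possible $\chi$'s for $\pa_j\pa_i f_n$ omits the bare term $b=\pa_1\pa_2\check{\varphi}$ that appears in the mixed case, since $\pa_1\pa_2(x_1x_2b)=b+x_1\pa_1 b+x_2\pa_2 b+x_1x_2\pa_1\pa_2 b$; the conclusion $\chi(0)=0$ still holds, but only because $b(0)=-(2\pi)^{-1}\int\xi_1\xi_2\varphi(\xi)\,\dd\xi=0$ by the radiality of $\varphi$, a fact your listed forms (all carrying explicit factors of $x_1$ or $x_2$) do not by themselves capture.
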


\begin{proof}
For $i,j=1,2$, we have
\bbal
&\pa_if_n=\Gamma_n\sum^{n}_{k=1}\frac1k2^{-k}[\pa_i(x_1x_2\pa_1\pa_2\check{\varphi})](2^{k}x), \\
&\pa_j\pa_if_n=\Gamma_n\sum^{n}_{k=1}\frac1k[\pa_j\pa_i(x_1x_2\pa_1\pa_2\check{\varphi})](2^{k}x).
\end{align*}
Similarly, we have
\bbal
\|u^n_0\|_{B^{1+\frac{2}{p}}_{p,1}(\R^2)}&\leq  C\Gamma_n\sum_{j=1}^{n}\frac{1}{j}.
\end{align*}
This gives \eqref{uu1}.  Obvious, one has
\bbal
\|\pa_if_n\|_{L^\infty(\R^2)}+\|\pa_ig_n\|_{L^\infty(\R^2)}&\leq  C\Gamma_n.
\end{align*}
We should mention that the key fact $[\pa_j\pa_i(x_1x_2\pa_1\pa_2\check{\varphi})](0)=0$, taking similar argument as above, we obtain for $j=1,2$
\bbal
\|\nabla u^n_{0}\|_{L^\infty}&\leq \|\pa_j\pa_1f_n\|_{L^\infty}+\|\pa_j\pa_2f_n\|_{L^\infty}
\leq C\Gamma_n.
\end{align*}
Notice that $\na\mathbf{P} (u_1^n(0),0)=\nabla^2\pa_2\pa_1(-\Delta)^{-1}f_n$,
we focus on the $(1,2)$-component. Due to the fact $b=\pa_{1}\pa_{2}\check{\varphi}$, then
\bbal
\pa^2_1\pa^2_2(-\Delta)^{-1}g_n
&=\mathcal{F}^{-1}\f(\frac{\xi^2_1\xi^2_2}{|\xi|^2}\widehat{f_n}\g)=\Gamma_n\sum^{n}_{k=1}\frac1k\lambda(2^kx),
\end{align*}
where
$$\hat{\lambda}(\xi)=\frac{\xi^2_1\xi^2_2}{|\xi|^2}\pa_{\xi_1}\pa_{\xi_2}[\xi_1\xi_2\varphi(\xi)].$$
Notice that $\hat{\lambda}(\xi)=\hat{\lambda}(|\xi|)$ and
\bbal
-\lambda(0)&=-\int_{\R^2}\frac{\xi^2_1\xi^2_2}{|\xi|^2}\pa_{\xi_1}\pa_{\xi_2}[\xi_1\xi_2\varphi(\xi)]\dd \xi
\\&=\int_{\R^2}\pa_{\xi_1}\pa_{\xi_2}\f(\frac{\xi^2_1\xi^2_2}{|\xi|^2}\g)\xi_1\xi_2\varphi(\xi)\dd \xi
\\&=4\int_{\R^2}\frac{\xi^4_1\xi^4_2}{|\xi|^6}\varphi(\xi)\dd \xi:=c_0>0.
\end{align*}
Thus
\bbal
\|\na^2(-\Delta)^{-1}\pa_1\pa_2f_n\|_{L^\infty}&\geq \|\pa_2\pa_1(-\Delta)^{-1}f_n\|_{L^\infty}
\\&\geq\f|\pa_2\pa_1(-\Delta)^{-1}f_n\g|(x=0)\\
&=c_0\Gamma_n\sum^{n}_{k=1}\frac1k\approx c_0\Gamma_n\ln n.
\end{align*}
This completes the proof of Proposition \ref{pro5-1}.
\end{proof}
\subsection{Completion of Theorem \ref{th2}}\label{subsec5-3}
By following the above procedure of subsection \ref{sub4-3}, we obtain
\bbal
\|\na u\|_{L^\infty_T(L^\infty)}&
\geq T\|\na\mathbf{P} (u_{0,1}^n,0)\|_{L^\infty}-\|\na u_0\|_{L^\infty}-CT\|\na u_0\|_{L^\infty}\\
&\quad-CT\|\na u\|_{L^\infty_T(L^\infty)}\|u_0\|_{B^{3/2}_{4,1}}-CT^2\f(\|u_0\|_{B^{3/2}_{4,1}}+\|u_0\|^2_{B^{3/2}_{4,1}}\g).
\end{align*}
With the aid of Proposition \ref{pro5-1}, we can prove Theorem \ref{th2}. Since the process is standard, we skip the details here.

\section{Further Applications}\label{sec6}
We would like to mention that our result holds for more general transport equations of the type:
\begin{equation*}
\begin{cases}
\pa_tf+u\cdot\na f=\mathcal{R}f, \\
f(0,x)=f_0(x),
\end{cases}
\end{equation*}
where $u$ is a Lipschitz continuous, divergence-free, velocity field and $\mathcal{R}$ is a linear singular integral operator.

{\bf Instability of perturbations for the SQG equation.}\,
We consider the surface quasi-geostrophic equation
\begin{align}\label{qg0}
\begin{cases}
\partial_t \theta+u \cdot \nabla \theta=0, &\quad (t,x)\in \R^+\times\R^2,\\
u=\nabla^{\perp}(-\Delta)^{-\frac{1}{2}} \theta,\\
\theta(0,x)=\theta_0(x), &\quad x\in \R^2.
\end{cases}
\end{align}
This system originally appeared as a model in atmospheric science but is also seen as a good model for the 3D Euler equation since the quantity $\nabla^{\perp} \theta$ obeys a system very similar to the 3D vorticity equation, for more details see \cite{Co-adv,Co-cmp,E-ARMA}.
Recall that, $\theta(x, t)=G(x_2)$ is a stationary solution to \eqref{qg0}, then we rewrite the following perturbation equation:
\begin{align}\label{qg}
\begin{cases}
\pa_t \theta+u\cdot \nabla \theta=G'(x_2)\mathcal{R}_1\theta, &\quad (t,x)\in \R^+\times\R^2,\\
\mathrm{div\,} u=0,\\
\theta(0,x)=\theta_0(x), &\quad x\in \R^2,
\end{cases}
\end{align}
where $\mathcal{R}_1$ represents the first component of the Riesz transform.

\begin{theorem}\label{th4} Let $G(x_2)$ be any horizontal stratified state satisfying $G'(x_2)\in W^{2,\infty}(\R)$ and $G'(0)\neq 0$ (e.g., $G'(x_2)=\pm x_2$). Eq.\eqref{qg} is mild ill-posed on $W^{1,\infty}(\R^2)$ which implies the instability of perturbations for the SQG equation \eqref{qg0}.
\end{theorem}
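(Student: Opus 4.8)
\textbf{Proof proposal for Theorem \ref{th4}.}

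The plan is to mimic the scheme used for Theorems \ref{th1} and \ref{th2}, with the Riesz forcing $-\mathbf{P}(e_3\times u)$ or $-\mathbf{P}(u_1,0)$ replaced by the scalar singular term $G'(x_2)\mathcal{R}_1\theta$. First I would fix a local well-posedness statement: since $G'\in W^{2,\infty}$, multiplication by $G'$ is bounded on $B^{s}_{2,1}(\R^2)$ for $s\le 2$, and $\mathcal{R}_1$ is bounded on every $B^s_{2,1}$; together with Lemma \ref{lem21} this lets one run the standard energy/commutator argument for the transport equation \eqref{qg} and obtain, for initial data $\theta_0^n$ of size $\|\theta_0^n\|_{B^{2}_{2,1}}\lesssim \Gamma_n\ln n$, a unique solution $\theta^n\in C([0,T_n];B^{2}_{2,1})$ on a time interval $T_n\approx (\Gamma_n\ln n)^{-1}$, with $\|\theta^n\|_{L^\infty_{T_n}B^{2}_{2,1}}\lesssim \Gamma_n\ln n$. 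The embedding $B^{2}_{2,1}(\R^2)\hookrightarrow W^{1,\infty}(\R^2)$ is the continuous inclusion $Y\hookrightarrow X$ required by the definition of mild ill-posedness.

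Next I would choose the initial datum. Following the construction in Section \ref{sec5}, set
\[
\theta_0^n(x)=\Gamma_n\sum_{k=1}^n\frac1k 2^{-k}(x_2\,\partial_2\check{\varphi})(2^k x),\qquad \Gamma_n=\frac{1}{\ln\ln n},
\]
with $\varphi$ as in Proposition \ref{pro00}. The same computations as in Propositions \ref{pro0}, \ref{pro00}, \ref{pro4-1} give $\|\theta_0^n\|_{B^{2}_{2,1}}\lesssim \Gamma_n\ln n$, $\|\theta_0^n\|_{W^{1,\infty}}\lesssim \Gamma_n$, and — this is the point where $G'(0)\neq0$ and $\chi(0)=0$ enter — a lower bound
\[
\|\nabla\bigl(G'(x_2)\mathcal{R}_1\theta_0^n\bigr)\|_{L^\infty}\ \ge\ c\,|G'(0)|\,\Gamma_n\ln n,
\]
obtained by evaluating the principal term $G'(0)\,\partial_2\mathcal{R}_1\theta_0^n$ at $x=0$; since $\widehat{\partial_2\mathcal{R}_1}=\xi_2^2/|\xi|$ is radial one gets, exactly as in Case 1 of Proposition \ref{pro00}, a nonzero constant times $\Gamma_n\sum_k 1/k\approx\Gamma_n\ln n$ (the extra terms coming from $\partial_2$ hitting $G'$ are lower order, bounded by $\|G''\|_{L^\infty}\|\mathcal{R}_1\theta_0^n\|_{L^\infty}\lesssim\Gamma_n$).

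Then I would run the Lagrangian argument of Subsection \ref{sub4-3}. Differentiating \eqref{qg}, writing $\mathbf{U}=\nabla\theta$ along the flow $\psi$ of $u$, one gets
\[
\mathbf{U}(t,\psi(t,x))=\mathbf{U}_0(x)+t\,\nabla\bigl(G'\mathcal{R}_1\theta_0\bigr)(x)+\text{(error terms)},
\]
where the error terms are time integrals of $\nabla(G'\mathcal{R}_1\theta)-\nabla(G'\mathcal{R}_1\theta_0)$ and of the commutator $[\,\nabla(G'\mathcal{R}_1),u\cdot\nabla\,]\theta$ (note $\nabla\theta:\nabla\theta$-type quadratic terms are absent here since the equation is linear in $\theta$, which actually simplifies matters relative to Theorem \ref{th1}). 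Each error is controlled in $L^\infty$ via the $B^{3/2}_{2,1}\hookrightarrow L^\infty$ embedding, Lemma \ref{lem21} for the product with $G'\in W^{2,\infty}$, and Lemma \ref{lem23} (the Riesz commutator estimate, applied after writing $G'\mathcal{R}_1$ and absorbing $G'$) to bound the commutator by $C\|\nabla u\|_{L^\infty\cap B^{1}_{2,\infty}}\|\theta\|_{B^{2}_{2,1}}$; all of these are $\lesssim \Gamma_n\ln n$, so the accumulated error over $[0,T_n]$ with $T_n=\varepsilon/(\Gamma_n\ln n)$ is $O(\varepsilon^2+\Gamma_n+\varepsilon\|\nabla u\|_{L^\infty_{T_n}L^\infty})$. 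Taking $L^\infty$ norms, choosing $\varepsilon$ small and $n$ large, the main term $t\,\|\nabla(G'\mathcal{R}_1\theta_0^n)\|_{L^\infty}\gtrsim \varepsilon$ dominates, yielding $\|\nabla\theta^n(t_n)\|_{L^\infty}\ge c\varepsilon=:\delta$ with $t_n=T_n\to0$, while $\|\theta_0^n\|_{W^{1,\infty}}\to0$. The main obstacle is the commutator term: one must check that Lemma \ref{lem23} still applies with the variable coefficient $G'(x_2)$ in front of $\mathcal{R}_1$, i.e. that $[\,G'\partial_j\mathcal{R}_1,u\cdot\nabla\,]\theta = G'[\,\partial_j\mathcal{R}_1,u\cdot\nabla\,]\theta + (u\cdot\nabla G')\,\partial_j\mathcal{R}_1\theta$ and both pieces are bounded in $B^{3/2}_{2,1}$ by $C(G',\|\nabla u\|_{L^\infty\cap B^{1}_{2,\infty}})\|\theta\|_{B^{2}_{2,1}}$, which uses $G'\in W^{2,\infty}$; the rest is routine and parallel to Sections \ref{sec4}--\ref{sec5}.
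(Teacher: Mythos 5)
Your overall strategy is the right one: the paper offers no actual proof of Theorem \ref{th4} (Section \ref{sec6} merely asserts that the scheme of Sections \ref{sec4}--\ref{sec5} transfers), and transplanting the Lagrangian argument of Subsection \ref{sub4-3} with a well-chosen lacunary datum is exactly what is intended. However, the key step of your proposal --- the lower bound $\|\nabla(G'\mathcal{R}_1\theta_0^n)\|_{L^\infty}\gtrsim \Gamma_n\ln n$ --- fails as written. The symbol of $\pa_2\mathcal{R}_1$ is $i\xi_2\cdot\xi_1/|\xi|$, not $\xi_2^2/|\xi|$ as you claim. With your datum built from $\chi=x_2\pa_2\check{\varphi}$, whose Fourier transform is $-\pa_{\xi_2}(\xi_2\varphi)=-(\varphi+\xi_2\pa_{\xi_2}\varphi)$ and hence \emph{even} in $\xi_1$, the value at the origin of the corresponding $\gamma$ is
\[
\gamma(0)=c\int_{\R^2}\frac{\xi_1\xi_2}{|\xi|}\,\pa_{\xi_2}\bigl(\xi_2\varphi(\xi)\bigr)\,\dd\xi=0
\]
by oddness in $\xi_1$. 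Since moreover $\gamma(0)=0$ forces, by the very argument of Proposition \ref{pro0}, that $\|\pa_2\mathcal{R}_1\theta_0^n\|_{L^\infty}\les\Gamma_n$, your ``principal term'' is genuinely small, not large: the $\pa_2$-component of $\nabla(G'\mathcal{R}_1\theta_0^n)$ is $O(\Gamma_n)$ and the logarithmic divergence never appears. This is precisely the parity obstruction the authors warn about in the Remark following Definition \ref{DEF}.

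The construction is salvageable with the correct pairing of component and building block. Keep a datum of the same type but look at the $\pa_1$-component: since $G'$ depends only on $x_2$, one has $\pa_1\bigl(G'(x_2)\mathcal{R}_1\theta_0^n\bigr)=G'(x_2)\,\pa_1\mathcal{R}_1\theta_0^n$ (so the $G''$ term disappears altogether), and the symbol of $\pa_1\mathcal{R}_1$ is $i\xi_1^2/|\xi|$, which is even in both variables. With $\chi=x_2\pa_2\check{\varphi}$ (or $x_1\pa_1\check{\varphi}$) one computes, after an integration by parts as in Proposition \ref{pro4-1},
\[
\Bigl|\int_{\R^2}\frac{\xi_1^2}{|\xi|}\,\pa_{\xi_2}\bigl(\xi_2\varphi(\xi)\bigr)\,\dd\xi\Bigr|
=\int_{\R^2}\frac{\xi_1^2\xi_2^2}{|\xi|^3}\,\varphi(\xi)\,\dd\xi>0,
\]
which restores $|G'(0)|\,|\pa_1\mathcal{R}_1\theta_0^n(0)|\gtrsim\Gamma_n\ln n$. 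Two further corrections: (i) the equation \eqref{qg} is \emph{not} linear in $\theta$, since $u=\nabla^{\perp}(-\Delta)^{-1/2}\theta$; upon differentiating, the quadratic term $\nabla u\cdot\nabla\theta$ reappears and must be absorbed exactly as $\nabla u:\nabla u$ is in Section \ref{sec4} (this causes no harm, but it is not ``absent''); (ii) the commutator you need is $[G'(x_2)\mathcal{R}_1,\,u\cdot\nabla]$, and your decomposition into $G'[\mathcal{R}_1,u\cdot\nabla]\theta+(u\cdot\nabla G')\mathcal{R}_1\theta$ together with $G'\in W^{2,\infty}$ and Lemmas \ref{lem21}, \ref{lem23} does handle it, so that part of your outline is sound.
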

{\bf Instability of perturbations for the 2D IPM equation.}\,
We consider the 2D incompressible porous medium (IPM) system, which consists of an active scalar equation with a velocity field $u(x, t)$ satisfying the momentum equation given by Darcy's law
\begin{align}\label{ipm0}
\begin{cases}
\partial_t \rho+(u \cdot \nabla) \rho=0, &\quad (t,x)\in \R^+\times\R^2,\\
u=-\nabla P-(0, \rho), \\
\div\, u=0, \\
\rho(0,x)=\rho_0(x), &\quad x\in \R^2,
\end{cases}
\end{align}
where $\rho(x,t)$ represents the density transported
by the fluid, $u(x,t)$ is the incompressible velocity, and $P(x,t)$ is the pressure. For further explanations on the physical background
and applications of this model, we refer to \cite{Castro,Ing,XY} and references therein. Additionally, a notable
distinction is that the Biot-Savart law of the 2D IPM equation contains a horizontal
partial derivatives $\pa_{x_1}$. This feature leads to the existence of relatively simple steady state
 solutions of the form $\rho_s(x)=G(x_2)$. Let us denote $\Theta(x,t):=\rho(x,t) -G(x_2)$, where  $\rho(x,t)$ is the solution of system \eqref{ipm0}
, then $\Theta(x,t)$ satisfies the following perturbation equation:
\begin{align}\label{ipm}
\begin{cases}
\pa_t \Theta+u\cdot \nabla \Theta=-G'(x_2)\mathcal{R}^2_1\Theta, &\quad (t,x)\in \R^+\times\R^2,\\
\mathrm{div\,} u=0,\\
\Theta(0,x)=\Theta_0(x), &\quad x\in \R^2.
\end{cases}
\end{align}

\begin{theorem}\label{th5} Let $G(x_2)$ be any horizontal stratified state satisfying $G'(x_2)\in W^{2,\infty}(\R)$ and $G'(0)\neq 0$ (e.g., $G'(x_2)=\pm x_2$). Eq.\eqref{ipm} is mild ill-posed on $W^{1,\infty}(\R^2)$ which implies the instability of perturbations for the 2D IPM equation \eqref{ipm0}.
\end{theorem}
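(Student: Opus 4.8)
The plan is to adapt the scheme of Sections~\ref{sec4}--\ref{sec5} to the variable‑coefficient Riesz forcing. First I would read \eqref{ipm} as a Riesz‑forced transport equation of the type discussed at the beginning of this section: the velocity $u$ is recovered from $\Theta$ by the divergence‑free Biot--Savart/Darcy law, hence is given by zeroth order Riesz operators applied to $\Theta$ (so $\|u\|_{B^s_{p,r}}\les\|\Theta\|_{B^s_{p,r}}$ and $\|\na u\|_{L^\infty}\les\|u\|_{B^{3/2}_{4,1}}\les\|\Theta\|_{B^{3/2}_{4,1}}$), while the right‑hand side $-G'(x_2)\mathcal{R}^2_1\Theta$ is a zeroth order operator with coefficient $G'\in W^{2,\infty}$. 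I would fix the working space $Y=B^{3/2}_{4,1}(\R^2)$, which satisfies $Y\hookrightarrow W^{1,\infty}(\R^2)=:X$, and record a local well‑posedness statement in $Y$: the product law of Lemma~\ref{lem21} together with the standard bound for multiplication by a $W^{2,\infty}$ symbol (note $3/2<2$) give a unique solution $\Theta\in C([0,T];Y)$ with $T\gtrsim\|\Theta_0\|_Y^{-1}$ and $\|\Theta\|_{L^\infty_TY}\les\|\Theta_0\|_Y$; the same argument propagates the subcritical norm $B^{1/2}_{4,1}(\R^2)$.

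For the data I would take, with $\Gamma_n=1/\ln\ln n$,
$$\Theta^n_0(x)=\Gamma_n\sum_{k=1}^n\frac1k\,2^{-k}\,\Psi_0(2^kx),$$
where $\Psi_0\in\mathcal{S}(\R^2)$ is real‑valued, $\widehat{\Psi_0}$ is supported in $\{4/3\le|\xi|\le3/2\}$, $\na\Psi_0(0)=0$, and $\widehat{\Psi_0}$ carries a nonzero third angular mode; concretely one may take $\widehat{\Psi_0}(\xi)=\mathrm{i}\,(\xi_1^3-3\xi_1\xi_2^2)\,\varphi(\xi)$ with $\varphi$ a radial bump supported in $\{4/3\le|\xi|\le3/2\}$. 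As in Section~\ref{sec5} one then checks $\|\Theta^n_0\|_{W^{1,\infty}}\les\Gamma_n$ (the $2^{-k}$ weight and $\na\Psi_0(0)=0$ make $\na\Theta^n_0(x)=\Gamma_n\sum_k\frac1k(\na\Psi_0)(2^kx)$ bounded by $\Gamma_n$ uniformly in $x$), $\|\Theta^n_0\|_{B^{3/2}_{4,1}}\approx\Gamma_n\ln n$ and $\|\Theta^n_0\|_{B^{1/2}_{4,1}}\les\Gamma_n$. The key point is that $\na\mathcal{R}^2_1$ is homogeneous of degree $1$, which exactly cancels the $2^{-k}$, so $\na\mathcal{R}^2_1\Theta^n_0=\Gamma_n\sum_k\frac1k(\na\mathcal{R}^2_1\Psi_0)(2^kx)$; since $\int_{\R^2}\xi_m\,\mathrm{Im}\,\widehat{\Psi_0}\,\dd\xi=0$ for $m=1,2$ (this encodes $\na\Psi_0(0)=0$) while $(\pa_1\mathcal{R}^2_1\Psi_0)(0)=c\int_{\R^2}\tfrac{\xi_1^3}{|\xi|^2}\,\mathrm{Im}\,\widehat{\Psi_0}\,\dd\xi\neq0$, and $G'(0)\neq0$, this yields
$$\big\|G'(x_2)\,\na\mathcal{R}^2_1\Theta^n_0\big\|_{L^\infty}\ \ge\ |G'(0)|\,\big|(\na\mathcal{R}^2_1\Theta^n_0)(0)\big|\ \ge\ c\,\Gamma_n\ln n .$$

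Next I would differentiate \eqref{ipm}, write the equation for $\na\Theta$ along the flow $\psi$ of $u$, and isolate the term linear in $t$:
$$\na\Theta(t,\psi(t,x))=\na\Theta_0(x)-t\,G'(x_2)\,\na\mathcal{R}^2_1\Theta_0(x)+\mathcal{E}_1+\mathcal{E}_2+\mathcal{E}_3 ,$$
with $\mathcal{E}_1=\int_0^t\!\big[G'\na\mathcal{R}^2_1\Theta_0-(G'\na\mathcal{R}^2_1\Theta)(\tau,\psi)\big]\dd\tau$, $\mathcal{E}_2=-\int_0^t(G''\mathcal{R}^2_1\Theta\,e_2)(\tau,\psi)\dd\tau$ and $\mathcal{E}_3=-\int_0^t(\na u\cd\na\Theta)(\tau,\psi)\dd\tau$. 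As in Section~\ref{sec4}, $\mathcal{E}_1$ is controlled by writing the transport equation $\pa_t\mathbf V+u\cd\na\mathbf V=-G'[\na\mathcal{R}^2_1,\,u\cd\na]\Theta+\big(\text{zeroth order terms}\big)$ for $\mathbf V:=G'(x_2)\na\mathcal{R}^2_1\Theta$ and bounding the commutator by a variant of Lemma~\ref{lem23}, which gives $\|\mathcal{E}_1\|_{L^\infty}\les t^2\,\Gamma_n\ln n\,(1+\Gamma_n\ln n)$; $\mathcal{E}_2$ by $\|\mathcal{R}^2_1\Theta\|_{L^\infty}\les\|\Theta\|_{B^{1/2}_{4,1}}\les\Gamma_n$, so $\|\mathcal{E}_2\|_{L^\infty}\les t\,\Gamma_n$; and $\|\mathcal{E}_3\|_{L^\infty}\les t\,\|\na u\|_{L^\infty_tL^\infty}\|\na\Theta\|_{L^\infty_tL^\infty}\les t\,\Gamma_n\ln n\,\|\na\Theta\|_{L^\infty_tL^\infty}$. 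Evaluating at $x=0$ and taking $t_n=\ep/(\Gamma_n\ln n)$ (which lies in $[0,T]$ for $\ep$ small and tends to $0$ as $n\to\infty$), this produces
$$\|\na\Theta(t_n)\|_{L^\infty}\ \ge\ c\ep-C\Gamma_n-C\ep^2-C\ep/\ln n-C\ep\,\|\na\Theta\|_{L^\infty_{t_n}L^\infty},$$
so that, absorbing the last term and then choosing $\ep$ small and $n$ large, $\|\na\Theta(t_n)\|_{L^\infty}\ge\delta>0$, while $\|\Theta^n_0\|_{W^{1,\infty}}\to0$ and $t_n\to0$. This is precisely the mild ill‑posedness of \eqref{ipm} in $W^{1,\infty}(\R^2)$, i.e. the instability in $W^{1,\infty}$ of the stratified steady state $\rho_s=G(x_2)$ of \eqref{ipm0}.

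The hard part will be the variable‑coefficient commutator estimate behind $\mathcal{E}_1$: one has to extend Lemma~\ref{lem23} both to the first order operator $\na\mathcal{R}^2_1$ and, above all, to the non‑decaying coefficient $G'\in W^{2,\infty}$, which forces one to handle the paraproducts of $[G'\na\mathcal{R}^2_1,\,u\cd\na]$ and of the multiplication by $G'$ directly in the critical space $B^{3/2}_{4,1}(\R^2)$ — this is exactly where the hypothesis $G'\in W^{2,\infty}$ (with $3/2<2$) enters, and the same mechanism governs the regularity requirement in Theorem~\ref{th4}. A secondary point requiring care is the choice of $\Psi_0$: unlike the SQG forcing $\mathcal{R}_1\theta$, whose leading operator $\na\mathcal{R}_1$ has an even Fourier symbol and can be excited by a real even profile, here $\na\mathcal{R}^2_1$ has an odd symbol, so $\widehat{\Psi_0}$ must carry an imaginary (odd) higher angular mode if $(\pa_1\mathcal{R}^2_1\Psi_0)(0)\neq0$ is to be compatible with the smallness requirement $\na\Psi_0(0)=0$.
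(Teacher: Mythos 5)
Your proposal is correct and follows essentially the same route the paper intends: the paper gives no separate proof of Theorem~\ref{th5}, presenting it as a direct application of the template of Sections~\ref{sec4}--\ref{sec5}, and your write-up is exactly that template adapted to \eqref{ipm} (Lagrangian formulation, isolation of the term $t\,G'(x_2)\na\mathcal{R}_1^2\Theta_0$, commutator control of the errors via Lemma~\ref{lem23} in a Besov space embedded in $W^{1,\infty}$, and the choice $t_n=\ep/(\Gamma_n\ln n)$). Your two adaptations — the odd, purely imaginary third angular harmonic in $\widehat{\Psi_0}$ needed because the symbol of $\pa_1\mathcal{R}_1^2$ is odd while $\na\Psi_0(0)=0$ must hold, and the use of $G'\in W^{2,\infty}$ with $3/2<2$ to make multiplication by $G'$ bounded on $B^{3/2}_{4,1}$ — are precisely the points the paper leaves implicit, and your choices check out.
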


\section*{Declarations}
\noindent\textbf{Data Availability}\\
No data was used for the research described in the article.

\vspace*{1em}
\noindent\textbf{Conflict of interest}\\
The authors declare that they have no conflict of interest.
\vspace*{1em}

\noindent\textbf{Funding}\\
Li is supported by National Natural Science Foundation of China (12161004), Innovative High end Talent Project in Ganpo Talent Program (gpyc20240069), Training Program for Academic and Technical Leaders of Major Disciplines in Ganpo Juncai Support Program (20232BCJ23009), Jiangxi Provincial Natural Science Foundation (20252BAC210004).

\end{document}